\newcommand{\Z}{\mathbb{Z}}
\newcommand{\D}{\mathbb{D}}
\newcommand{\A}{\mathbb{A}}
\newcommand{\G}{\mathbb{G}}
\newcommand{\logg}{\mathsf{Log}}
\newcommand{\dimm}{\mathsf{dim}}
\newcommand{\spec}{\mathsf{Spec}\,}
\newcommand{\hcr}{\widetilde{\mathscr{H}(x)}}
\theoremstyle{definition}
\newtheorem{example}[equation]{Example}
\newtheorem{situation}[equation]{}
\newtheorem{remark}[equation]{Remark}
\newtheorem{definition}[equation]{Definition}
\theoremstyle{plain}
\newtheorem{theorem}[equation]{Theorem}
\newtheorem*{theorem*}{Theorem}
\newtheorem{proposition}[equation]{Proposition}
\newtheorem{lemma}[equation]{Lemma}
\newtheorem{conjecture}[equation]{Conjecture}
\newcommand{\defeq}{:=}
\numberwithin{equation}{section}
\setlist[enumerate]{labelindent=\parindent,leftmargin=*,topsep=0.4ex,itemsep=0.1ex}
\setlist[itemize]{labelindent=\parindent,leftmargin=*,topsep=0.4ex,itemsep=-1ex,partopsep=1ex,parsep=1ex}
\setlist[enumerate,1]{labelindent=\parindent, leftmargin=*,label=\textup{(\arabic*)},ref=\textup{\arabic*}}
\renewcommand{\setminus}{\mathbin{\rule[0.2em]{0.67em}{0.12em}}}%
\providecommand{\keywords}[1]{\textbf{{Keywords---}} #1}
\renewcommand{\mathbb}{\mathbf}
\newsavebox{\@brx}
\newcommand{\llangle}[1][]{\savebox{\@brx}{\(\m@th{#1\langle}\)}%
  \mathopen{\copy\@brx\kern-0.5\wd\@brx\usebox{\@brx}}}
\newcommand{\rrangle}[1][]{\savebox{\@brx}{\(\m@th{#1\rangle}\)}%
  \mathclose{\copy\@brx\kern-0.5\wd\@brx\usebox{\@brx}}}
\title{On the Structure of the Complement of Skeleton}
\begin{document}
\subjclass[2000]{Primary 14G22}
\keywords{Berkovich space, Semi-stable reduction, Essential Skeleton}
\author{Morgan Brown}
\address{Department of Mathematics, University of Miami, Ungar Bldg, 1365 Memorial Dr \#515, Coral Gables, FL 33146, USA.}
\email{mvbrown@math.miami.edu}


\author{$\text{JIACHANG 
 XU}^{\dagger}$}\thanks{$\dagger$: Corresponding author}
\address{Institute of Mathematics and Informatics, Bulgarian Academy of Sciences, Bulgaria, Sofia 1113, Acad. G. Bonchev Str., Bl. 8}
\email{jiachangxu823@gmail.com}

\author{Muyuan Zhang}
\address{Westlake Institute for Advanced Study; Institute for Theoretical Sciences, Westlake University, Hangzhou 310030, China}
\email{mzhang73@outlook.com}

\begin{abstract}
We study the higher dimensional geometry of Berkovich spaces using virtual open disks, which are given by fibration of relative dimension $1$. Inspired by birational geometry, we conjecture that the Berkovich skeleton is the complement of the union of all virtual open disks, and prove this conjecture for $\mathcal{X}$ admitting a strictly semistable model with semiample canonical class.
\end{abstract}

\maketitle

\section{Introduction}








     Throughout the paper, we fix a non-Archimedean field $K$ with valuation ring $K^{\circ}$, its residue field $\widetilde{K}$, and a pseudo-uniformizer $\varpi_{K}$. The purpose of this article is to study the structure of the complement of skeleton of an analytic space in the sense of Berkovich \cite{MR1070709}. The development of the related study started from \cite{MR0774362} by Bosch and L\"utkebohmert for semi-stable reduction of curves in the rigid geometry setting, then by \cite{MR2395137} and \cite{MR3204269}, the authors described the structure of the complement of the Berkovich skeleton as a union of virtual open disks for Berkovich curves which admit semi-stable models. The present work aims at accomplishing two goals: First, we wish to elucidate the structure of higher dimensional Berkovich spaces; our second goal is to extend the analogy between Berkovich geometry and birational geometry developed in the study of the essential skeleton, which guides our approach to the first goal.

\par
\begin{situation}\textit{The Essential Skeleton}
    
The essential skeleton $\textsf{Sk}^{\textsf{ess}}(\mathcal{X}^{\textsf{an}})$ was first defined by Kontsevich and Soibelman \cite{MR2181810} to study Mirror Symmetry for K3 surfaces. Their definition was extended by Musta\c{t}\u{a} and Nicaise \cite{MR3370127} for any variety that admits global nonvanishing pluricanonical forms. The essential skeleton is a birational invariant of $\mathcal{X}$ and does not depend on the choice of a model over a discrete valuation ring of equal characteristic 0. However, should $\mathcal{X}$ admit a good dlt minimal model over a discrete valuation ring of equal characteristic 0, then the essential skeleton is identified with the Berkovich skeleton of that model \cite{MR3595497}, In certain cases, for example, the moduli space of genus zero stable curves with $n$-marked points, its associated essential skeleton also coincide with its faithful tropicalization \cite{xu2024faithfultropicalizationskeletonoverlinem0n,alma991031696915802976}.

\end{situation}

\begin{situation}\textit{Virtual Open Disks}

Mori's conjecture states that a characteristic $0$ projective variety $\mathcal{X}$ is uniruled (has a dense algebraic family of rational curves) if and only if the Kodaira dimension of $\mathcal{X}$ is $-$$\infty$. This is to say that the absence of pluricanonical forms on $X$ should imply the presence of a large family of rational curves. We introduce \textit{virtual open disks of relative dimension $1$} \ref{Open virtual disk} to be the Berkovich analog of these families. Roughly speaking, a virtual open disk in $\mathcal{X}^{\textsf{an}}$ comes from expressing $\mathcal{X}^{\textsf{an}}$ locally as a fibration of relative dimension $1$ \ref{Open virtual disk}, by finding a virtual open disk over a Berkovich point in the base, which in general will correspond to a valued field extension with some degree of transcendence of $K$.
\end{situation}
\begin{situation}\textit{Main Conjecture}

We conjecture in general that the complement of the Berkovich skeleton is a union of virtual open disks in $\mathcal{X}^{\textsf{an}}$:

\begin{conjecture}
(Conjecture \ref{bigconjecture})
 Let $\mathcal{X}$ be a smooth proper integral variety with dimension $n$ over a complete discrete valued field $K$ with equal characteristic $0$, then for any point $x \in \mathcal{X}^{\textsf{an}}$, $x$ is contained in a virtual open disk if and only if $x$ is not contained in the essential skeleton.
\end{conjecture}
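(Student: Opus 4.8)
The plan is to prove the conjecture in the stated case where $\mathcal{X}$ admits a strictly semistable model with semiample canonical class. The two directions are quite different in character. For the ``only if'' direction—that any point in an open fiber disk lies off the essential skeleton—I would argue that an open fiber disk is, by construction, an open disk sitting in a relative dimension $1$ fibration, and its points correspond to valued field extensions of large transcendence degree. The essential skeleton, by contrast, consists of monomial (quasi-monomial) points whose residue fields have transcendence degree exactly $n-1$ over $\widetilde{K}$ in the appropriate local model, i.e.\ points that are ``as large as possible'' in the base but not in the fiber direction. So the strategy is to show that the extra transcendence degree coming from a nonrigid point in the open disk forces such a point to have trivial pluricanonical weight, hence to lie in the locus where all pluricanonical forms vanish to positive order, which is precisely the complement of $\textsf{Sk}^{\textsf{ess}}(\mathcal{X}^{\textsf{an}})$ by the Musta\c{t}\u{a}--Nicaise weight-function characterization.

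For the harder ``if'' direction—that every point off the essential skeleton lies in some open fiber disk—I would work on the chosen strictly semistable model $\mathscr{X}/K^{\circ}$ and use the semiampleness of $K_{\mathscr{X}}$ to produce a fibration. Semiampleness gives, after passing to a suitable multiple, a morphism to projective space (the Iitaka/pluricanonical morphism) whose image realizes the canonical model; the essential skeleton coincides with the Berkovich skeleton of the associated good minimal (dlt) model via the identification cited in the excerpt (\cite{MR3595497}). The plan is then: take a point $x \notin \textsf{Sk}^{\textsf{ess}}$, use the retraction onto the skeleton of the semistable model together with the weight function to locate $x$ relative to the strata, and construct locally an étale or smooth coordinate in which $x$ has a nonrigid fiber coordinate. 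Concretely I would exhibit a relative dimension $1$ fibration on a neighborhood of the stratum closure to which $x$ retracts, arrange that the fiber through $x$ is an open disk (not a closed annulus or a point of the skeleton), and verify that $x$ lies in the open part.

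The main technical step, and the expected chief obstacle, is the construction of the fibration itself: producing, near an arbitrary off-skeleton point, a relative dimension $1$ structure whose fiber disk actually contains $x$. On a strictly semistable model the skeleton is a piecewise-linear polytopal complex glued from the dual intersection complex of the special fiber, and the retraction map $\mathcal{X}^{\textsf{an}} \to \textsf{Sk}(\mathscr{X})$ is well understood; the difficulty is that the fibration must be compatible with this combinatorial structure \emph{and} with the semiample linear system so that the disk direction is genuinely transverse to the skeleton. I expect to handle this by reducing to the local model of a strictly semistable scheme, where $\mathscr{X}$ étale-locally looks like $\spec K^{\circ}[t_0,\dots,t_r]/(t_0\cdots t_r - \varpi_K)\times \mathbb{A}^{n-r}$, and choosing one of the smooth transverse coordinates (an affine coordinate on the $\mathbb{A}^{n-r}$ factor, or a monomial cross-ratio on the toric part) as the fiber parameter; the open disk is then the preimage of a residue disk in that coordinate.

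Finally I would check the two constructions are inverse in the relevant sense and glue the local fibrations: the ``only if'' direction shows disks avoid $\textsf{Sk}^{\textsf{ess}}$, and the local construction in the ``if'' direction covers all of $\mathcal{X}^{\textsf{an}} \setminus \textsf{Sk}^{\textsf{ess}}(\mathcal{X}^{\textsf{an}})$. The semiampleness hypothesis enters precisely to guarantee that the canonical fibration is globally defined (so the Iitaka fibration is a morphism, not merely a rational map) and that the skeleton of the minimal model equals the essential skeleton, which is what lets the local disks avoid exactly the right set rather than some larger skeleton.
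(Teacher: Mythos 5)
Your overall architecture coincides with the paper's proof of Theorem \ref{semiample canonical}: semiampleness of $K_{\mathcal{X}}$ enters \emph{only} through \cite[Theorem 3.3.3]{MR3595497}, identifying the essential skeleton with the Berkovich skeleton of the strictly semistable model, after which one needs exactly the two statements you outline (the complement of that Berkovich skeleton is a union of open fiber disks; no point of the essential skeleton lies in one). Your ``if'' direction is essentially the paper's Theorem \ref{main theorem 1}: reduce to the local model $\spec K^{\circ}[T_{0},\dots,T_{n}]/(T_{0}\cdots T_{r}-\varpi)$ and fiber in a coordinate direction. Two corrections there: the fibrations are the projections $\mathbb{G}^{n,\textsf{an}}_{m,K}\to\mathbb{G}^{n-1,\textsf{an}}_{m,K}$ forgetting a torus coordinate, and have nothing to do with the pluricanonical system --- the Iitaka morphism has relative dimension $n-\kappa(\mathcal{X})$, not $1$, so it cannot be the source of open fiber disks, and in the paper semiampleness plays no role in constructing disks; also, an open fiber disk is a disk inside the curve fiber over a Berkovich point of the base, not ``the preimage of a residue disk'', which would be $n$-dimensional. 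The substantive content you would still have to supply is the induction and case analysis of Lemma \ref{the complement of skeleton of torus} (choosing which projection $\textsf{fp}_{i}$ to use according to which factors of the skeleton the point fails to lie in), together with the \'etale compatibility of skeletons in Lemma \ref{semistable fiber}.

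The genuine gap is your ``only if'' direction. The claim that the ``extra transcendence degree'' of a nonrigid disk point ``forces trivial pluricanonical weight'', and that the complement of $\textsf{Sk}^{\textsf{ess}}(\mathcal{X}^{\textsf{an}})$ is ``the locus where all pluricanonical forms vanish to positive order'', is not correct. An open fiber disk $\mathbb{D}(0,1)_{L}$ with $\textsf{tr.deg}(L|K)=n-1$ contains monomial and even divisorial points of $\mathcal{X}^{\textsf{an}}$ (for instance the maximal point of a closed subdisk has $d_{K}(x)=n$), and at such points the Musta\c{t}\u{a}--Nicaise weight of a pluricanonical form is finite and nontrivial; transcendence degree cannot separate disk points from skeleton points, since both populations contain points with $d_{K}(x)=n$. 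What must be proved is a \emph{comparison} statement: the weight (equivalently, by Temkin's theorem in equal characteristic $0$, the K\"ahler norm) never attains its maximum at a point of a disk. This is exactly what the paper's Theorem \ref{points in the essential skeleton} provides: it decomposes $\mathscr{K}_{U_{L}/K}\cong\varphi_{L}^{*}\mathscr{K}_{\mathbb{G}^{n-1,\textsf{an}}_{m,K}}\otimes_{\mathscr{O}_{U_{L}}}\mathscr{K}_{U_{L}/\mathbb{G}^{n-1,\textsf{an}}_{m,K}}$ along the relative dimension $1$ fibration, transfers maximality of the given form to maximality of its fiberwise factor on the curve $C_{L}$ via \cite[Corollary 6.3.12.]{MR3702313}, and then contradicts the emptiness of the geometric K\"ahler skeleton of an open unit disk (the nested closed disks argument of Example \ref{sk-disk-(0,0)} and Lemma \ref{ess-sk-curve}). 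Without this ingredient, or some substitute comparison argument, your first direction is unsupported; note also that the paper proves that direction for an arbitrary smooth proper integral $\mathcal{X}$, with no semiampleness hypothesis at all.
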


We are able to prove this conjecture under the condition that $
\mathcal{X}$ admits a strictly semi-stable model with semi-ample canonical class. Although this is a restrictive condition, this theorem provides evidence for the soundness of our approach.

\begin{theorem}(Main Theorem \ref{semiample canonical})
Let ${{\mathcal{X}}}$ be a proper and strictly semi-stable scheme over a complete discrete valuation ring $K^{\circ}$ such that the canonical divisor $K_\mathcal{X}$ is a semi-ample. Then for any point $x \in \mathcal{X}_{K}^{\textsf{an}}$, $x$ is contained in a virtual open disk if and only if $x$ is not contained in the essential skeleton.
\end{theorem}

\end{situation}

\begin{situation}
We prove Theorem \ref{semiample canonical} in two parts. We use the local picture of $\mathbb{G}^{n,\textsf{an}}_{m,K}$ fibered over $\mathbb{G}^{n-1,\textsf{an}}_{m,K}$ to establish the existence of open virtual disks outside the Berkovich skeleton in the semi-stable case. 
\begin{theorem}

    (Theorem \ref{main theorem 1}). Let ${{\mathcal{X}}}$ be a strictly semi-stable scheme over $K^{\circ}$, the complement of Berkovich skeleton $\textsf{Sk}({\widehat{\mathcal{X}}}_{\eta})^{\textsf{c}}$ is a union of open virtual disks.

\end{theorem}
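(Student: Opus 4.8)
The plan is to prove the statement pointwise: since open fiber disks are defined locally (Definition \ref{Open fiber disk}), it suffices to show that every $x \in \mathcal{X}_{K}^{\textsf{an}}$ lying in the complement $\textsf{Sk}(\widehat{\mathcal{X}}^{\textsf{rig}})^{\textsf{c}}$ is contained in at least one open fiber disk. First I would use the strictly semi-stable hypothesis to reduce to the standard étale-local chart $\mathcal{U} = \spf K^{\circ}\langle x_{0}, \dots, x_{n}\rangle/(x_{0}\cdots x_{r} - \varpi_{K})$; since both the analytification and the skeleton, together with the retraction $\rho\colon \mathcal{X}_{K}^{\textsf{an}} \to \textsf{Sk}$, are compatible with étale morphisms, the point $x$ and a neighborhood of it retracting to the relevant open face of the skeleton can be computed inside $\mathcal{U}$. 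On the tube of the open stratum the coordinates $x_{0}, \dots, x_{r}$ are invertible and satisfy $\prod_{i \le r} x_{i} = \varpi_{K}$, so this chart presents itself as a relative torus in the simplex directions $x_{0}, \dots, x_{r}$ together with polydisk directions $x_{r+1}, \dots, x_{n}$; this is the source of the local picture of $\mathbb{G}^{n,\textsf{an}}_{m,K}$ fibered over $\mathbb{G}^{n-1,\textsf{an}}_{m,K}$.

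Next I would set up the key fibration $\pi\colon \mathbb{G}^{n,\textsf{an}}_{m,K} \to \mathbb{G}^{n-1,\textsf{an}}_{m,K}$ forgetting the last coordinate. A base point $b = \pi(x)$ has fiber the analytic torus $\mathbb{G}^{1,\textsf{an}}_{m,\mathscr{H}(b)}$ over the complete residue field $\mathscr{H}(b)$, into which $x$ canonically lifts. Here I invoke the curve case \cite{MR0774362, MR2395137, MR3204269}: the complement of the skeleton of $\mathbb{G}^{1,\textsf{an}}_{m,\mathscr{H}(b)}$ is a union of open disks, each of which is by definition an open fiber disk. The structural fact I would then establish is that the skeleton is detected fibrewise: $x \in \textsf{Sk}(\mathbb{G}^{n}_{m})$ if and only if $\pi(x) \in \textsf{Sk}(\mathbb{G}^{n-1}_{m})$ and $x$ is the Gauss point of its fiber over $\pi(x)$, equivalently $x$ equals the monomial (Gauss) point $\eta_{w}$ attached to its tropicalization $w = \textsf{trop}(x)$. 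This follows by comparing, for every $f = \sum_{\alpha,k} c_{\alpha k}\, x^{\alpha} x_{n}^{k}$, the fibre Gauss norm with the maximal seminorm of prescribed tropicalization and checking the two agree monomial by monomial.

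Granting this characterization, the complement is covered as follows. If $x$ is not the Gauss point of its fiber over $b = \pi(x)$, then $x$ lies in an open disk of $\mathbb{G}^{1,\textsf{an}}_{m,\mathscr{H}(b)}$, i.e. in an open fiber disk, and we are done. Otherwise $x$ is fibrewise Gauss but $b \notin \textsf{Sk}(\mathbb{G}^{n-1}_{m})$; by induction on $n$, with base case $n=1$ the curve case, $b$ lies in an open fiber disk $D$ of $\mathbb{G}^{n-1}_{m}$, say a disk in the $x_{j}$-direction over a base point $c$. I would then re-choose the fibration $\pi'\colon \mathbb{G}^{n}_{m} \to \mathbb{G}^{n-1}_{m}$ so that $x_{j}$ becomes the fiber direction and show, via base change of $D$ along $\mathscr{H}(c) \hookrightarrow \mathscr{H}(\pi'(x))$, that the lift of $x$ lands in the lifted open disk, producing the required open fiber disk through $x$. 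The polydisk directions $x_{r+1}, \dots, x_{n}$ are handled in the same spirit: a point that is not Gauss in such a direction immediately sits in an open fiber disk for the corresponding projection.

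The main obstacle is the lifting step together with the fibrewise characterization of the skeleton. Reconciling ``$x$ is the Gauss point of its fiber over $b$'' with ``$x$ is the Gauss point of the total space'' forces one to track residue field extensions carefully, and to allow monomial coordinate changes by $\mathsf{GL}_{n}(\Z)$ so as to guarantee that a genuinely non-skeletal point is captured by some coordinate fibration rather than remaining Gauss in every single direction. Verifying that the base change of an open disk along $\mathscr{H}(c) \hookrightarrow \mathscr{H}(\pi'(x))$ remains an open disk containing the lift of $x$, and that this is compatible with the gluing of charts and with the faces of the skeleton simplex, is the technical heart of the argument.
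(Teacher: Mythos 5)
Your proposal is correct in substance and runs parallel to the paper's argument: reduce \'etale-locally to the standard chart of \ref{standard semi-stable model}, fiber the torus by coordinate projections, detect the skeleton fibrewise, and induct on $n$ with the curve case of \cite{MR3204269} as the base. Your ``Gauss point of the fiber'' criterion is exactly what the paper uses implicitly in Lemma \ref{the complement of skeleton of torus}, where it appears as the identification $\textsf{Sk}(\textsf{pr}^{-1}_{2}(\textsf{pr}_{2}(x)))=\textsf{Sk}(\mathbb{G}^{n-1,\textsf{an}}_{{m},K}) \times \mathcal{M}(\mathscr{H}(\textsf{pr}_{2}(x)))$. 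The genuine difference is the direction of the induction. In the key case where $x$ is Gauss in the fiber direction, the paper applies the inductive hypothesis to the fiber itself, namely to $\mathbb{G}^{n-1,\textsf{an}}_{m,\mathscr{H}(\textsf{pr}_{2}(x))}$ regarded as a torus over the enlarged field, so the disk containing $x$ is produced directly inside a fiber and no lifting is needed; the price is that the statement must hold over arbitrary non-Archimedean ground fields, which it does. You instead induct on the base $\mathbb{G}^{n-1,\textsf{an}}_{m,K}$ over the fixed field $K$ and then lift the disk $D \ni \pi(x)$ by base change along $\mathscr{H}(c) \hookrightarrow \mathscr{H}(\pi'(x))$. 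This does work --- $x$ maps into $q^{-1}(D)$ because the restriction of $|\cdot|_{x}$ to functions not involving the forgotten coordinate factors through $\pi(x)$, and the preimage of an open disk under such a base change is again an open disk --- but it is an extra lemma your route must carry, and you must strengthen the inductive hypothesis to remember that the disks produced are themselves (iterated) coordinate-fiber disks so the lifting applies to the whole tower; the paper's ``descend into the fiber'' organization avoids this bookkeeping entirely. Three smaller points. First, your worry about needing $\mathsf{GL}_{n}(\Z)$ monomial coordinate changes is unfounded: your own fibrewise characterization already shows that a point which is fiber-Gauss over a skeletal base point lies in the skeleton, so the recursion captures every non-skeletal point without changing coordinates. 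Second, the good-reduction/polydisk directions $x_{r+1},\dots,x_{n}$, which you dispatch as ``the same spirit,'' are where the paper needs separate input (Remark \ref{good reduction case}, the polystable factorization in the proof of Theorem \ref{main theorem 1}, and \cite[Lemme 2.2.14.]{thuillier:tel-00010990}), since the relevant fibers there are annuli $\mathbb{B}(0;1,1)$ rather than tori. Third, the ``compatibility of the skeleton with \'etale morphisms'' you invoke in the reduction step is precisely the content of Lemma \ref{semistable fiber} via \cite[Theorem 5.2.]{MR1702143}; it is a theorem of Berkovich about polystable formal schemes, not a formal property, and your write-up should cite it rather than assert it.
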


\end{situation}



\begin{situation}
    On the other hand, we must exclude the existence of open virtual disks from intersecting the essential skeleton. This procedure is analogous to the exclusion of families of rational curves from varieties admitting a global pluricanonical form. A key technical ingredient is Temkin's work on metrization of the pluricanonical forms \cite{MR3702313}.
\begin{theorem}\label{main theorem 2}(Main Theorem \ref{points in the essential skeleton})
     Let $\mathcal{X}$ be a smooth proper integral variety with dimension $n$ over $K$, then for any point $x$ of the essential skeleton $\textsf{Sk}^{\textsf{ess}}(\mathcal{X}^{\textsf{an}})$ there exists no open virtual disk in $\mathcal{X}$ containing $x$. 
\end{theorem}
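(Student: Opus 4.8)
The plan is to describe the essential skeleton through the weight functions of Musta\c{t}\u{a} and Nicaise, to compute these via Temkin's metrization of pluricanonical forms, and then to show that along an open fiber disk every weight function is strictly monotone toward the boundary, so that no interior point can be a minimizer. Recall that $\mathsf{Sk}^{\mathsf{ess}}(\mathcal{X}^{\mathsf{an}})$ is the union, over all $m\ge 1$ and all nonzero $\omega\in H^{0}(\mathcal{X},\omega_{\mathcal{X}/K}^{\otimes m})$, of the Kontsevich--Soibelman skeleta $\mathrm{Sk}(\mathcal{X},\omega)=\{\,\mathrm{wt}_\omega=\min\,\}$; moreover, via Temkin's metric, $\mathrm{wt}_\omega(x)=-\tfrac1m\log\lVert\omega(x)\rVert$ up to a fixed normalization, so that the minimum of $\mathrm{wt}_\omega$ is attained exactly where the Kähler seminorm $\lVert\omega(\cdot)\rVert$ is maximal. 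Hence it suffices to prove: if $x$ lies in an open fiber disk $D$, then for every nonzero $\omega$ there is a point of $D$ at which $\lVert\omega(\cdot)\rVert$ is strictly larger than $\lVert\omega(x)\rVert$, which forces $x\notin\mathrm{Sk}(\mathcal{X},\omega)$ for all $\omega$, and therefore $x\notin\mathsf{Sk}^{\mathsf{ess}}(\mathcal{X}^{\mathsf{an}})$.

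First I would fix the local relative-dimension-$1$ fibration $\phi\colon U\to B$ producing the open fiber disk $D\subset\phi^{-1}(b)$ over a base point $b\in B^{\mathsf{an}}$, together with a fiber coordinate $t$ identifying $D$ with $\{\,|t|<1\,\}$ over $\mathscr{H}(b)$. Writing a pluricanonical form locally as $\omega=f\cdot(\eta\wedge dt)^{\otimes m}$, where $\eta$ is a top form pulled back from $B$ and $f$ is analytic on $D$, I would evaluate $\lVert\omega(\cdot)\rVert$ along $D$. The two standard inputs on the Berkovich disk are that $\lVert dt\rVert(x)$ equals the diameter $\rho(x)$ of $x$, so that $\lVert dt\rVert(x_r)=r$ at the Gauss point $x_r$ of radius $r$, and that $|f(x_r)|$ is nondecreasing and convex in $\log r$. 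Granting the factorization $\lVert\eta\wedge dt\rVert(x)=C\cdot\rho(x)$, with the base contribution $C$ constant along the fiber since $\phi(x)=b$ is fixed, one gets $\lVert\omega(x_r)\rVert=C^{m}\,|f(x_r)|\,r^{m}$, which is strictly increasing in $r$ on $(0,1)$. For an arbitrary $x\in D$ of diameter $\rho(x)<1$ one then has $\lVert\omega(x)\rVert=C^{m}|f(x)|\rho(x)^{m}\le\lVert\omega(x_{\rho(x)})\rVert<\lVert\omega(x_r)\rVert$ for every $r$ with $\rho(x)<r<1$, the middle inequality because $x$ lies in the disk whose Gauss point is $x_{\rho(x)}$. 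Thus $x$ is not a maximizer of $\lVert\omega(\cdot)\rVert$, equivalently $\mathrm{wt}_\omega(x)$ strictly exceeds its minimum, for every $\omega$.

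The main obstacle is the factorization of Temkin's Kähler seminorm of the top pluricanonical form along the fibration: precisely, that upon restriction to $\phi^{-1}(b)$ the seminorm of $\eta\wedge dt$ splits as a base factor constant on $D$ times the relative factor $\rho(x)=\lVert dt\rVert$. I would establish this by analyzing the seminorm with respect to the tower $K\subset\mathscr{H}(b)\subset\mathscr{H}(x)$ and the cotangent exact sequence $0\to\phi^{*}\Omega_{B/K}\to\Omega_{\mathcal{X}/K}\to\Omega_{\mathcal{X}/B}\to 0$, using that for the smooth morphism $\phi$ the induced seminorm on $\det\Omega_{\mathcal{X}/K}$ is the tensor product of the seminorms on the pulled-back base determinant and on the relative determinant, the former being insensitive to motion within the fiber. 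Once this compatibility is in place I expect the remaining steps to be routine. The underlying heuristic is the birational one: a pluricanonical form decays toward the interior of the disk, exactly as it must vanish along the rational curves that would witness uniruledness, so the essential skeleton, where forms are largest, can meet an open fiber disk only at its boundary — which is not in the disk.
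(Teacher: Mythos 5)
Your strategy is essentially the one the paper follows: both arguments run through Temkin's metrization, identifying $\textsf{Sk}^{\textsf{ess}}(\mathcal{X}^{\textsf{an}})$ with the union of maximality loci $\textsf{M}_\omega$ of the K\"ahler seminorms (minimizing the weight function is maximizing the norm, in equal characteristic $0$), then factor the pluricanonical sheaf along the local relative-dimension-$1$ fibration via the cotangent sequence, and finally rule out an interior maximum on the fiber disk. Where you differ is the one-dimensional endgame and the status of the key factorization. The paper's endgame is soft: it glues the relative factors $\phi_i$ into a relative form $\omega_1$ on the fiber curve $C_L$, deduces that $x_0$ would lie in $\textsf{Sk}^{\textsf{k\"ah}}(C_L)$, and contradicts the fact that the (geometric) K\"ahler skeleton of an open unit disk is empty --- proved by a nested-closed-disks trick (Example \ref{sk-disk-(0,0)}, Lemma \ref{ess-sk-curve}) with no explicit norm computation. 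You instead compute directly: $\lVert dt\rVert$ equals the radius function, $|f|$ obeys the maximum principle at Gauss points, hence the norm strictly increases toward the boundary. Your computation is more explicit but leans on the formula $\lVert dt\rVert(x)=\rho(x)$, which is a residue-characteristic-$0$ fact (in mixed characteristic it acquires different-type correction factors); the paper's nested-disk argument is what lets it avoid this. As for the step you correctly flag as the main obstacle --- multiplicativity of the K\"ahler seminorm with respect to the decomposition $\mathscr{K}_{U_L/K}\cong\varphi_L^{*}\mathscr{K}_{\textsf{base}}\otimes\mathscr{K}_{\textsf{rel}}$, with the base factor constant along the fiber --- this is precisely what the paper cites from Temkin (Corollary 6.3.12), so it need not be reproven; but be aware it is not formal (determinant seminorms are not multiplicative in arbitrary admissible exact sequences) and it too uses equal characteristic $0$, which is where the section's standing hypothesis enters your argument twice.

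Two small patches are needed to make your monotonicity step airtight. First, strict increase of $C^{m}|f(x_r)|r^{m}$ fails in the degenerate cases where $f$ vanishes identically on the fiber disk, or where the base contribution $C=\lVert\eta\rVert(b)$ is zero: then $\lVert\omega\rVert\equiv 0$ on $D$. This does not hurt the conclusion --- since $\omega\neq 0$ its norm is positive somewhere (e.g.\ at divisorial points, where the weight is finite), so a point of norm zero is a fortiori not a maximizer --- but you should say so rather than assert strict monotonicity unconditionally. Second, for a point $x\in D$ that is not itself a Gauss point (type $1$ or $4$), the comparison $\lVert\omega(x)\rVert\le\lVert\omega(x_{a,r})\rVert$ should be phrased via a choice of closed disk $\mathbb{D}(a,r)\subseteq D$ containing $x$ with $\rho(x)<r<1$, using the maximum principle for $|f|$ and $\rho(x)<r$; as written, ``the disk whose Gauss point is $x_{\rho(x)}$'' presupposes a center. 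With these repairs your argument is correct and is, in substance, a computational proof of the same fact the paper obtains abstractly: the K\"ahler seminorm of a nonzero form admits no maximum in the interior of an open fiber disk.
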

\end{situation}
\begin{situation}
When $\mathcal{X}$ itself is uniruled, we consider the essential skeleton to be empty, so we conjecture that $\mathcal{X}^{\textsf{an}}$ is covered by open virtual disks after a finite field extension.
\begin{conjecture}
    Let $\mathcal{X}$ be a uniruled variety over $K$, then $\mathcal{X}^{\textsf{an}}$ can be covered by open virtual disks after a finite field extension $K'|K$.
\end{conjecture}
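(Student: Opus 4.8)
We sketch a possible approach, assuming as in Conjecture \ref{bigconjecture} that $\mathcal{X}$ is smooth and proper. The plan is to reduce to the local toric picture already controlled by Theorem \ref{main theorem 1}, using the deformation theory of rational curves as the source of the fibrations. Since $K$ has equal characteristic $0$ and $\mathcal{X}$ is uniruled, after a finite extension $K'|K$ I would choose an irreducible component $M$ of the space of morphisms $\mathbb{P}^{1}\to\mathcal{X}_{K'}$ parametrizing \emph{free} rational curves whose universal evaluation $e\colon \mathcal{C}\to\mathcal{X}_{K'}$ is dominant; the extension $K'$ is needed only to split off a geometrically irreducible such component and to equip it with a $K'$-point, after which the entire construction is stable under any further base change, in particular under passing to $\mathscr{H}(x)$ for a Berkovich point $x$ of arbitrary rank. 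This is the reason a single finite extension can cover points of all ranks: an open fiber disk already contains points of large transcendence degree. First I would cut $M$ down to a general $(n-1)$-dimensional subfamily $M'$ so that the restricted evaluation $e'\colon \mathcal{C}'\defeq\mathcal{C}\times_{M}M'\to\mathcal{X}_{K'}$ is dominant and generically finite; then $\mathcal{C}'\to M'$ is a proper relative-dimension-$1$ fibration whose geometric fibers are rational curves.

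Next I would transport this fibration onto $\mathcal{X}_{K'}$. On the dense open locus $V$ where $e'$ is \'etale, a local section of $e'$ identifies an open of $V$ with an open of $\mathcal{C}'$ and yields a relative-dimension-$1$ fibration $\pi\colon V\to M'$ whose fiber through a point is an open piece of the rational curve sweeping it out. Analytifying, $\pi^{\textsf{an}}\colon V^{\textsf{an}}\to M'^{\textsf{an}}$ exhibits $V^{\textsf{an}}$ locally as a relative-dimension-$1$ fibration in the sense of \ref{Open fiber disk}, and the fiber over a Berkovich point $b$ is an open subset of $\mathbb{P}^{1,\textsf{an}}_{\mathscr{H}(b)}$. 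Since $\mathbb{P}^{1,\textsf{an}}_{\mathscr{H}(b)}$ with a single type-$2$ point removed is a union of open disks, every point of the fiber except one distinguished Gauss point lies in an open fiber disk; and, exactly as in the curve case underlying Theorem \ref{main theorem 1}, that remaining point is absorbed by changing the coordinate on the fibers, equivalently by replacing $M'$, the section, and the $\mathbb{P}^{1}$-coordinate. Taking the union over all such choices covers $V^{\textsf{an}}$ by open fiber disks.

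The hard part will be the complement of $V$. The locus where the uniruling degenerates --- the non-free locus together with the branch locus of $e'$ --- need not itself be uniruled, so a naive Noetherian induction on its components fails. What survives is that every point of $\mathcal{X}$ lies on a rational curve: realizing the family over the proper Chow variety makes the evaluation proper and dominant, hence surjective, so even boundary points of $\mathcal{X}^{\textsf{an}}$ sit on (possibly reducible) rational curves. The crux is to upgrade such a curve through a boundary point to a genuine relative-dimension-$1$ analytic fibration near that point with the point inside an open fiber disk; for reducible or non-free members this forces one to control how nearby free curves specialize, and this is where I expect the real difficulty to lie. A parallel route I would pursue is to exploit that uniruledness gives Kodaira dimension $-\infty$, so $\mathcal{X}$ carries no nonzero pluricanonical forms and the essential skeleton is empty; if one shows that every point of $\mathcal{X}^{\textsf{an}}$ avoids the Berkovich skeleton of \emph{some} strictly semistable model --- equivalently that the intersection of all such skeleta is empty --- then Theorem \ref{main theorem 1} places every point in an open fiber disk at once. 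Proving this emptiness, namely that no non-essential point is forced into every skeleton, is the mirror image of the boundary problem above and is, I expect, the single hardest input to the whole argument.
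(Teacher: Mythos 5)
You are attempting the general statement, but be aware of its status in the paper: this is Conjecture \ref{main conjecture uniruled}, and the paper does \emph{not} prove it. What the paper actually establishes are the special cases of ruled varieties and of $\mathbb{P}^{n}_{K}$ (Theorem \ref{Pn is NA uniruled}), and it does so by a route entirely different from yours: Lemma \ref{NA uniruled blowup} shows that non-Archimedean uniruledness is invariant under smooth blow-ups (using Theorem \ref{main theorem 1} to reduce to birational points), and then Kapranov's two realizations of $\overline{\textsf{M}}_{0,n}$, as an iterated blow-up of $(\mathbb{P}^{1}_{K})^{n-3}$ and also of $\mathbb{P}^{n-3}_{K}$, transfer the easy covering of $(\mathbb{P}^{1,\textsf{an}}_{K})^{n-3}$ by fiber disks to $\mathbb{P}^{n-3,\textsf{an}}_{K}$. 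That argument deliberately sidesteps every difficulty your sketch runs into, at the cost of proving much less; your Mori-theoretic strategy aims at the full conjecture, and the parts of it that are complete (the generic fibration over an $(n-1)$-dimensional family $M'$, and covering the complement of the fiber skeleta by disks) do not go beyond what Theorem \ref{main theorem 1} and Remark \ref{Complement decomposition weak form} already give for a semistable model.

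The two gaps you flag are genuine, and the first is sharper than you state. On the \'etale locus $V$, the fiber of $\pi\colon V^{\textsf{an}}\to M'^{\textsf{an}}$ over a point $b$ is not $\mathbb{P}^{1,\textsf{an}}_{\mathscr{H}(b)}$ but a punctured rational curve (the curve minus its intersection with the branch and non-free loci). As soon as there are at least two punctures, this fiber has a nonempty skeleton, a finite tree spanning the punctures, not a single Gauss point, and \emph{no} point of that skeleton lies in any open disk contained in the fiber (exactly as no skeleton point of $\mathbb{G}^{1,\textsf{an}}_{m,\mathscr{H}(b)}$ lies in a disk inside $\mathbb{G}^{1,\textsf{an}}_{m,\mathscr{H}(b)}$). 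Your proposed repair, varying the section and the $\mathbb{P}^{1}$-coordinate and taking unions, therefore requires showing that the intersection over all such choices of the resulting skeleta is empty; but that is precisely the emptiness statement you defer to the end as the ``hardest input,'' so the argument is circular at this point rather than merely incomplete. Second, at the degenerate locus: properness of the Chow-family evaluation does place every point of $\mathcal{X}^{\textsf{an}}$ in its image, but lying in the image of a proper, non-smooth map produces neither the smooth relative-dimension-$1$ morphism nor the disk in a fiber that \ref{Open fiber disk} requires; smoothness of the evaluation fails exactly where the curves become reducible or non-free, which is where the conjecture lives. Both points are the actual content of the conjecture, and neither is resolved by the sketch.
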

By using the theorem \ref{main theorem 1}, we have the following result:
\begin{example}
    $\mathbb{P}_{K}^{n,\textsf{an}}$ can be covered by open virtual disks after a finite field extension $K'|K$.
\end{example}
\end{situation}
\begin{situation}
An alternative description for the complement of Berkovich skeleton of semi-stable variety is motivated by a simple observation of V. Berkovich \cite{MR2263704}, that is, the maximal analytic continuation of Iwasawa log function $\logg^{\lambda}$ coincides with the complement of the essential skeleton of the one-dimensional torus. By using the local model of semi-stable reduction, we generalized the observation to the following proposition:
\begin{proposition}(Proposition \ref{log description of complement of semistable skeleton})
        Let ${{\mathcal{X}}}$ be a strictly semi-stable scheme over $K^{\circ}$, the complement of Berkovich skeleton $\textsf{Sk}({\widehat{\mathcal{X}}}_{\eta})^{\textsf{c}}$ is a union of analyticity sets of the naive analytic function $\logg^{\lambda}$.
\end{proposition}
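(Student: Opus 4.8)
The plan is to reduce the higher-dimensional assertion to Berkovich's one-variable observation by fibering, using the local structure of the strictly semi-stable model together with Theorem~\ref{main theorem 1}.

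First I would recall the precise content of the one-dimensional case from \cite{MR2263704}. On $\G^{1,\textsf{an}}_{m,K}$ with coordinate $t$, the naive analytic function $\logg^{\lambda}$ is obtained by fixing a branch value $\lambda = \logg^{\lambda}(\varpi_K)$ and propagating the convergent series $\logg(1+u)=\sum_{k\geq 1}(-1)^{k-1}u^{k}/k$, valid on the open unit disk about $1$, across the torus. Its analyticity set — the largest open subset on which the germ is represented by a convergent power series — is exactly the complement of the skeleton $\textsf{Sk}(\G^{1,\textsf{an}}_{m,K})$, namely the union of the open disks hanging off the monomial (Gauss) points. This is the one-variable instance of the proposition, and it is the only input about $\logg^{\lambda}$ that I would use.

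Next I would apply Theorem~\ref{main theorem 1} to express $\textsf{Sk}(\widehat{\mathcal{X}}^{\textsf{rig}})^{\textsf{c}}$ as a union of open fiber disks. By construction each such disk arises from a local presentation of $\mathcal{X}^{\textsf{an}}$ as a relative dimension $1$ fibration modeled on $\G^{n,\textsf{an}}_{m,K}\to \G^{n-1,\textsf{an}}_{m,K}$: over a Berkovich point $y$ of the base there is an open disk $D_y$ inside the fiber, which is $\mathscr{H}(y)$-analytically an open disk in $\G^{1,\textsf{an}}_{m,\mathscr{H}(y)}$. Pulling back the fiber coordinate and taking the Iwasawa branch with value in $\mathscr{H}(y)$, I would apply the one-dimensional statement over the complete valued field $\mathscr{H}(y)$: the analyticity set of $\logg^{\lambda}$ along this fiber is the complement of the fiber's skeleton, which is precisely $D_y$. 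Taking the union over the open fiber disks furnished by Theorem~\ref{main theorem 1} then exhibits $\textsf{Sk}(\widehat{\mathcal{X}}^{\textsf{rig}})^{\textsf{c}}$ as a union of analyticity sets of $\logg^{\lambda}$.

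The main obstacle I anticipate is the coherence of the branch datum across the fibration. A priori $\logg^{\lambda}$ is produced fiber by fiber, and one must verify that the local torus charts coming from the strictly semi-stable model overlap in a manner compatible with the Iwasawa branch, so that the monomial/skeleton locus is intrinsically the non-analyticity locus, independent of the chart and of the base point $y$. Establishing this chart-independence — equivalently, that $\logg^{\lambda}$ descends to a single well-defined naive analytic function on the overlaps rather than a merely fiberwise object — is where the real work lies; once it is in place, the fiberwise reduction to Berkovich's observation is immediate.
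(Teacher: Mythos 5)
Your reduction to Berkovich's one-variable result via the fibration structure of the semi-stable model is the same broad strategy the paper uses, but the proposal has a genuine gap, and it also locates the difficulty in the wrong place. The crux is \emph{not} ``coherence of the branch datum across the fibration'': the proposition asserts a set-theoretic union of analyticity sets of (possibly different) Iwasawa log functions, one for each chart, coordinate and branch, so nothing has to descend or glue on overlaps. What actually needs proof --- and what your proposal never supplies, beyond declaring the rest ``immediate'' --- is the exact identification of each analyticity set. The paper reduces (through Propositions \ref{complement of skeleton of Gn} and \ref{complement of skeleton of mono chart}) to the statement that for the \'etale chart map $g: \mathfrak{V}^{\textsf{rig}} \longrightarrow \mathbb{B}(0;r_{1},r_{2})_{K} \subseteq \G^{1,\textsf{an}}_{m,K}$ one has $U_{\logg^{\lambda}(g)}=g^{-1}(U_{\logg^{\lambda}})$. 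The inclusion $\supseteq$ is free, since a pullback of an analytic function is analytic; the content is the inclusion $\subseteq$, i.e.\ that $\logg^{\lambda}(g)$ admits \emph{no} analytic continuation across any point lying over the skeleton. The paper proves this by contradiction: a strictly larger analyticity set would produce a connected open $U'$ strictly containing $D'=\{|g-1|<1\}$ on which $\logg^{\lambda}(g)$ is analytic, which is excluded by the same connectedness argument as Berkovich's Lemma 1.4.1 in \cite{MR2263704}. This equality is genuinely not automatic --- the paper itself warns that $U_{\varphi}\times_{Y}X \subsetneq U_{f^{*}(\varphi)}$ in general for pullbacks of naive analytic functions --- so a proof that omits it has omitted the theorem.

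A second, related omission: your argument only addresses the covering inclusion $\textsf{Sk}(\widehat{\mathcal{X}}^{\textsf{rig}})^{\textsf{c}} \subseteq \bigcup U$, using Theorem \ref{main theorem 1} to place every point off the skeleton inside some fiberwise analyticity set. For the proposition one also needs the reverse inclusion, namely that these analyticity sets never meet the skeleton; this requires both the non-extension statement above and the compatibility of fiber skeletons with the ambient skeleton (every point of $\textsf{Sk}(\widehat{\mathcal{X}}^{\textsf{rig}})$ is a skeleton point of the one-dimensional fiber through it, which is Lemma \ref{semistable fiber}), neither of which appears in your proposal. Finally, a smaller slip: the analyticity set of $\logg^{\lambda}$ along the fiber over $y$ is the complement of that fiber's entire skeleton, i.e.\ the union of \emph{all} open disks in the fiber, not ``precisely $D_y$.''
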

\end{situation}
\subsection*{Acknowledgements.}

The first author was supported by the Simons Foundation Collaboration Grant 524003 and he is very grateful to the Institute for Mathematics and Informatics at the Bulgarian Academy of Sciences (IMI-BAS) for hosting his visit during the completion of this project. 
Most part of this work was completed whilst the second author stayed at Mathematics and Informatics at the Bulgarian Academy of Sciences (IMI-BAS) and the second author was supported by the National Science Fund of Bulgaria, National Scientific Program "VIHREN", Project no. KP-06-DV-and he would like to thank Ludmil Katzarkov for his support. Parts of preliminary results have been dealt with in the third author's PhD dissertation.  We want to thank the anonymous referees for their valuable
suggestions.

\section{Preliminaries}
\begin{situation}
    For our purposes a \textit{non-Archimedean field} means a complete topological field $K$ with a nontrivial non-Archimedean valuation $|-|_{K}$. There exists a nonzero element $\varpi$ \textit{pseudo-uniformizer} such that $|\varpi|_{K} <1$ and $\varpi$-adic topology coincides with the valuation topology. We denote $\widehat{K^{\textsf{alg}}}$ as the completion of algebraic closure of $K$.
\end{situation}
\begin{situation}\label{K-analytic space} 
Let $K$ be a non-Archimedean field. All $K$-analytic spaces are assumed to be \textit{Hausdorff} and \textit{strictly $K$-analytic}.    
\end{situation}

\begin{situation}
    Let $\mathcal{X}$ be a locally finite type separated scheme over $K$, we denote by $\mathcal{X}^{\textsf{an}}$ the associated $K$-analytic space defined in \cite[3.4.1. Theorem]{MR1070709}. Let $\mathfrak{X}$ be a separated formal scheme locally finitely presented over the ring of integers $K^{\circ}$, we denote by $\mathfrak{X}_{\eta}$ the associated $K$-analytic space which is called the Berthelot generic fiber in the sense of \cite[§ 1]{MR1262943}. There is a canonical open immersion ${\mathfrak{X}_{\eta}}\hookrightarrow \mathcal{X}^{\textsf{an}}$ and ${\mathfrak{X}_{\eta}}=\mathcal{X}^{\textsf{an}}$ when $\mathcal{X}$ is proper.
    \end{situation}

\begin{situation}
    
        Let $L$ be a non-Archimedean $K$-field. For the algebraic extension part, we denote $e_{L|K}$\footnote{Ramification index if $L|K$ is finite.} and $f_{L|K}$ be the cardinality of the quotient of value groups $|L^{\times}|/|K^{\times}|$ and $[\widetilde{L}:\widetilde{K}]$ respectively. For the transcendental extension part, we denote $s_{L|K}\defeq \textsf{rank}_{\mathbb{Q}}(|L^{\times}|/|K^{\times}|\otimes_{\mathbb{Z}}\mathbb{Q})$\footnote{$\sqrt{|K^{\times}|}=|K^{\textsf{sep}, \times}|=\mathbb{Q}\otimes_{\Z}|K^{\times}|$},  $t_{L|K}\defeq \textsf{tr.deg.}(\widetilde{L}|\widetilde{K})$ and $d_{L|K} \defeq s_{L|K}+t_{L|K}$. For a point $x$ belonging to a $K$-analytic space $X$, we denote $d_{K}(x)$, $s_{K}(x)$ and $t_{K}(x)$ as $d_{\mathscr{H}(x)|K}$, $s_{\mathscr{H}(x)|K}$ and $t_{\mathscr{H}(x)|K}$ respectively. The set $X_{\textsf{st}} \defeq \{x \in X\,|\,d_{K}(x)=0\}$ has the following simple property under \'etale morphisms:
     \begin{lemma}
\label{points under \'etale morphisms}
        Let $f: X \longrightarrow Y$ be a morphism of $K$-analytic spaces with relative dimension $0$, then we have $f^{-1}(Y_{\textsf{st}})=X_{\textsf{st}}$.
        \begin{proof}
           Note that morphisms of $K$-analytic spaces do not increase the value of $s_{K}(x)$ and $t_{K}(x)$, so it suffices to show the $d_{K}(f(x))=d_{K}(x)$ which is equivalent to $d_{\mathscr{H}(f(x))}(\mathscr{H}(x))=0$. This holds since $\mathscr{H}(x)|\mathscr{H}(f(x))$ is finite. 
        \end{proof}      
     \end{lemma}
\end{situation}
    \begin{definition}
        Let $X$ be an $K$-analytic space, a point $x \in X$ is called \textit{monomial} if $d_{K}(x)=\dimm_{x}(X)$ and the set of monomial points of $X$ is denoted by $X^{\textsf{mon}}$. A monomial point $x \in X$ is called \textit{divisorial point} if $t_{K}(x)=\dimm_{x}(X)$.
    \end{definition}
    \begin{situation}
            For $X=\mathcal{X}^{\textsf{an}}$ when $\mathcal{X}$ is integral, we call the preimage of the generic point of canonical morphism $\tau: \mathcal{X}^{\textsf{an}} \longrightarrow \mathcal{X}$ the set of \textit{birational points} of $\mathcal{X}$ and denote by $\mathcal{X}^{\textsf{bir}}$, which can be also interpreted as the set of real valuations $\textsf{K}(X) \longrightarrow \mathbb{R}$ that extend the valuation $|-|_{K}$ on the ground field $K$. The point $x \in \mathcal{X}^{\textsf{an}}$ is divisorial if and only if there exists a formal model $\mathfrak{X}$ of $\widehat{\mathcal{X}}_{\eta}$ such that $x$ is the preimage of a generic point of $\mathfrak{X}_{\textsf{s}}$ under the reduction map. For a given semi-stable formal model $\mathfrak{X}$ of ${\widehat{\mathcal{X}}}_{\eta}$, we have associated Berkovich skeleton denoted as $\textsf{Sk}(\mathfrak{X}) \subseteq \mathcal{X}^{\textsf{mon}} \subseteq \mathcal{X}^{\textsf{bir}}$ of ${\widehat{\mathcal{X}}}_{\eta}$, for more details of constructions associated to strictly semi-stable (polystable) formal scheme, please see \cite[4.6.]{MR3479562} or \cite{MR1702143}. A divisorial point $x \in \mathcal{X}^{\textsf{an}}$ is monomial if and only if there exits a formal model of $\widehat{\mathcal{X}}_{\eta}$ such that $x \in \textsf{Sk}(\mathfrak{X})$. Throughout the paper, we will use $\textsf{Sk}^{\textsf{ess}}(\mathcal{X}^{\textsf{an}})$ to represent the essential skeleton defined in \cite[Definition 4.10.]{MR3370127} for a smooth projective integral scheme over a complete discrete valued field $K$ with equal characteristic $0$, and $\textsf{Sk}^{\textsf{k\"ah}}(X)$ to represent the  K\"ahler skeleton for a quasi-smooth $K$-analytic space $X$, see Definition \ref{Kahsk defin}. In the case of $\G^{n,\textsf{an}}_{m,K}$, one can construct its standard skeleton $\textsf{Sk}(\G^{n,\textsf{an}}_{m,K})$, see \cite{MR3702313}. Moreover, $\textsf{Sk}(\G^{n,\textsf{an}}_{m,K})$ coincides with its K\"ahler skeleton by \cite[Remark 7.2.2.]{MR3702313}.
    \end{situation}

\begin{proposition}\cite[Proposition 2.4.8.]{MR3370127}
    When the ground field $K$ is discretely valued. Let $x$ be a monomial point of $\mathcal{X}^{\textsf{an}}$. Then the following are equivalent:
    \begin{enumerate}
        \item The point $x$ is divisorial.
        \item The valuation $|-|_{x}$ is discrete.
        \item The analytic space $\mathcal{X}^{\textsf{an}}$ has rational rank one at $x$.
    \end{enumerate}
        
    \end{proposition}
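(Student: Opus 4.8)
The plan is to recast the three conditions as statements about the value group $|\mathscr{H}(x)^{\times}| \subseteq \mathbb{R}_{>0}$ and its rational rank, and then to feed in the structure theory of Abhyankar valuations. Write $n \defeq \dimm_{x}(\mathcal{X}^{\textsf{an}})$; since $x$ is monomial we have by definition the relation $s_{K}(x) + t_{K}(x) = d_{K}(x) = n$.

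First I would dispose of $(1) \Leftrightarrow (3)$, which is purely formal. The value group fits into a short exact sequence of abelian groups
\begin{equation*}
1 \longrightarrow |K^{\times}| \longrightarrow |\mathscr{H}(x)^{\times}| \longrightarrow |\mathscr{H}(x)^{\times}|/|K^{\times}| \longrightarrow 1,
\end{equation*}
and rational rank is additive along such sequences because $-\otimes_{\mathbb{Z}}\mathbb{Q}$ is exact. As $K$ is discretely valued, $\textsf{rank}_{\mathbb{Q}}|K^{\times}| = 1$, so the rational rank of $|\mathscr{H}(x)^{\times}|$ equals $1 + s_{K}(x)$. Thus condition $(3)$, that $\mathcal{X}^{\textsf{an}}$ has rational rank one at $x$, is equivalent to $s_{K}(x) = 0$. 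On the other hand $x$ is divisorial exactly when $t_{K}(x) = n$, which by the monomial relation is again equivalent to $s_{K}(x) = 0$. This settles $(1) \Leftrightarrow (3)$.

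The implication $(2) \Rightarrow (3)$ is immediate, since a nontrivial discrete subgroup of $(\mathbb{R}_{>0}, \cdot)$ is infinite cyclic and hence of rational rank one. The substantive direction is $(3) \Rightarrow (2)$, and its crucial input --- which I expect to be the main obstacle --- is the finite generation of $|\mathscr{H}(x)^{\times}|$ at a monomial point. To obtain this I would pass to an algebraic model of the invariants: choosing an affinoid neighborhood $\mathcal{M}(A)$ of $x$ and letting $\mathfrak{p}$ be the kernel of the character $A \to \mathscr{H}(x)$, the fraction field $F \defeq \textsf{Frac}(A/\mathfrak{p})$ is a finitely generated extension of $K$ whose completion with respect to $|-|_{x}$ is $\mathscr{H}(x)$. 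Since completion changes neither the value group nor the residue field, one has $|\mathscr{H}(x)^{\times}| = |F^{\times}|$ and $\widetilde{\mathscr{H}(x)} = \widetilde{F}$, while $\textsf{tr.deg.}(F|K) \leq n$ by general dimension theory. The Abhyankar inequality $s_{K}(x) + t_{K}(x) \leq \textsf{tr.deg.}(F|K)$ together with the monomial equality forces $\textsf{tr.deg.}(F|K) = n$, so $x$ is an Abhyankar point of $F|K$. The classical Abhyankar structure theorem then yields that $|F^{\times}|/|K^{\times}|$ is a finitely generated abelian group (and $\widetilde{F}|\widetilde{K}$ a finitely generated field extension).

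Granting this, $|\mathscr{H}(x)^{\times}|$ is finitely generated and torsion-free; if in addition $(3)$ holds it has rational rank one, hence is isomorphic to $\mathbb{Z}$. Realized inside $(\mathbb{R}_{>0}, \cdot)$ an infinite cyclic group is discrete, which is exactly $(2)$. Combining $(1)\Leftrightarrow(3)$, $(2)\Rightarrow(3)$ and $(3)\Rightarrow(2)$ completes the argument; the only part requiring genuine valuation theory, rather than formal manipulation of ranks, is the finite generation of the value group furnished by Abhyankar's theorem.
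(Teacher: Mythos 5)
Your overall strategy --- recasting everything in terms of the value group and making finite generation via Abhyankar's theorem the one substantive input --- is the right one, and it is essentially the strategy behind the cited result (the paper itself gives no proof; it only cites \cite[Proposition 2.4.8]{MR3370127}). The equivalence $(1)\Leftrightarrow(3)$ is indeed formal given this paper's definition of a divisorial point ($t_{K}(x)=\dimm_{x}(\mathcal{X}^{\textsf{an}})$), and $(2)\Rightarrow(3)$ is immediate. But your proof of $(3)\Rightarrow(2)$ has a genuine gap at exactly the step you flag as crucial. You set $F \defeq \textsf{Frac}(A/\mathfrak{p})$ for an affinoid neighborhood $\mathcal{M}(A)$ of $x$ and assert $\textsf{tr.deg}(F|K)\leq n$ ``by general dimension theory''. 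This is false: for affinoid algebras the Krull dimension does not bound the transcendence degree of the fraction field, because affinoid algebras are only topologically, not algebraically, finitely generated over $K$. Already for the Gauss point of the closed unit disk one has $A=K\langle T\rangle$ and $\mathfrak{p}=0$, and $K\langle T\rangle$ contains power series transcendental over $K(T)$ (suitable lacunary series, for instance), so $\textsf{tr.deg}(F|K)\geq 2 > 1 = n$ (in fact it is infinite). Consequently the Abhyankar inequality only yields the useless lower bound $n\leq\textsf{tr.deg}(F|K)$, equality is not forced, the structure theorem for Abhyankar valuations cannot be invoked, and finite generation of $|\mathscr{H}(x)^{\times}|$ is not established. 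The gap is not cosmetic: rational rank one by itself never implies discreteness (a dense subgroup of $\mathbb{R}_{>0}$ such as $2^{\mathbb{Z}[1/2]}$ has rational rank one), so without finite generation the argument collapses.

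The repair is to replace the analytic local model by the algebraic one. Since $\mathcal{X}$ is a scheme of finite type over $K$, a point $x\in\mathcal{X}^{\textsf{an}}$ is by construction a scheme point $\xi=\tau(x)\in\mathcal{X}$ together with a valuation on the residue field $\kappa(\xi)$ extending $|-|_{K}$, and $\mathscr{H}(x)$ is the completion of $\kappa(\xi)$ with respect to $|-|_{x}$. The field $\kappa(\xi)$ \emph{is} finitely generated over $K$ with $\textsf{tr.deg}(\kappa(\xi)|K)\leq n$, and, as you correctly note, completion changes neither the value group nor the residue field; so running your own argument with $F=\kappa(\xi)$ gives $n=s_{K}(x)+t_{K}(x)\leq\textsf{tr.deg}(\kappa(\xi)|K)\leq n$, hence equality, hence by Abhyankar's theorem $|\mathscr{H}(x)^{\times}|$ is a finitely generated abelian group. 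Your concluding step (finitely generated, torsion free, of rational rank one, hence infinite cyclic, hence discrete in $\mathbb{R}_{>0}$) then goes through verbatim. This algebraic route, through the center $\xi$ rather than an affinoid neighborhood, is the one taken in \cite{MR3370127}.
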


    When $x$ is a divisorial point, we have $\mathscr{H}(x)$ is a complete discrete valuation field. The associated valuation $|-|_{x}$ extends $|-|_{K}$.
\par
We review the notations of discs and annuli throughout this paper:

\begin{situation}
    \textsc{Discs and annuli}\label{disks and annuli}.
    Let $a$  be a rational point of $\mathbb{A}_{K}^{1,\textsf{an}}$ and $r$, $r_{1} < r_{2}$ be the non-zero real numbers.
     A standard form of open disc is denoted by $\mathbb{D}(a,r)_{K}=\{x \in \mathbb{A}_{K}^{1,\textsf{an}}\,|\,|T-a|_{x} < r\}$ . A standard form of closed annulus is denoted by $\mathbb{B}(a; r_{1},r_{2})_{K} \defeq \{x \in \mathbb{A}_{K}^{1,\textsf{an}}\,|\, r_{1} \leq |T-a|_{x} \leq r_{2}\}$
    \par

\begin{situation}\label{Open virtual disk}\textsc{open virtual disk}.
       Let $X$ be a connected $K$-analytic curve, then $X$ is called \textit{open (closed) virtual disc or annulus} over $K$ if $X_{\widehat{K^{\textsf{alg}}}}$ is a disjoint union of open (closed) discs or annuli. We use $\mathbb{D}_{K}$ to denote an open virtual disc over a field $K$.
\end{situation}


\end{situation}

    

\section{The complement of skeleton for higher dimensional $K$-analytic space}
\subsection{Semi-stable reduction}

\begin{definition}\label{standard semi-stable model}
    Let $\mathcal{X}$ be a scheme locally of finite presentation over $K^{\circ}$. For an open subscheme $\mathcal{U}_{\mathcal{X}}$ of $\mathcal{X}$, a pair $(\mathcal{X},\mathcal{U}_{\mathcal{X}})$ is called \textit{(strictly) semi-stable} over $K^{\circ}$ 
    if there exists an (Zariski) \'etale 
covering $\mathscr{V}=\{\mathcal{V}\}$ if we have (open) \'etale  maps:
\begin{equation}
\begin{tikzcd}[column sep=3em]
& (\mathcal{V},\mathcal{U}_{\mathcal{V}}) \arrow[dl,,"f\, \textsf{open}"] \arrow[dr,"g\,\textsf{\'et}"] & \\
(\mathcal{X},\mathcal{U}_{\mathcal{X}})  &&  \big(\mathcal{M} \defeq \textsf{Spec}\,K^{\circ}{[T_{0},\dots,T_{n}]}/(T_{0} \cdots T_{r}-\varpi), \mathcal{U}
\big)\end{tikzcd}
\end{equation}
such that $f^{-1}(\mathcal{U}_{\mathcal{X}})=\mathcal{U}_{\mathcal{V}}=g^{-1}(\mathcal{U})$ and $$\mathcal{U} \cong \textsf{Spec}\,K^{\circ}[T_{0},\dots,T_{n},T^{-1}_{0},\dots,T^{-1}_{m}]/(T_{0}\cdots T_{r}-\varpi)$$ for $0 \leqslant r \leqslant m \leqslant n$ and $\varpi$ is a fixed uniformizer of the complete discrete valuation ring $K^{\circ}$.
\end{definition}

From the description above, we have the morphisms above are strict log \'etale and 
\begin{align*}
\mathcal{U} &\cong \textsf{Spec}\,K^{\circ}[T_{0},\dots,T_{n},T^{-1}_{0},\dots,T^{-1}_{m},\varpi^{-1}]/(T_{0}\cdots T_{r}-\varpi)\\&\cong \textsf{Spec}\,K[T_{0},\dots,T_{n},T^{-1}_{0},\dots,T^{-1}_{m}]/(T_{0}\cdots T_{r}-\varpi)
\end{align*}
thus $D_{\mathcal{X}} \defeq \mathcal{X} \setminus \mathcal{U}_{\mathcal{X}} \subset \mathcal{X}_{K}$ and it is a normal crossing divisor by $D \defeq \mathcal{M} \setminus \mathcal{U} = V(T_{0} \cdots T_{m})$. It's easy to see when $m=r$,  we have $D=\mathcal{X}_{\widetilde{K}}$. Now we take the completion along special fibers for \ref{standard semi-stable model}, \'etale locally, we have an \'etale morphism of $K^{\circ}$-formal schemes $\widehat{g}:(\widehat{\mathcal{X}},\widehat{\mathcal{U}_{\mathcal{X}}}) \longrightarrow (\widehat{\mathcal{M}},\widehat{\mathcal{U}})$, then take the Berthelot functor on $\widehat{g}$, we have: $$\widehat{g}_{\eta}:(\widehat{\mathcal{X}}_{\eta},\widehat{\mathcal{U}_{\mathcal{X}}}_{\eta}) \longrightarrow (\mathcal{M} \Big(K \llangle T_{0},\dots,T_{n}\rrangle / (T_{0}\cdots T_{r}-\varpi)\Big), \widehat{\mathcal{U}}_{\eta})$$
Let $\widehat{\mathcal{M}}_{\eta} \defeq \mathcal{G}(n,r,m)$, then we have $\mathcal{G}(n,r,m) \cong \mathcal{G}(r,r,r) \times_{K} (\widehat{\A}^{n-r}_{K^{\circ}})_{\eta}$, for $\mathcal{G}(r,r,r)$
\begin{equation}\label{mono-chart-semistable}
    \varphi_{\mathcal{M}}: \mathcal{G}(r,r,r) \hookrightarrow \mathbb{G}^{r,\textsf{an}}_{{m},K} 
\end{equation}
such that $\mathcal{G}(r,r,r)$ is an affinoid domain of $\mathbb{G}^{r,\textsf{an}}_{{m},K}$ and for $(\widehat{\A}^{n-r}_{K^{\circ}})_{\eta}$ has an open covering consists of formal tori. From the description above, we can summarize the strictly semi-stable as Zariski locally, there is a smooth morphism $$\mathcal{X} \longrightarrow \textsf{Spec}\,K^{\circ}[T_{0},\dots,T_{n}]/(T_{0}\cdots T_{n}-\varpi).$$
\begin{remark}
  When $K^{\circ}$ is discrete-valued, then any strictly semi-stable (formal) scheme is a nondegenerate polystable (formal) scheme \cite[3.1]{MR2263704}.   
\end{remark}

\subsection{The Complement of Skeleton for Curves}\label{complement of sk for curve}
\par

In this subsection, we briefly review the structure of the skeleton of curves studied in \cite{MR2395137} and \cite{MR3204269}. For a smooth and connected analytic $K$-curve $X$, a \textit{triangulation} of $X$ is a discrete and closed subset $\mathbb{V}$ consists of rigid or type-2 or type-3 points such that $X \setminus \mathbb{V}$ gives connected component decomposition as a disjoint union of virtual open disks and a finite disjoint union of virtual open annuli. When the base field $K$ is algebraically closed, we have the connected component decomposition as:
\begin{equation*}
    X \setminus \mathbb{V} \cong \coprod \mathbb{D}(0,1)_{K} \smallcoprod \coprod^{n}_{i=1} \mathbb{B}(0,a_{i},1)_{K}
\end{equation*}
The skeleton of $X$ associated to $\mathbb{V}$
is the set:
\begin{equation*}
    \textsf{Sk}(X,\mathbb{V}) \defeq \mathbb{V} \smallcoprod \coprod^{n}_{i=1} \textsf{Sk}(\mathbb{B}(0,a_{i},1)_{K})
\end{equation*}
\begin{situation}
    Let $\mathfrak{C}$ be a semi-stable formal curve over $K^{\circ}$, consider the reduction map $\textsf{red}_{\mathfrak{C}}: \mathfrak{C}_{\eta} \longrightarrow |\mathfrak{C}_{\widetilde{K}}|=|\mathfrak{C}|$, we denote $\mathfrak{C}^{0}_{\widetilde{K}}$ be the set of generic points of the irreducible components of the special fiber $\mathfrak{C}_{\widetilde{K}}$, then the preimage $\textsf{red}^{-1}(\mathfrak{C}^{0}_{\widetilde{K}})$ gives a triangulation $\mathbb{V}_{\mathfrak{C}}$ of $\mathfrak{C}_{\eta}$. In this case, we have:
    \begin{proposition}\cite[Proposition 4.10.]{MR3204269}
    The Berkovich skeleton $\textsf{Sk}({\mathfrak{C}}_{\eta})$ is naturally identified with the skeleton $\textsf{Sk}({\mathfrak{C}}_{\eta},\mathbb{V}_{{\mathfrak{C}}})$.
\end{proposition}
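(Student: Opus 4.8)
The plan is to reduce the statement to the formal-local structure of a semi-stable formal curve and to the behaviour of the reduction map $\textsf{red}_{\mathfrak{C}}$, matching the intrinsic skeleton $\textsf{Sk}(\mathfrak{C}^{\textsf{rig}})$ of the model edge by edge with the combinatorial skeleton $\textsf{Sk}(\mathfrak{C}^{\textsf{rig}},\mathbb{V}_{\mathfrak{C}})$ attached to the triangulation. Because both the Berkovich skeleton of a semi-stable model and the reduction map are compatible with base change to $\widehat{K^{\textsf{alg}}}$ and are Galois-equivariant, I would first treat the case where $K$ is algebraically closed---where the virtual discs and annuli of the triangulation become honest discs and annuli---and recover the general case by Galois descent at the very end.

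First I would identify the vertex set. By semi-stable reduction the formal fibre $\textsf{red}_{\mathfrak{C}}^{-1}(\xi)$ of a closed point $\xi\in\mathfrak{C}_{\widetilde{K}}$ is, by the Bosch--L\"utkebohmert description of formal fibres, an open disc $\mathbb{D}(0,1)$ when $\xi$ is a smooth point and an open annulus of modulus $|a|$ when $\xi$ is a node with local equation $TS=a$. Over a generic point $\eta\in\mathfrak{C}^{0}_{\widetilde{K}}$ the fibre is instead a single divisorial (type-$2$) point, the Shilov point of the corresponding formal affinoid piece. Hence $\mathbb{V}_{\mathfrak{C}}=\textsf{red}^{-1}(\mathfrak{C}^{0}_{\widetilde{K}})$ is exactly the finite set of divisorial points attached to the irreducible components of the special fibre, which is precisely the vertex set of the intrinsic skeleton $\textsf{Sk}(\mathfrak{C}^{\textsf{rig}})$.

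Next I would analyse the complement $\mathfrak{C}^{\textsf{rig}}\setminus\mathbb{V}_{\mathfrak{C}}$. Using the anticontinuity of $\textsf{red}_{\mathfrak{C}}$, its connected components are the formal fibres above: open discs $\mathbb{D}(0,1)$ over smooth closed points, which contribute nothing to any skeleton, and open annuli $\mathbb{B}(0,a_{i},1)$ over the nodes. For a node with local model $TS=a$ the Berkovich skeleton of the annulus is the open arc of Gauss points $\{\eta_{r}:|a|<r<1\}$, whose two ends limit onto the divisorial points $\eta_{1}$ and $\eta_{|a|}$ in $\mathbb{V}_{\mathfrak{C}}$ reducing to the two components that meet at that node. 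Thus the edge of $\textsf{Sk}(\mathfrak{C}^{\textsf{rig}})$ attached to each node coincides with the skeleton of the associated annulus, with the correct endpoints and length, and summing over all nodes yields $\textsf{Sk}(\mathfrak{C}^{\textsf{rig}})=\mathbb{V}_{\mathfrak{C}}\sqcup\coprod_{i}\textsf{Sk}(\mathbb{B}(0,a_{i},1))=\textsf{Sk}(\mathfrak{C}^{\textsf{rig}},\mathbb{V}_{\mathfrak{C}})$; the identification is canonical since it is built from $\textsf{red}_{\mathfrak{C}}$ alone.

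The main obstacle is the local computation at the nodes: proving that the formal fibre over a node really is an open annulus of modulus $|a|$ and that the two boundary Gauss points of its skeleton reduce to the two adjacent generic points, independently of the chosen \'etale chart. This is exactly where the explicit local equation $TS=a$, the Bosch--L\"utkebohmert analysis of $\textsf{red}_{\mathfrak{C}}$ on $\spf K^{\circ}\{T,S\}/(TS-a)$, and the verification that no spurious vertices or edges appear must all be used. In the non-algebraically closed setting the residual difficulty is to check that the Galois action on the honest discs and annuli over $\widehat{K^{\textsf{alg}}}$ descends to the virtual discs and annuli of the triangulation, so that the descended skeleton is indeed $\textsf{Sk}(\mathfrak{C}^{\textsf{rig}},\mathbb{V}_{\mathfrak{C}})$.
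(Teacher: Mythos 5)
The first thing to note is that the paper contains no proof of this statement at all: it is imported verbatim as \cite[Proposition 4.10.]{MR3204269} (Baker--Payne--Rabinoff), and the authors use it as a black box in Section 3.2. So the only meaningful comparison is with the proof in that reference, and your outline does follow essentially that route: reduce to algebraically closed $K$, use the Bosch--L\"utkebohmert classification of formal fibres (open disc over a smooth closed point, open annulus of modulus $|a|$ over a node with local equation $TS=a$, a single type-2 Shilov point over each generic point of $\mathfrak{C}_{\widetilde{K}}$), observe that anticontinuity of $\textsf{red}_{\mathfrak{C}}$ makes these formal fibres the connected components of $\mathfrak{C}^{\textsf{rig}}\setminus\mathbb{V}_{\mathfrak{C}}$, and then match vertices with vertices and annulus skeletons with edges, descending at the end. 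Your vertex-set identification and complement decomposition are correct, and the descent step is consistent with the lemma the paper states immediately afterwards, namely $\pi^{\textsf{rig},-1}(\textsf{Sk}(\mathfrak{C}^{\textsf{rig}}))=\textsf{Sk}(\mathfrak{C}^{\textsf{rig}}_{\widehat{K^{\textsf{alg}}}^{\circ}})$.

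The one substantive thinness is that the step you yourself flag as ``the main obstacle'' is not an implementation detail but the entire content of the proposition. The combinatorial skeleton $\textsf{Sk}(\mathfrak{C}^{\textsf{rig}},\mathbb{V}_{\mathfrak{C}})$ is \emph{by definition} $\mathbb{V}_{\mathfrak{C}}\sqcup\coprod_{i}\textsf{Sk}(\mathbb{B}(0,a_{i},1))$, so the vertex and complement analysis only reformulates one side of the equality. What must actually be proved is that Berkovich's intrinsically defined skeleton of the polystable formal scheme --- the image of the deformation retraction constructed in \cite{MR1702143} --- when computed on the local model $\spf K^{\circ}\llangle T,S\rrangle/(TS-a)$, is exactly the arc of Gauss points $\{\eta_{r}:|a|\leq r\leq 1\}$, with no contribution over the smooth locus, and that this computation is independent of the \'etale chart. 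Your proposal asserts this (``the edge of $\textsf{Sk}(\mathfrak{C}^{\textsf{rig}})$ attached to each node coincides with the skeleton of the associated annulus'') rather than proving it, which makes the argument circular as written: you are using the conclusion to describe $\textsf{Sk}(\mathfrak{C}^{\textsf{rig}})$ edge by edge. To close the gap you would need to unwind Berkovich's retraction on the standard polystable chart and check \'etale-local invariance of its image; that is precisely what the cited reference supplies.
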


\begin{lemma}\cite[Lemma 3.4.]{MR3204269}
   The connected components of $\mathfrak{C}_{\eta}\setminus \textsf{Sk}({\mathfrak{C}}_{\eta}) \defeq \textsf{Sk}({\mathfrak{C}}_{\eta})^{\textsf{c}}$ are open unit disks. 
\end{lemma}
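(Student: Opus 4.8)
The plan is to reduce the statement to a purely local computation on the pieces of a triangulation, using the identification of $\textsf{Sk}(\mathfrak{C}^{\textsf{rig}})$ with $\textsf{Sk}(\mathfrak{C}^{\textsf{rig}},\mathbb{V}_{\mathfrak{C}})$ supplied by the preceding proposition. First I would record that, by the construction of the triangulation $\mathbb{V}_{\mathfrak{C}}=\textsf{red}^{-1}(\mathfrak{C}^{0}_{\widetilde{K}})$, the complement $\mathfrak{C}^{\textsf{rig}}\setminus\mathbb{V}_{\mathfrak{C}}$ decomposes as a disjoint union of virtual open disks $\mathbb{D}_{j}$ together with finitely many virtual open annuli $\mathbb{B}_{i}$, and that by definition $\textsf{Sk}(\mathfrak{C}^{\textsf{rig}},\mathbb{V}_{\mathfrak{C}})=\mathbb{V}_{\mathfrak{C}}\sqcup\coprod_{i}\textsf{Sk}(\mathbb{B}_{i})$ involves only the vertices and the skeleta of the annuli. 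Subtracting, the complement of the skeleton splits as
\[
\textsf{Sk}(\mathfrak{C}^{\textsf{rig}})^{\textsf{c}}=\coprod_{j}\mathbb{D}_{j}\;\sqcup\;\coprod_{i}\big(\mathbb{B}_{i}\setminus\textsf{Sk}(\mathbb{B}_{i})\big),
\]
so it suffices to treat each summand separately.

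The virtual open disks $\mathbb{D}_{j}$ are already of the required form and, being components of $\mathfrak{C}^{\textsf{rig}}\setminus\mathbb{V}_{\mathfrak{C}}$ disjoint from the annuli, contain no skeleton point; they thus contribute connected components that are open unit disks in the sense of \ref{disks and annuli}. The \emph{key step} is therefore to show that for a virtual open annulus $\mathbb{B}=\mathbb{B}(0,a,1)$ the set $\mathbb{B}\setminus\textsf{Sk}(\mathbb{B})$ is a disjoint union of open unit disks. After a finite extension $K'|K$ splitting the annulus I may assume $K$ algebraically closed and $\mathbb{B}=\{a<|T|<1\}$, with $\textsf{Sk}(\mathbb{B})=\{\eta_{0,r}:a<r<1\}$ the core segment of Gauss-type points. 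Viewing $\mathbb{B}$ as a subtree of $\mathbb{A}^{1,\textsf{an}}_{K}$, the skeleton is a connected segment, so each connected component $C$ of $\mathbb{B}\setminus\textsf{Sk}(\mathbb{B})$ meets $\textsf{Sk}(\mathbb{B})$ in a single boundary point $\eta_{0,r_{0}}$, which is necessarily of type $2$ (type $3$ points carry only the two skeleton directions). At such a point the tangent directions are indexed by $\mathbb{P}^{1}(\widetilde{K})$, of which exactly two point along the skeleton (towards $|T|=a$ and towards $|T|=1$); every other direction opens into a residue disk $\{|T-\alpha|<r_{0}\}$ with $|\alpha|=r_{0}$, which lies entirely in $\mathbb{B}$, avoids the skeleton, and becomes an open unit disk after scaling by an element of absolute value $r_{0}$. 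Running over all type-$2$ points of the core and all non-skeleton directions exhausts $\mathbb{B}\setminus\textsf{Sk}(\mathbb{B})$, giving the claimed decomposition.

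Finally I would descend from $K'$ back to $K$: the Galois action of $K'|K$ permutes the open unit disks appearing in the split picture, and the resulting quotient pieces are again open disks in the sense of \ref{disks and annuli}, so the connected components over $K$ are open unit disks. The step I expect to be the main obstacle is precisely this local annulus computation together with the descent bookkeeping: pinning down that the boundary point of each component is of type $2$, and that the branch it determines is a full open disk rather than a smaller annulus or a disk re-entering the skeleton, rests on the tree structure of the Berkovich curve and the residue-direction analysis at type-$2$ points, while keeping track of virtuality over a non-algebraically-closed base is where the remaining care is needed.
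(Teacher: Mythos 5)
Your core argument---splitting $\textsf{Sk}(\mathfrak{C}^{\textsf{rig}})^{\textsf{c}}$ along the triangulation into the open disks $\mathbb{D}_{j}$ and the pieces $\mathbb{B}_{i}\setminus\textsf{Sk}(\mathbb{B}_{i})$, and then analyzing an open annulus minus its skeleton through the tangent directions at type-2 points of the core---is correct, and it is exactly the route the paper intends: the paper offers no proof of its own (the lemma is quoted from the reference), and the remark immediately after it records that the whole structure is induced by the complement of the skeleton of an open annulus. Over algebraically closed $K$ your proof is complete and sound.

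The genuine gap is your final descent paragraph. Over a non-algebraically closed field $K$ the connected components of $\textsf{Sk}(\mathfrak{C}^{\textsf{rig}})^{\textsf{c}}$ are in general \emph{not} open unit disks: a Galois orbit of $d>1$ disks upstairs maps onto a single connected component downstairs, whose base change to $\widehat{K^{\textsf{alg}}}$ is a disjoint union of $d$ disks, so it cannot be isomorphic to $\mathbb{D}(0,1)_{K}$. Concretely, for $\mathbb{P}^{1}$ with good reduction over $\mathbb{Q}_{p}$ the skeleton is the single Gauss point, and the tube of a degree-$2$ closed point of the special fiber $\mathbb{P}^{1}_{\mathbb{F}_{p}}$ is a connected component of the complement which splits into two disjoint disks over $\mathbb{C}_{p}$. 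This is precisely why the paper states the decomposition $X\setminus\mathbb{V}\cong\coprod\mathbb{D}(0,1)_{K}\sqcup\coprod\mathbb{B}(0,a_{i},1)_{K}$ only when $K$ is algebraically closed, reads the present lemma in that setting, and handles general $K$ separately afterwards; indeed Remark \ref{Complement decomposition weak form} explicitly warns that the resulting union of unit disks over $K$ is \emph{not} a connected-component decomposition. So you should either restrict your statement to algebraically closed $K$ (deleting the last paragraph, after which your proof stands), or weaken the conclusion of the descent step from ``open unit disks'' to ``virtual open disks''; as written, that step asserts something false.
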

\begin{remark}
    Essentially, the structure of the complement of the skeleton of a curve is induced by the complement of the skeleton of an open annulus \cite[Lemma 2.12.]{MR3204269}
\end{remark}
For a non-algebraically closed field $K$, consider a $K$-analytic curve $C$ admitting a semi-stable formal model $\mathfrak{C}$, then for the formal curve $\mathfrak{C}_{\widehat{K^{\textsf{alg}}}^{\circ}}$, and the morphism $\pi: \mathfrak{C}_{\widehat{K^{\textsf{alg}}}^{\circ}} \longrightarrow \mathfrak{C}$, we have the following lemma:
\begin{lemma}
For generic fiber $\pi_{\eta}: C_{\widehat{K^{\textsf{alg}}}} \longrightarrow C$, then $\pi_{\eta}^{-1}(\textsf{Sk}(\mathfrak{C}_{\eta}))=\textsf{Sk}({\mathfrak{C}_{\eta}}_{\widehat{K^{\textsf{alg}}}^{\circ}})$.    
\end{lemma}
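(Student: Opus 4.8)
The plan is to reduce the equality of skeleta to the two pieces out of which the triangulated skeleton is built — the vertex set and the skeleta of the annular connected components — and to handle each separately. Write $L \defeq \widehat{K^{\textsf{alg}}}$, so that $\widetilde{L}$ is an algebraic closure of $\widetilde{K}$, and recall from \cite[Proposition 4.10]{MR3204269} that $\textsf{Sk}(\mathfrak{C}^{\textsf{rig}}) = \textsf{Sk}(\mathfrak{C}^{\textsf{rig}}, \mathbb{V}_{\mathfrak{C}})$ with $\mathbb{V}_{\mathfrak{C}} = \textsf{red}_{\mathfrak{C}}^{-1}(\mathfrak{C}^{0}_{\widetilde{K}})$, and likewise over $L$. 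Since each skeleton is the union of its vertex set with the skeleta of the annular pieces of the complement of its vertices, it suffices to prove (i) $\pi^{\textsf{rig},-1}(\mathbb{V}_{\mathfrak{C}}) = \mathbb{V}_{\mathfrak{C}_{L^{\circ}}}$, and (ii) that $\pi^{\textsf{rig}}$ matches the skeleta of the annular components on the two sides.

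For (i) I would invoke the compatibility of reduction maps with base change: the square formed by $\textsf{red}_{\mathfrak{C}_{L^{\circ}}}$, $\textsf{red}_{\mathfrak{C}}$, the generic-fibre map $\pi^{\textsf{rig}}$, and the special-fibre map $\pi_{s} : |\mathfrak{C}_{L^{\circ}}| \to |\mathfrak{C}|$ commutes. Now $\mathfrak{C}_{L^{\circ},\widetilde{L}} = \mathfrak{C}_{\widetilde{K}} \times_{\widetilde{K}} \widetilde{L}$ and $\pi_{s}$ is the projection; as the base change of the integral, faithfully flat extension $\widetilde{K} \hookrightarrow \widetilde{L}$ it is integral, faithfully flat and has $0$-dimensional fibres, so the preimage of the set of generic points $\mathfrak{C}^{0}_{\widetilde{K}}$ is exactly the set of generic points $\mathfrak{C}^{0}_{\widetilde{L}}$. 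Chasing the commutative square then yields
\[
\pi^{\textsf{rig},-1}(\mathbb{V}_{\mathfrak{C}}) = \textsf{red}_{\mathfrak{C}_{L^{\circ}}}^{-1}\big(\pi_{s}^{-1}(\mathfrak{C}^{0}_{\widetilde{K}})\big) = \textsf{red}_{\mathfrak{C}_{L^{\circ}}}^{-1}(\mathfrak{C}^{0}_{\widetilde{L}}) = \mathbb{V}_{\mathfrak{C}_{L^{\circ}}}.
\]

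For (ii), because (i) identifies the vertex sets, $\pi^{\textsf{rig}}$ carries $\mathfrak{C}_{L}^{\textsf{rig}} \setminus \mathbb{V}_{\mathfrak{C}_{L^{\circ}}}$ onto $\mathfrak{C}^{\textsf{rig}} \setminus \mathbb{V}_{\mathfrak{C}}$ and sends connected components into connected components. Each component downstairs is a virtual open disk or virtual open annulus, and since $L$ is algebraically closed its preimage is a disjoint union of standard open disks, resp. standard open annuli. The disk components contribute nothing to either skeleton by \cite[Lemma 3.4]{MR3204269}, so the question localizes to a single standard open annulus $\mathbb{B} = \mathbb{B}(0,a,1)$ with coordinate $T$ and its base change $\mathbb{B}_{L}$, where I must show $p^{-1}(\textsf{Sk}(\mathbb{B})) = \textsf{Sk}(\mathbb{B}_{L})$ for the projection $p$. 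Here I would use that $\textsf{Sk}(\mathbb{B})$ is precisely the set of monomial points $x_{r}$, $a<r<1$, that $x \mapsto |T|_{x}$ is preserved by $p$, and that restricting the radius-$r$ monomial seminorm over $L$ to the $K$-algebra returns the radius-$r$ monomial seminorm, so $p(x_{r}^{(L)}) = x_{r}^{(K)}$; conversely a direct computation of the fibre $p^{-1}(x_{r}^{(K)}) = \mathscr{M}(\mathscr{H}(x_{r}^{(K)}) \hat\otimes_{K} L)$ shows it consists only of monomial points, because passing from $\widetilde{K}$ to the algebraically closed $\widetilde{L}$ keeps the residue field a rational function field of transcendence degree one (in the type-$3$ case, passing from $|K^{\times}|$ to $|L^{\times}|$ keeps the value group of rank one).

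The main obstacle is exactly this annulus computation in (ii): controlling the entire fibre $p^{-1}(x_{r}^{(K)})$ so as to rule out any non-monomial (residue-disk) point of $\mathbb{B}_{L}$ mapping to $x_{r}^{(K)}$, together with the bookkeeping needed to pass from the standard annulus to the \emph{virtual} annuli that actually occur over the non-closed field $K$. For the virtual case I would descend through a finite Galois extension $K'/K$ over which the component becomes a disjoint union of standard annuli — the Galois group permutes these annuli and their monomial points, and the skeleton of the virtual annulus is by definition the image of their skeleta — and then combine Galois descent for $K'/K$ with the standard-annulus analysis above, noting that $\widehat{K^{\textsf{alg}}}$ contains such a $K'$ and is itself algebraically closed. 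An alternative, more structural route to (ii) is to write each skeleton as the image of the canonical retraction attached to the semistable model and to invoke the base-change compatibility of that retraction; but this compatibility rests on the same annular analysis, so I would carry out the concrete computation directly.
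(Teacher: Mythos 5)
The paper states this lemma without any proof, so there is nothing to compare against line by line; judging your argument on its own merits: the decomposition into (i) vertex sets and (ii) annular pieces is the natural strategy, and step (i) is complete and correct — the reduction-map square commutes with base change, and $\mathfrak{C}_{\widetilde{L}} \to \mathfrak{C}_{\widetilde{K}}$ is integral, faithfully flat with zero-dimensional fibres, so generic points pull back to exactly the generic points. The reduction of (ii) to a single standard annulus is also reasonable, though in the virtual-annulus descent you should invoke explicitly that $C_{\widehat{K^{\textsf{alg}}}} \to C$ is a topological quotient by the Galois action (the paper cites this elsewhere as \cite[Theorem 1.3.6]{MR4705884}): that is what guarantees that the preimage of the image of the Galois-stable union of skeleta is that union itself, rather than something larger.

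The genuine gap is in the converse inclusion of your annulus computation, where you conflate ``non-monomial'' with ``residue-disk'' points. What must be ruled out is any point $y \in \mathbb{B}_{L} \setminus \textsf{Sk}(\mathbb{B}_{L})$ mapping to $x_{r}^{(K)}$; but the complement of $\textsf{Sk}(\mathbb{B}_{L})$ is a union of open residue disks $\{|T-b| < |b|\}$, $b \in L$, and these contain many \emph{monomial} points — for instance the maximal point of any smaller closed sub-disk is of type $2$. Your transcendence-degree/rational-rank computation shows only that every point of the fibre $p^{-1}(x_{r}^{(K)})$ is Abhyankar, which such points also are, so it does not exclude them. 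The statement you actually need is stronger: the fibre over $x_{r}^{(K)}$ is exactly $\{x_{r}^{(L)}\}$. This is true, but the proof uses the algebraicity of $\widetilde{L}|\widetilde{K}$ in a different way. Suppose $y \notin \textsf{Sk}(\mathbb{B}_{L})$, so $|T-b|_{y} < |b| = |T|_{y} \defeq r$ for some $b \in L$; this only matters when $r \in |L^{\times}|$ (type $2$), since a type-$3$ circle over $L$ is a singleton and there is nothing to prove. Choose $n \geq 1$ and $c \in K^{\times}$ with $|c| = r^{n}$, and set $u \defeq T^{n}/c$. Then $|u - b^{n}/c|_{y} < 1$, so the reduction of $u$ in $\widetilde{\mathscr{H}(y)}$ equals that of $b^{n}/c$, which lies in $\widetilde{L}$ and is therefore algebraic over $\widetilde{K}$; whereas the reduction of $u$ at $x_{r}^{(K)}$ is transcendental over $\widetilde{K}$. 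Taking $h \in K^{\circ}[s]$ a monic lift of its minimal polynomial and $f \defeq h(u) \in K[T,T^{-1}]$ gives $|f|_{y} < 1 = |f|_{x_{r}^{(K)}}$, hence $p(y) \neq x_{r}^{(K)}$; and since $|T|_{p(y)} = r$, in fact $p(y) \notin \textsf{Sk}(\mathbb{B})$. This yields $p^{-1}(\textsf{Sk}(\mathbb{B})) \subseteq \textsf{Sk}(\mathbb{B}_{L})$, which together with $p(x_{r}^{(L)}) = x_{r}^{(K)}$ closes step (ii); with that replacement your proof goes through.
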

\begin{remark}\label{Complement decomposition weak form}
    Note that $\pi_{\eta}$ is surjective, we have $\pi_{\eta}(\textsf{Sk}({\mathfrak{C}_{\eta}}_{\widehat{K^{\textsf{alg}}}^{\circ}})^{\textsf{c}})=\textsf{Sk}(\mathfrak{C}_{\eta})^{\textsf{c}}$, thus $\textsf{Sk}(\mathfrak{C}_{\eta})^{\textsf{c}}=\bigcup \pi_{\eta}(\mathbb{D}(0,1)_{\widehat{K^{\textsf{alg}}}})=\bigcup \mathbb{D}_{K}$, but this not a connected component decomposition. 
\end{remark}
\end{situation}

\subsection{The complement of skeleton for higher dimensional $K$-analytic space}
In this subsection, we will show that for a $K$-analytic space of dimension greater than 1 which admits a strictly semi-stable formal model, for every point of the complement of its Berkovich skeleton we may find a relative dimension 1 fibration such that this point is not contained in the skeleton of an analytic curve which is contained inside a fiber. Let's recall that a monomial chart of a $K$-analytic space is a morphism $f: U \longrightarrow \mathbb{G}^{n,\textsf{an}}_{m,K}$ such that $U$ is an analytic domain of $X$ and $f$ has zero dimensional fibers. The key observation is the following result:
\begin{lemma}\label{semistable fiber}
    With the notation in \ref{standard semi-stable model}, for the projections $\textsf{fp}_{i}: \mathbb{G}_{m,K}^{n,\textsf{an}} \longrightarrow \mathbb{G}_{m,K}^{n-1,\textsf{an}} $, where $1 \leq i \leq n$, by forgetting the $i$-th component, consider the following compositions:  $\varphi_{i} \defeq \textsf{fp}_{i} \circ \varphi_{M}$ and $g_{i} \defeq \varphi_{i} \circ \widehat{g}_{\eta}$, then the induced morphisms on the fibers have :
    \begin{enumerate}
        \item $\varphi_{M,x,i}: \varphi_{i}^{-1}(x) \longrightarrow \mathbb{G}_{m,\mathscr{H}(x)}^{1,\textsf{an}}$ is an analytic locally monomial chart such that $\varphi^{-1}_{M,x,i}(\textsf{Sk}(\mathbb{G}_{m,\mathscr{H}(x)}^{1,\textsf{an}}))$ consists all monomial points of $\varphi_{i}^{-1}(x)$ for any $x \in \mathbb{G}_{m, K}^{n-1,\textsf{an}} $.
        \item Analytic locally, $\widehat{g}_{\eta}: g^{-1}_{i}(x) \longrightarrow \varphi^{-1}_{i}(x)$ is a generic fiber of \'etale morphism of strictly semi-stable formal schemes. 
    \end{enumerate}
    \begin{proof}
        \begin{enumerate}
            \item Without loss of generality, let's assume $i=n$, note that 
            \begin{equation*}
                M=\{x \in \mathbb{G}^{n, \textsf{an}}_{m,K}\,|\,|T_{1} \cdots T_{n}|_{x} \geq |\varpi| ; |T_{i}|_{x} \leq 1 \,\,\text{for}\,\, 1 \leq i \leq n \}
            \end{equation*}
 we have 
            \begin{align*}
                \textsf{fp}_{n}(M)&=\bigcup_{t}\{x \in \mathbb{G}^{n-1, \textsf{an}}_{m,K}\,|\,|T_{1} \cdots T_{n-1}|_{x} \geq \frac{|\varpi|}{t} ; |T_{i}|_{x} \leq 1\,\,\text{for}\,\, 1 \leq i \leq n-1 \} 
            \end{align*} where $t=|T_{n}|_{x}$ for $x \in M $.
            Note that $\frac{|\varpi|}{t}$ does not necessarily belong to the divisible value group $\sqrt{|K^{\times}|}$. Still, we can always take the union of strictly affinoid subdomains in the form of $\mathcal{U} \defeq \bigcup_{\alpha} \mathcal{U}_{\alpha} = \bigcup_{\alpha}\{x \in \mathbb{G}^{n-1, \textsf{an}}_{m,K}\,|\,|T_{1} \cdots T_{n-1}|_{x} \geq |\varpi_{\alpha}| ; |T_{i}|_{x} \leq 1\,\,\text{for}\,\, 1 \leq i \leq n-1 \} $ to cover $\textsf{fp}_{n}(M)$ such that $\varpi_{\alpha} \in K^{\times}$ and $|\varpi_{\alpha}| \leq 1$. Meanwhile, we have $M=\bigcup_{\alpha} \textsf{fp}^{-1}_{n}(\mathcal{U}_{\alpha}) \cap M =\bigcup_{\alpha}\{x \in \mathbb{G}^{n, \textsf{an}}_{m,K}\,|\,|T_{1} \cdots T_{n-1}|_{x} \geq |\varpi_{\alpha}| ; |T_{i}|_{x} \leq 1\,\,\text{for}\,\, 1 \leq i \leq n; |T_n|_{x} \geq |\varpi  \varpi_{\alpha}^{-1}| \}$. Thus
              \begin{align*}
                \varphi_{n}^{-1} (x) & \cong \mathcal{M}\Big(\frac{K\llangle X_{0}, \dots, X_{n} \rrangle}{(X_{0}\cdots X_{n}-\varpi)}\Big) \times_{\mathbb{G}_{m,K}^{n-1,\textsf{an}}}  \mathcal{M}(\mathscr{H}(x))  \\  & \cong \mathcal{M}\Big(\frac{K\llangle X_{0}, \dots, X_{n} \rrangle}{(X_{0}\cdots X_{n}-\varpi)}\Big) \times_{\mathcal{U}}  \mathcal{M}(\mathscr{H}(x))  \\ & \cong \bigcup_{\alpha} (\textsf{fp}^{-1}_{n}(\mathcal{U}_{\alpha}) \cap M ) \times_{\mathcal{U}_{\alpha}} \mathcal{M}\big(\mathscr{H}(x)\big)\\ & \cong \bigcup_{\alpha} \mathcal{M}\Big(\frac{K\llangle X_{0},\dots,X_{n-1}, X_{n}, \varpi  \varpi^{-1}_{\alpha} X^{-1}_{n} \rrangle}{(X_{0}\cdots X_{n-1} - \varpi_{\alpha})}\Big) \times_{\mathcal{U}_{\alpha}} \mathcal{M}\big(\mathscr{H}(x)\big)
                \\ &  \cong \bigcup_{\alpha} \mathcal{M} \big(\mathscr{H}(x)\llangle T, \varpi  \varpi^{-1}_{\alpha} T^{-1 }\rrangle\big) \\ & \cong \bigcup_{\alpha} \mathbb{B}(0; |\varpi\varpi^{-1}_{\alpha}|,1)_{\mathscr{H}(x)}
            \end{align*} 
            Note the morphism $\varphi_{M,x,n}$ is flat. Moreover, by \cite[2.3]{MR3204269}

            \begin{align*}
                \varphi^{-1}_{M,x,n}(\textsf{Sk}(\mathbb{G}_{m,\mathscr{H}(x)}^{1,\textsf{an}})) & =\bigcup_{\alpha}\textsf{Sk}\Big(\big(\textsf{Spf}\,\frac{\mathscr{H}(X)^{\circ}\llangle X_{0},X_{1} \rrangle}{(X_{0}X_{1}-\varpi  \varpi_{\alpha}^{-1})}\big)_{\eta}\Big) \\ & =
                \bigcup_{\alpha} \textsf{Sk}\big(\mathbb{B}(0; |\varpi\varpi^{-1}_{\alpha}|,1)_{\mathscr{H}(x)}\big) \\ & =  \varphi^{-1}_{n}(x)^{\textsf{mon}}
            \end{align*}
            where the last equality comes from \cite[7.2.7.]{MR3702313}

            \item Since $g_{n}^{-1}(x) \cong \widehat{\mathcal{V}}_{\eta} \times_{\mathbb{G}^{n-1,\textsf{an}}_{m,K}} \mathcal{M}\big(\mathscr{H}(x)\big) \cong \bigcup_{\alpha} \big(\widehat{\mathcal{V}}_{\eta} \times_{M} ({\textsf{fp}^{-1}_{n}(\mathcal{U}_{\alpha}}) \cap M )\big)\times_{\mathcal{U}_{\alpha}} \mathcal{M}\big(\mathscr{H}(x)\big) $, now we take:
            \begin{equation}
               \begin{tikzcd}[cramped, sep=scriptsize]
               \mathfrak{g}_{n,\alpha ,x}: \mathfrak{G}_{n,\alpha ,x} \defeq \big(\widehat{\mathcal{V}} \times_{\widehat{\mathcal{M}}} \textsf{Spf}\frac{K^{\circ}\llangle X_{0},\dots,X_{n-1}, X_{n}, \varpi  \varpi^{-1}_{\alpha} X^{-1}_{n} \rrangle}{(X_{0}\cdots X_{n-1} - \varpi_{\alpha})}\big) \times_{\textsf{Spf}\,\frac{\mathscr{H}(X)^{\circ}\llangle X_{0},\dots,X_{n-1} \rrangle}{(X_{0} \cdots X_{n-1}-\varpi_{\alpha})}} \textsf{Spf}\,\mathscr{H} (x)^{\circ} \arrow[d,"\textsf{\'et}"] & \\ \textsf{Spf}\,\frac{\mathscr{H}(X)^{\circ}\llangle X_{0},X_{1} \rrangle}{(X_{0}X_{1}-\varpi  \varpi_{\alpha}^{-1})}
               \end{tikzcd}
            \end{equation}
            Then $\mathfrak{g}_{n,\alpha ,x}$ is \'etale and $({\widehat{g}_{\eta})_{x}}|_{g^{-1}_{n}(\mathcal{U}_{\alpha})}=({\mathfrak{g}_{n,\alpha ,x}})_{\eta}$, thus $\mathfrak{G}_{n,\alpha ,x}$ is strictly semi-stable. In this case, we have $$\textsf{Sk}(({\mathfrak{G}}_{n,\alpha ,x})_{\eta})=(\mathfrak{g}_{n,\alpha ,x})_{\eta}^{-1}(\textsf{Sk}\big(\mathbb{B}(0; |\varpi\varpi^{-1}_{\alpha}|,1)_{\mathscr{H}(x)}\big))$$ and  $g^{-1}_{n}(x)^{\textsf{mon}}=\bigcup_{\alpha} \textsf{Sk}((\mathfrak{G}_{n,\alpha ,x})_{\eta})$ by \cite[Theorem 5.2.]{MR1702143}

        \end{enumerate}
    \end{proof}
\end{lemma}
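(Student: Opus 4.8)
The plan is to reduce the statement to an explicit computation on the standard torus model $\mathcal{M}$ and then transport the conclusion along the \'etale map $\widehat{g}$ by base change. For part (1), I would begin by writing the affinoid domain $M \subset \mathbb{G}^{n,\textsf{an}}_{m,K}$ underlying $\varphi_{\mathcal{M}}$ explicitly as the locus $\{|T_{1}\cdots T_{n}|_{x} \geq |\varpi|,\ |T_{i}|_{x} \leq 1\}$, and computing its image under the projection $\textsf{fp}_{n}$ that forgets the last coordinate. The subtlety I anticipate is that the natural inner radius of the fiber, namely $|\varpi|/t$ with $t = |T_{n}|_{x}$, need not lie in the divisible value group $\sqrt{|K^{\times}|}$, so the fiber cannot be cut out over $K$ by a single affinoid inequality. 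To circumvent this I would cover $\textsf{fp}_{n}(M)$ by strictly affinoid subdomains $\mathcal{U}_{\alpha} \defeq \{|T_{1}\cdots T_{n-1}|_{x} \geq |\varpi_{\alpha}|,\ |T_{i}|_{x} \leq 1\}$ defined using honest elements $\varpi_{\alpha} \in K^{\times}$ with $|\varpi_{\alpha}| \leq 1$. Pulling back each $\mathcal{U}_{\alpha}$ and taking the fiber over $x$, a direct Tate-algebra identification gives $\varphi_{n}^{-1}(x) \cong \bigcup_{\alpha} \mathbb{B}(0; |\varpi\varpi_{\alpha}^{-1}|, 1)_{\mathscr{H}(x)}$, with the monomial coordinate realizing each piece as a closed annulus inside $\mathbb{G}_{m,\mathscr{H}(x)}^{1,\textsf{an}}$. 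Since the skeleton of an annulus is its canonical spine and the monomial chart $\varphi_{M,x,n}$ carries this spine onto $\textsf{Sk}(\mathbb{G}_{m,\mathscr{H}(x)}^{1,\textsf{an}})$, the preimage $\varphi_{M,x,n}^{-1}(\textsf{Sk}(\mathbb{G}_{m,\mathscr{H}(x)}^{1,\textsf{an}}))$ is exactly the union of annulus skeleta, which glue to a skeleton of $\varphi_{n}^{-1}(x)$; here I would invoke the annulus-skeleton description of \cite[2.3]{MR3204269}.

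For part (2), I would base-change the \'etale morphism $\widehat{g}$ of formal schemes along the point $x$, forming the fiber product of $\mathfrak{Y}$ with the formal annulus model $\spf K^{\circ}\llangle X_{0},\dots,X_{n-1},X_{n},\varpi\varpi_{\alpha}^{-1}X_{n}^{-1}\rrangle/(X_{0}\cdots X_{n-1}-\varpi_{\alpha})$ over $\spf\mathscr{H}(x)^{\circ}$, obtaining $\mathfrak{G}_{n,\alpha,x}$. I would check that the induced map $\mathfrak{g}_{n,\alpha,x}$ to the standard semi-stable annulus model $\spf\mathscr{H}(x)^{\circ}\llangle X_{0},X_{1}\rrangle/(X_{0}X_{1}-\varpi\varpi_{\alpha}^{-1})$ is \'etale, \'etaleness being stable under base change, so that $\mathfrak{G}_{n,\alpha,x}$ is itself strictly semi-stable, and that its Berthelot generic fiber recovers $(\widehat{g})_{x}^{\textsf{rig}}$ over each $\mathcal{U}_{\alpha}$. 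The compatibility of Berkovich skeleta with \'etale morphisms of polystable formal schemes \cite[Theorem 5.2.]{MR1702143} then yields $\textsf{Sk}(\mathfrak{G}^{\textsf{rig}}_{n,\alpha,x}) = \mathfrak{g}_{n,\alpha,x}^{\textsf{rig},-1}(\textsf{Sk}(\mathbb{B}(0;|\varpi\varpi_{\alpha}^{-1}|,1)_{\mathscr{H}(x)}))$, and gluing over $\alpha$ produces the skeleton of the full fiber $g_{n}^{-1}(x)$.

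The step I expect to be the main obstacle is the value-group issue in part (1): because $|\varpi|/t$ is generically irrational over $|K^{\times}|$, one cannot describe the fiber annulus by a single $K$-affinoid inequality, and the argument hinges on the covering by rational $\varpi_{\alpha}$ together with the verification that the resulting annulus skeleta agree on overlaps and glue to a genuine skeleton. Relatedly, one must confirm that the flat map $\varphi_{M,x,n}$ really is a monomial chart in the required analytic-local sense, i.e.\ that the monomial coordinate pulls back the toric skeleton correctly on each affinoid piece. Everything else, namely the \'etale base change, the stability of semi-stability, and the functoriality of skeleta, is formal once this local picture is in place.
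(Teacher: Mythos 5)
Your proposal follows essentially the same route as the paper's proof: the same explicit description of $M$, the same resolution of the value-group obstruction via a covering by strictly affinoid subdomains $\mathcal{U}_{\alpha}$ with $\varpi_{\alpha} \in K^{\times}$, the same identification of the fiber as a union of annuli $\mathbb{B}(0;|\varpi\varpi_{\alpha}^{-1}|,1)_{\mathscr{H}(x)}$ using \cite[2.3]{MR3204269}, and the same \'etale base-change argument with \cite[Theorem 5.2.]{MR1702143} for part (2). The argument is correct and matches the paper's proof in both structure and the key technical points.
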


 \subsection{The Complement of Skeleton}
\begin{situation}
     We start with the torus model $\mathbb{G}^{n,\textsf{an}}_{{m},K}=\textsf{Spec}^{\textsf{an}}\,K[t_{1}^{\pm},t_{2}^{\pm},\dots,t_{n}^{\pm}] $, we have $\textsf{Sk}(\mathbb{G}^{n,\textsf{an}}_{{m},K})$ is homeomorphic to $\bigtimes^{n}_{i=1} \textsf{Sk}(\mathbb{G}^{1,\textsf{an}}_{{m},K})$, where the skeleton of analytic torus $\mathbb{G}^{n,\textsf{an}}_{m,K}$ consists of generalized Gauss valuations which are defined as follows:
     Any generalized Gauss valuation $|\!|-|\!|_{x}$ associated with $\underline{r}=(r_{1},r_{2},\dots,r_{n}) \in \mathbb{R}^{n}_{>0}$ on $K[\underline{t}]$ is given by the formula:
     \begin{equation*}
         |\!|\sum_{i \in \mathbb{N}^{n}}a_{i}\underline{t}^{i}|\!|_{x} = \textsf{max}_{i}\{|a_{i}|\prod^{n}_{j=1}r^{i_{j}}_{j}\}
     \end{equation*}
     Then we have $\textsf{Sk}(\mathbb{G}^{n,\textsf{an}}_{{m},K})^{\textsf{c}}=\mathbb{G}^{n,\textsf{an}}_{{m},K}\setminus \Big(\bigtimes^{n}_{i=1} \textsf{Sk}(\mathbb{G}^{1,\textsf{an}}_{{m},K})\Big) $.
     Now we consider the projection maps $\textsf{fp}_{i}: \mathbb{G}^{n,\textsf{an}}_{{m},K} \longrightarrow \mathbb{G}^{n-1,\textsf{an}}_{{m},K} $ for $1 \leq i \leq n$,  and we have $\textsf{fp}_{i}: \textsf{Sk}(\mathbb{G}^{n,\textsf{an}}_{{m},K}) \longrightarrow \textsf{Sk}(\mathbb{G}^{n-1,\textsf{an}}_{{m},K})$. Now we show the following lemma:
     \begin{lemma}\label{the complement of skeleton of torus}
         The complement of skeleton $\textsf{Sk}(\mathbb{G}^{n,\textsf{an}}_{{m},K})^{\textsf{c}}$  is a union of open virtual disks.
     \end{lemma}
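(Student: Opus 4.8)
The plan is to build the decomposition fiberwise along the projections $\textsf{fp}_{i}$. Fix $i$ and a point $x \in \mathbb{G}^{n-1,\textsf{an}}_{m,K}$. Since $\textsf{fp}_{i}$ is a (trivial) $\mathbb{G}_m$-bundle, it is smooth of relative dimension $1$, and Lemma \ref{semistable fiber}(1) identifies the fiber $\textsf{fp}_{i}^{-1}(x)$ analytically with $\mathbb{G}^{1,\textsf{an}}_{m,\mathscr{H}(x)}$, its skeleton being the pullback of $\textsf{Sk}(\mathbb{G}^{1,\textsf{an}}_{m,\mathscr{H}(x)})$. By the curve results recalled in \S\ref{complement of sk for curve}, together with Remark \ref{Complement decomposition weak form} to handle the non-algebraically closed residue field $\mathscr{H}(x)$, each connected component of $\textsf{fp}_{i}^{-1}(x)\setminus \textsf{Sk}(\textsf{fp}_{i}^{-1}(x))$ is an open unit disk, i.e. an open fiber disk in the sense of \ref{Open fiber disk}. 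Thus the whole argument reduces to the set-theoretic identity
\[
\textsf{Sk}(\mathbb{G}^{n,\textsf{an}}_{m,K}) \;=\; \bigcap_{i=1}^{n}\bigl\{\, z \;:\; z \in \textsf{Sk}\bigl(\textsf{fp}_{i}^{-1}(\textsf{fp}_{i}(z))\bigr)\,\bigr\}.
\]
Granting this, a point lies off the global skeleton if and only if it lies off the $i$-th fiber skeleton for some $i$, hence if and only if it lies in one of the open fiber disks above; and every such disk avoids its own fiber skeleton, so by the identity it avoids $\textsf{Sk}(\mathbb{G}^{n,\textsf{an}}_{m,K})$ as well. The union of all these disks is therefore exactly $\textsf{Sk}(\mathbb{G}^{n,\textsf{an}}_{m,K})^{\textsf{c}}$.

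To establish the identity I would translate skeleton membership into valuation theory. Writing $v=-\log|-|_{z}$ for the valuation attached to $z$ on $K(t_{1},\dots,t_{n})$ and $r_{i}=v(t_{i})$, the point $z$ lies in $\textsf{Sk}(\mathbb{G}^{n,\textsf{an}}_{m,K})$ precisely when $v$ is the monomial (Gauss) valuation
\[
v\Bigl(\sum_{\alpha} a_{\alpha} t^{\alpha}\Bigr) \;=\; \min_{\alpha}\bigl(v(a_{\alpha}) + \langle \alpha, r\rangle\bigr),
\]
while $z$ lies in the $i$-th fiber skeleton precisely when $v$ is monomial in the single variable $t_{i}$ over the restriction $v|_{K(t_{1},\dots,\widehat{t_{i}},\dots,t_{n})}$; a density argument passes between this field and its completion $\mathscr{H}(\textsf{fp}_{i}(z))$. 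The forward inclusion is then immediate, since the global monomial formula specializes to the one-variable monomial formula in each $t_{i}$; this also re-proves the stated fact that $\textsf{fp}_{i}$ carries $\textsf{Sk}(\mathbb{G}^{n,\textsf{an}}_{m,K})$ into $\textsf{Sk}(\mathbb{G}^{n-1,\textsf{an}}_{m,K})$ compatibly with the product description $\textsf{Sk}(\mathbb{G}^{n,\textsf{an}}_{m,K})=\bigtimes_{i}\textsf{Sk}(\mathbb{G}^{1,\textsf{an}}_{m,K})$.

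The substantive direction, that monomiality in each variable separately forces global monomiality, I would prove by induction on $n$, the case $n=1$ being tautological. The crucial point is that monomiality of $v$ in $t_{i}$ over the larger field $K(t_{1},\dots,\widehat{t_{i}},\dots,t_{n})$ restricts verbatim to monomiality over the smaller field $K(t_{1},\dots,\widehat{t_{i}},\dots,t_{n-1})$, because the coefficients appearing already lie in the smaller field. Hence $v'\defeq v|_{K(t_{1},\dots,t_{n-1})}$ is again monomial in each of $t_{1},\dots,t_{n-1}$ separately, so by the inductive hypothesis $v'$ is the monomial valuation on $K(t_{1},\dots,t_{n-1})$; feeding this into monomiality of $v$ in $t_{n}$ over $v'$ recovers the full monomial formula and closes the induction.

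I expect the main obstacle to be not the valuation-theoretic core, which is elementary once the fiber-skeleton dictionary is in place, but the geometric bookkeeping: verifying that the components of $\textsf{fp}_{i}^{-1}(x)\setminus\textsf{Sk}(\textsf{fp}_{i}^{-1}(x))$ over the non-algebraically closed fields $\mathscr{H}(x)$ are genuinely open fiber disks in the precise sense of \ref{Open fiber disk} (possibly virtual, becoming standard disks only after a finite extension, as in Remark \ref{Complement decomposition weak form}), and that the density passage between $K(t_{1},\dots,\widehat{t_{i}},\dots,t_{n})$ and its completion $\mathscr{H}(\textsf{fp}_{i}(z))$ faithfully matches fiber-skeleton membership with one-variable monomiality.
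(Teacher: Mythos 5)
Your proof is correct, but it takes a genuinely different route from the paper's. The paper argues by induction on $n$ through the splitting $\mathbb{G}^{n,\textsf{an}}_{m,K}\cong\mathbb{G}^{n-1,\textsf{an}}_{m,K}\times_K\mathbb{G}^{1,\textsf{an}}_{m,K}$, with a case analysis on whether the two projections of a point land in the skeletons of the factors: in the case it treats, it passes to the fiber $\textsf{pr}_2^{-1}(\textsf{pr}_2(x))\cong\mathbb{G}^{n-1,\textsf{an}}_{m,\mathscr{H}(\textsf{pr}_2(x))}$, identifies that fiber's skeleton with $\textsf{Sk}(\mathbb{G}^{n-1,\textsf{an}}_{m,K})\times\mathcal{M}(\mathscr{H}(\textsf{pr}_2(x)))$, and applies the inductive hypothesis to the $(n-1)$-dimensional torus over the larger field; the disks are produced along this cascade, and only a closing remark records that each of them ultimately sits in a one-dimensional fiber $\textsf{fp}_i^{-1}(\textsf{fp}_i(x))$. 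You instead work with the $n$ codimension-one projections from the outset and prove the sharper set-theoretic characterization $\textsf{Sk}(\mathbb{G}^{n,\textsf{an}}_{m,K})=\bigcap_{i}\bigl\{z : z\in\textsf{Sk}\bigl(\textsf{fp}_i^{-1}(\textsf{fp}_i(z))\bigr)\bigr\}$ by Gauss-valuation algebra, and then quote the curve results fiberwise. What your route buys: this identity is precisely the statement that the paper's subsequent Lemma \ref{The decomposition of complement of skeleton r=n} actually invokes (that any $x'$ off the skeleton lies off $\textsf{Sk}(\textsf{fp}_i^{-1}(\textsf{fp}_i(x')))$ for some $i$), which in the paper must be read off from the shape of the induction rather than from the stated conclusion; your monomiality core is also elementary and self-contained. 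What the paper's route buys: brevity, since it reuses the product structure of skeletons and the $n=1$ case verbatim and never needs the two-way dictionary between skeleton membership and coordinatewise monomiality.

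One point to tighten: run the monomiality argument at the level of the multiplicative seminorm on $K[t_1^{\pm},\dots,t_n^{\pm}]$ rather than as a valuation on $K(t_1,\dots,t_n)$. Points with nontrivial kernel (e.g.\ rigid points) are not birational, so $|-|_z$ does not extend to the function field; but membership in the $i$-th fiber skeleton still forces the restriction formula $|f|_z=\max_k |c^{(i)}_k(f)|_{\textsf{fp}_i(z)}\,|t_i|_z^{k}$ on Laurent polynomials, where $f=\sum_k c^{(i)}_k(f)\,t_i^{k}$ is the $t_i$-expansion. Your induction then shows that a point lying in every fiber skeleton agrees with the Gauss point on all Laurent polynomials, hence equals it, because bounded multiplicative seminorms on an affinoid algebra are $1$-Lipschitz and Laurent polynomials are dense; in particular such a point is automatically birational, so the kernel issue disappears. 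The remaining details you flag, namely the virtual disks over the non-algebraically closed fields $\mathscr{H}(x)$ (Remark \ref{Complement decomposition weak form}) and the density of $K(t_1,\dots,\widehat{t_i},\dots,t_n)$ in $\mathscr{H}(\textsf{fp}_i(z))$, are the same ones the paper's own proof relies on.
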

     \begin{proof}
            Let's begin by writing $\G_{m,K}^{n,\textsf
            {an}} \cong \G_{m,K}^{n-1,\textsf
            {an}} \times_{K} \G_{m,K}^{1,\textsf
            {an}}$. We denote $\textsf{pr}_{1} \defeq \textsf{fp}_{n}$ and $\textsf{pr}_{2}$ as the projections from $\G_{m,K}^{n,\textsf{an}}$ to $\G_{m,K}^{n-1,\textsf{an}}$ and $\G_{m,K}^{1,\textsf{an}}$ respectively. When $n=1$, this is a result of \cite[Lemma 2.12.]{MR3204269}. We proceed by induction on $n$. Assuming the complement of skeleton $\textsf{Sk}(\mathbb{G}^{n-1,\textsf{an}}_{{m}, K})^{\textsf{c}}$ is a union of open virtual disks, we claim that that a point belongs to $\textsf{Sk}(\mathbb{G}^{n,\textsf{an}}_{m,K})$ if and olny if its projection belongs to $\textsf{Sk}(\mathbb{G}^{n-1,\textsf{an}}_{m,K})$ and the point belongs to the skeleton of its fiber.  
Let $x \in \textsf{Sk}(\mathbb{G}^{n,\textsf{an}}_{{m}, K})^{\textsf{c}}$, then we will show our claim by testing whether $\{\textsf{pr}_{i}(x)\}_{i}$ belongs to the skeletons of the images of $\{\textsf{pr}_{i}\}_{i}$ or not. Without loss of generality, we only discuss the following case: If $\textsf{pr}_{1}(x) \in \textsf{Sk}(\mathbb{G}^{n-1,\textsf{an}}_{{m},K})$ and $\textsf{pr}_{2}(x) \in \textsf{Sk}(\mathbb{G}^{1,\textsf{an}}_{{m},K}) $, then the fiber $\textsf{pr}^{-1}_{2}(\textsf{pr}_{2}(x)) \cong \mathbb{G}^{n-1,\textsf{an}}_{{m},\mathscr{H}(\textsf{pr}_{2}(x)) }$ we claim that $x \notin \textsf{Sk}(\textsf{pr}^{-1}_{2}(\textsf{pr}_{2}(x)))$. This can be seen by noting that $\textsf{Sk}(\textsf{pr}^{-1}_{2}(\textsf{pr}_{2}(x)))=\textsf{Sk}(\mathbb{G}^{n-1,\textsf{an}}_{{m},K}) \times \mathcal{M}(\mathscr{H}(\textsf{pr}_{2}(x)))$, if $x \in \textsf{Sk}(\textsf{pr}^{-1}_{2}(\textsf{pr}_{2}(x)))$, let $f=\sum_{i \in \mathbb{N}^{n}}a_{i}\underline{t}^{i}=\sum_{j \in \mathbb{N}^{n-1}}b_{j}(t_{n})\underline{t}^{j} \in K[\underline{t}]=K[t_{n}][t_{1}\cdots t_{n-1}]$ where $i=(i_{1},i_{2},\dots,i_{n})$, $j=(j_{1},j_{2},\dots,j_{n-1})$, and $b_{j}(t_{n})=\sum_{s}a_{js}t^{s}_{n} \in K[t_{n}]$. Then we have $$|\!|f|\!|_{x}=\textsf{max}_{j}\{|\!|b_{j}(t_{n})|\!|_{\textsf{pr}_{2}(x)}\underline{r}^{j}\}=\textsf{max}_{j}\{|\!|b_{j}(t_{n})|\!|_{\textsf{pr}_{2}(x)}\prod^{n-1}_{t=1}r^{j_{t}}_{t}\}$$
Meanwhile, we have $$|\!|b_{j}(t_{n})|\!|_{\textsf{pr}_{2}}=\textsf{max}_{s}\{|a_{js}|r^{s}_{n}\}$$ Thus we have:
$$|\!|f|\!|_{x}=\textsf{max}_{i}\{|a_{i}|(\prod^{n-1}_{t=1}r^{j_{t}}_{t})\cdot r^{s}_{n}\}$$ which is a generalized Gauss valuation, contradicts to our assumption. 
Then there is an open virtual disk $\mathbb{D}_{L}$ such that $x \in \mathbb{D}_{L} \subseteq \textsf{Sk}(\mathbb{G}^{n-1,\textsf{an}}_{{m},\mathscr{H}(\textsf{pr}_{2}(x)) })^{\textsf{c}} $ with field extension $L|\mathscr{H}(\textsf{pr}_{2}(x))$. Meanwhile, assume there exists $x_{0} \in \mathbb{D}_{L}  \cap \textsf{Sk}(\mathbb{G}^{n,\textsf{an}}_{{m}, K})$, then $x_{0} \in \textsf{Sk}(\mathbb{G}^{n-1,\textsf{an}}_{{m},\mathscr{H}(\textsf{pr}_{2}(x)) })$, which leads to a contradiction. Thus we have $\textsf{Sk}(\mathbb{G}^{n,\textsf{an}}_{{m},K})^{\textsf{c}}= \bigcup \mathbb{D}_{L}$ with complete non-Archimedean field extension $L|K$. In fact, from the induction procedure, we can see that each disk $\mathbb{D}_{L}$ comes from a certain fiber $\textsf{fp}^{-1}_{i}(\textsf{fp}_{i}(x))$.

     \end{proof} 
\end{situation}

\begin{lemma}\label{The decomposition of complement of skeleton r=n}
    For a strictly semi-stable scheme $\mathcal{X}$ such that $r=n$ in \ref{standard semi-stable model}, the complement of Berkovich skeleton $\textsf{Sk}({\widehat{\mathcal{X}}}_{\eta})^{\textsf{c}}$ is a union of open virtual disks.
    \begin{proof}
        It suffices to study $\textsf{Sk}({\widehat{\mathcal{V}}}_{\eta})^{\textsf{c}}$, then for any $x \in \textsf{Sk}({\widehat{\mathcal{V}}}_{\eta})^{\textsf{c}}$, consider $x' \defeq \varphi_{M} \circ (\widehat{g})_{\eta}(x) \in \textsf{Sk}(\mathbb{G}^{n,\textsf{an}}_{m,K})^{\textsf{c}}$, by  Lemma \ref{the complement of skeleton of torus}, there exists $i$ for $1 \leq i \leq n $ such that $x' \in \textsf{Sk}(\textsf{fp}_{i}^{-1}(\textsf{fp}_{i}(x'))^{\textsf{c}}$, then by Lemma  \ref{semistable fiber}, $x \notin g^{-1}_{i}(\textsf{fp}_{i}(x'))^{\textsf{mon}}$, thus there exists an analytic curve $({\mathfrak{G}}_{n,\alpha ,x})_{\eta}$ such that $x \in \textsf{Sk}(({\mathfrak{G}}_{n,\alpha ,x})_{\eta})^{\textsf{c}}$.
    \end{proof}
\end{lemma}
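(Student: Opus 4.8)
The plan is to derive the statement directly from the two local results already in hand, Lemma~\ref{the complement of skeleton of torus} and Lemma~\ref{semistable fiber}, using the fact that the Berkovich skeleton of a strictly semi-stable formal scheme pulls back along \'etale morphisms. First I would reduce to a single chart of the \'etale covering $\mathscr{V}$. Since $r=n$ forces $m=n$ in Definition~\ref{standard semi-stable model}, the local model is $\widehat{\mathcal{M}}^{\textsf{rig}}=\mathcal{G}(n,n,n)$ with no additional affine factor, and the monomial chart $\varphi_M$ of \eqref{mono-chart-semistable} realizes it as an affinoid domain of $\G^{n,\textsf{an}}_{m,K}$. Because $f$ and $g$ are (open) \'etale and the skeleton is \'etale-local (as invoked in Lemma~\ref{semistable fiber} through \cite[Theorem 5.2]{MR1702143}), they carry $\textsf{Sk}(\widehat{\mathcal{M}}^{\textsf{rig}})$ to $\textsf{Sk}(\widehat{\mathcal{V}}^{\textsf{rig}})$ and to $\textsf{Sk}(\widehat{\mathcal{X}}^{\textsf{rig}})$, so the complements match up and it suffices to produce, for each $x\in\textsf{Sk}(\widehat{\mathcal{V}}^{\textsf{rig}})^{\textsf{c}}$, an open fiber disk through $x$ contained in $\textsf{Sk}(\widehat{\mathcal{V}}^{\textsf{rig}})^{\textsf{c}}$.

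The core step transports $x$ to the torus and back down a fibre. I would set $x' \defeq \varphi_M\circ\widehat{g}^{\textsf{rig}}(x)$; \'etale-compatibility together with $\textsf{Sk}(\mathcal{G}(n,n,n))=\textsf{Sk}(\G^{n,\textsf{an}}_{m,K})\cap\mathcal{G}(n,n,n)$ places $x'$ in $\textsf{Sk}(\G^{n,\textsf{an}}_{m,K})^{\textsf{c}}$. Lemma~\ref{the complement of skeleton of torus}, and in particular the observation in its proof that every disk in the complement issues from a single coordinate fibre, then yields an index $i$ with $x'\in\textsf{Sk}(\textsf{fp}_i^{-1}(\textsf{fp}_i(x')))^{\textsf{c}}$, where $\textsf{fp}_i^{-1}(\textsf{fp}_i(x'))\cong\G^{1,\textsf{an}}_{m,\mathscr{H}(\textsf{fp}_i(x'))}$ is one-dimensional and $x'$ sits in an open disk inside it. Next, Lemma~\ref{semistable fiber}(2) presents $\widehat{g}^{\textsf{rig}}\colon g_i^{-1}(\textsf{fp}_i(x'))\to\varphi_i^{-1}(\textsf{fp}_i(x'))$ as the generic fibre of an \'etale morphism of strictly semi-stable formal curves, so \'etale-compatibility on fibres gives $x\in\textsf{Sk}(g_i^{-1}(\textsf{fp}_i(x')))^{\textsf{c}}$. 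Because $g_i^{-1}(\textsf{fp}_i(x'))$ is a strictly semi-stable curve over $\mathscr{H}(\textsf{fp}_i(x'))$, the curve results recalled in Section~\ref{complement of sk for curve} describe its skeleton complement as a union of open unit disks; the disk through $x$ therefore arises from the relative-dimension-one fibration $g_i$ and is an open fiber disk in the sense of \ref{Open fiber disk}.

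The one point that needs genuine care --- and which I expect to be the main obstacle --- is to upgrade ``$x$ avoids the fibre's skeleton'' to ``the whole disk avoids the skeleton of the total space $\widehat{\mathcal{V}}^{\textsf{rig}}$''. Here I would use the product decomposition $\textsf{Sk}(\G^{n,\textsf{an}}_{m,K})=\textsf{Sk}(\G^{n-1,\textsf{an}}_{m,K})\times\textsf{Sk}(\G^{1,\textsf{an}}_{m,K})$ relative to $\textsf{fp}_i$, and split into two cases according to $\textsf{fp}_i(x')$. If $\textsf{fp}_i(x')\notin\textsf{Sk}(\G^{n-1,\textsf{an}}_{m,K})$, the entire fibre $g_i^{-1}(\textsf{fp}_i(x'))$ misses $\textsf{Sk}(\widehat{\mathcal{V}}^{\textsf{rig}})$, so the containment is automatic. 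If $\textsf{fp}_i(x')\in\textsf{Sk}(\G^{n-1,\textsf{an}}_{m,K})$, then $\textsf{Sk}(\G^{n,\textsf{an}}_{m,K})$ meets the fibre exactly in $\textsf{Sk}(\textsf{fp}_i^{-1}(\textsf{fp}_i(x')))$, and Lemma~\ref{semistable fiber}(1) transports this equality to $\textsf{Sk}(\widehat{\mathcal{V}}^{\textsf{rig}})\cap g_i^{-1}(\textsf{fp}_i(x'))=\textsf{Sk}(g_i^{-1}(\textsf{fp}_i(x')))$, so that the open disk, lying in the complement of the fibre skeleton, also lies in $\textsf{Sk}(\widehat{\mathcal{V}}^{\textsf{rig}})^{\textsf{c}}$. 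Granting this compatibility, letting $x$ range over $\textsf{Sk}(\widehat{\mathcal{V}}^{\textsf{rig}})^{\textsf{c}}$ exhibits the complement as a union of open fiber disks, which is the claim.
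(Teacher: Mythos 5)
Your proposal follows the paper's own proof essentially step for step: reduce to a chart $\mathcal{V}$, transport $x$ to the torus via $x'=\varphi_{M}\circ\widehat{g}^{\textsf{rig}}(x)$, invoke Lemma \ref{the complement of skeleton of torus} to produce the index $i$ with $x'\in\textsf{Sk}(\textsf{fp}_{i}^{-1}(\textsf{fp}_{i}(x')))^{\textsf{c}}$, and invoke Lemma \ref{semistable fiber} to conclude $x\in\textsf{Sk}\big(g_{i}^{-1}(\textsf{fp}_{i}(x'))\big)^{\textsf{c}}$, so that the curve results give the open fiber disk through $x$. Your third paragraph---verifying via the case analysis on $\textsf{fp}_{i}(x')$ that the entire disk, and not merely the point $x$, avoids $\textsf{Sk}(\widehat{\mathcal{V}}^{\textsf{rig}})$---is a correct filling-in of a step the paper leaves implicit (it is the same argument that appears inside the proof of Lemma \ref{the complement of skeleton of torus}), so your write-up is, if anything, slightly more complete than the original.
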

\begin{remark}\label{good reduction case}
    In fact, Lemma \ref{The decomposition of complement of skeleton r=n} still holds for $$\mathcal{M} \defeq \textsf{Spec}\,K^{\circ}{[T_{0},\dots,T_{n}]}/(T_{0} \cdots T_{n}-a)$$ with $|a|=1$. In this case, $\widehat{\mathcal{M}} \cong \textsf{Spf}(K^{\circ}\llangle T^{\pm 1}_{1},\dots T_{n}^{\pm 1}\rrangle)$ and $\widehat{\mathcal{M}}_{\eta} \cong \big(\mathbb{B}(0; 1, 1)_{K}\big)^{n}$ which is a product of generalized annulus and contractible. Consider a fibration similar to we did in Lemma \ref{semistable fiber}, then we get a family of  \'etale morphisms $\{{(\widehat{g}_{\eta})_{L}}: {(\widehat{\mathcal{V}}_{\eta})_{L}} \longrightarrow \mathbb{B}(0; 1, 1)_{L}\}_{L}$ for field extension $L|K$, and we have $(\widehat{g}_{\eta})^{-1}_{L}(\textsf{Sk}(\mathbb{B}(0;1,1))_{L})=\textsf{Sk}(({\widehat{\mathcal{V}}_{\eta})_{L}})$ by \cite[Lemme 2.2.14.]{thuillier:tel-00010990}. The rest part are the same, and we are done.
\end{remark}
\begin{theorem}\label{main theorem 1}
     Let ${{\mathcal{X}}}$ be a strictly semi-stable scheme over complete discrete valuation ring $K^{\circ}$ defined in \ref{standard semi-stable model}, the complement of Berkovich skeleton $\textsf{Sk}({\widehat{\mathcal{X}}}_{\eta})^{\textsf{c}}$ is a union of open virtual disks.
\begin{proof}
Zariski locally, note that $g$ could be factorized as 
\begin{equation*}
    \begin{tikzcd}
        (\mathcal{V},U_{\mathcal{V}}) \arrow[r,"h"] \arrow[rr, bend right=18, "g"] & (\mathbb{A}_{\mathcal{M}}^{s}, U_{\mathbb{A}_{\mathcal{M}}^{n}}) \arrow[r,"\textsf{pr}_{\mathcal{M}}"]  & (\mathcal{M},U_{\mathcal{M}}) 
    \end{tikzcd}
\end{equation*}

  where $h$ is \'etale and $s$ is the relative dimension of $g$. Now let's study the structure of $\textsf{Sk}({\widehat{\mathcal{V}}}_{\eta})^{\textsf{c}}$. We first note that the $K^{\circ}$-scheme ${\mathbb{A}}^{s} \times_{K^{\circ}} {\mathcal{M}}$ is a strictly nondegenerate polystable scheme. More precisely, we have a locally finite covering of open formal subscheme $\{\widehat{\mathcal{W}}\}$ for $\widehat{\mathbb{A}}^{s} \times_{K^{\circ}} \widehat{\mathcal{M}}$ satisfying the following diagram:

\begin{equation}
\begin{tikzcd}
	& {\mathfrak{W}' \defeq \widehat{\mathcal{V}} \times_{\widehat{\mathbb{A}}^{s} \times_{K^{\circ}} \widehat{\mathcal{M}}} \widehat{\mathcal{W}}} \\
	{\widehat{\mathcal{V}}} && {\widehat{\mathcal{W}}} \\
	& {\widehat{\mathbb{A}}^{s} \times_{K^{\circ}} \widehat{\mathcal{M}}} && {\textsf{Spf}(A_{0}) \times \cdots \times \textsf{Spf}(A_{p})}
	\arrow["\textsf{open}", hook', from=1-2, to=2-1]
	\arrow["{\textsf{\'et}}"', from=1-2, to=2-3]
	\arrow["{\textsf{\'et}}"', from=2-1, to=3-2]
	\arrow["\textsf{open}", hook', from=2-3, to=3-2]
	\arrow["{\textsf{\'et}}"', from=2-3, to=3-4]
\end{tikzcd}
\end{equation}
where each $\textsf{Spf}(A_{i})$ is of form $\textsf{Spf}\big(K^{\circ} \llangle T_{0},\dots,T_{n_{i}}\rrangle / (T_{0}\cdots T_{n_{i}}-a)\big)$ for $0 \neq a \in K^{\circ} $. Then through the construction of \cite[4.6.]{MR3479562}, it suffices to study the structure of $\textsf{Sk}({\mathfrak{W}'}_{\eta})^{\textsf{c}}$, but this is a immediate result follows from Lemma \ref{The decomposition of complement of skeleton r=n} and Remark \ref{good reduction case}.

\end{proof}
\end{theorem}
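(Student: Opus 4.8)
The plan is to reduce the general strictly semi-stable case to the two situations already settled, namely the $r=n$ case of Lemma \ref{The decomposition of complement of skeleton r=n} and its good-reduction variant in Remark \ref{good reduction case}, by exhibiting the relevant formal models \'etale-locally as a product of such standard charts. Since the Berkovich skeleton and its complement are computed \'etale-locally and are compatible with the reduction map, I would first pass to the \'etale chart $(\mathcal{V},\mathcal{U}_{\mathcal{V}})$ of Definition \ref{standard semi-stable model}, so that it suffices to describe $\textsf{Sk}(\widehat{\mathcal{V}}^{\textsf{rig}})^{\textsf{c}}$. Here the \'etale map $g\colon(\mathcal{V},\mathcal{U}_{\mathcal{V}})\to(\mathcal{M},\mathcal{U})$ lands in the standard model $\mathcal{M}=\spec K^{\circ}[T_{0},\dots,T_{n}]/(T_{0}\cdots T_{r}-\varpi)$, in which only the first $r+1$ coordinates enter the semi-stable relation while the remaining coordinates are free.

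The key structural step is to factor $g$ Zariski-locally as $(\mathcal{V},\mathcal{U}_{\mathcal{V}})\xrightarrow{h}(\mathbb{A}^{s}_{\mathcal{M}},U)\xrightarrow{\textsf{pr}_{\mathcal{M}}}(\mathcal{M},\mathcal{U})$ with $h$ \'etale and $s$ the relative dimension of $g$, which is possible because any smooth morphism factors \'etale-locally through an affine space over the target. The point of this factorization is that $\mathbb{A}^{s}\times_{K^{\circ}}\mathcal{M}$ is a strictly nondegenerate polystable $K^{\circ}$-scheme, so after completion along the special fiber it admits a locally finite covering by formal opens $\widehat{\mathcal{W}}$, each \'etale over a product $\prod_{i=0}^{p}\textsf{Spf}(A_{i})$ whose factors all have the standard form $A_{i}=K^{\circ}\llangle T_{0},\dots,T_{n_{i}}\rrangle/(T_{0}\cdots T_{n_{i}}-a_{i})$ with $0\neq a_{i}\in K^{\circ}$. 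Pulling $\widehat{\mathcal{W}}$ back along $\widehat{h}$ produces $\mathfrak{W}'=\widehat{\mathcal{V}}\times_{\widehat{\mathbb{A}}^{s}\times_{K^{\circ}}\widehat{\mathcal{M}}}\widehat{\mathcal{W}}$, and by the polystable skeleton construction of \cite[4.6.]{MR3479562} together with the \'etale invariance of skeleta from \cite[Theorem 5.2.]{MR1702143} (as already used in Lemma \ref{semistable fiber}), the complement $\textsf{Sk}(\widehat{\mathcal{V}}^{\textsf{rig}})^{\textsf{c}}$ is recovered from the pieces $\textsf{Sk}(\mathfrak{W}'^{\textsf{rig}})^{\textsf{c}}$.

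Finally I would split into the two possible shapes of the factors. When $|a_{i}|<1$, strict semi-stability forces $a_{i}$ to be a uniformizer up to a unit, so $\textsf{Spf}(A_{i})$ is of the $r=n$ shape covered by Lemma \ref{The decomposition of complement of skeleton r=n}; when $|a_{i}|=1$, the factor is of good-reduction shape, covered by Remark \ref{good reduction case}. Since the polystable skeleton of the product is the product of the skeleta of the factors, the complement of $\textsf{Sk}(\mathfrak{W}'^{\textsf{rig}})$ decomposes into open fiber disks exactly as in the inductive argument of Lemma \ref{the complement of skeleton of torus}: for any point off the product skeleton, at least one coordinate projection lands off the corresponding factor skeleton, and that fibration supplies an open fiber disk through the point.

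I expect the main obstacle to be verifying that this product decomposition of the polystable skeleton is genuinely compatible with the \'etale pullback along $h$, so that the open fiber disks produced on the product chart $\mathfrak{W}'$ descend to honest open fiber disks in $\widehat{\mathcal{V}}^{\textsf{rig}}$. This is precisely where the \'etale invariance recorded in Lemma \ref{semistable fiber} and \cite[Theorem 5.2.]{MR1702143} must be applied with care: one must check that the relative dimension $1$ fibration defining the disk on $\mathfrak{W}'$ transports, under $h$, to a smooth relative dimension $1$ fibration on $\widehat{\mathcal{V}}^{\textsf{rig}}$ whose fiber still contains the image disk as an open subset, and that no part of the skeleton is inadvertently swept into the disk during this transport.
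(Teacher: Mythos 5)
Your proposal follows essentially the same route as the paper's proof: the same Zariski-local factorization of $g$ through $\mathbb{A}^{s}_{\mathcal{M}}$, the same reduction to the strictly nondegenerate polystable charts covered by formal opens $\{\widehat{\mathcal{W}}\}$ étale over products $\textsf{Spf}(A_{0})\times\cdots\times\textsf{Spf}(A_{p})$, the same pullback $\mathfrak{W}'$ along $\widehat{h}$, and the same final appeal to Lemma \ref{The decomposition of complement of skeleton r=n} and Remark \ref{good reduction case}. The compatibility issue you flag at the end, transporting open fiber disks along the étale map, is exactly what the paper delegates to the construction of \cite[4.6.]{MR3479562} together with the étale invariance already recorded in Lemma \ref{semistable fiber}, so there is no substantive difference between the two arguments.
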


\begin{situation}
It's natural to study the structure of the complement of the Berkovich skeleton for a log regular formal scheme as a generalization. Let $\mathcal{X}$ be a proper log regular scheme over $K$ and $\mathfrak{X}$ be a log regular formal model of $\mathcal{X}$ over $K^{\circ}$. In \cite{MR3963489}, the authors show one can embed the Kato fans $\textsf{F}(\mathfrak{X})$ defined in \cite{MR1296725} to $\mathcal{X}^{\textsf{an}}$ as a generalization of Berkovich skeleton of a strictly semi-stable model defined in \cite{MR1702143}.  The elementary example is a generalized semistable family as follows:

\par
Consider $M \defeq \spec\,K^{\circ}[T_{0},\dots,T_{n}]/(T^{e_{0}}_{0}\cdots T^{e_{n}}_{n}-\varpi)$ such that $(e_{i},p)=1$ for $0 \leq i \leq n$, where $p=\textsf{char}(\widetilde{K})$. Then for the following morphism:
    \begin{align*}
        f: \widetilde{M}  \defeq  \spec\,\frac{K^{\circ}[X_{0},\dots,X_{n}]}{(X^{e}_{0}\cdots X^{e}_{n}-\varpi)}  \longrightarrow  \spec\,\frac{K^{\circ}[T_{0},\dots,T_{n}]}{(T^{e_{0}}_{0}\cdots T^{e_{n}}_{n}-\varpi)}
    \end{align*}
    which is induced by $T_{i} \longmapsto X^{\frac{e}{e_{i}}}_{i}$ we have:
    \begin{proposition}\cite[cf.Theorem 2.2.]{MR2923183}\label{saito transformation}
        $f$ is an \textsf{fppf} covering space such that its generic fiber
             $f_{K}$ is a Galois \'etale covering space. Moreover,
             the preimage of $\textsf{Sk}(\widehat{M}_{\eta})$ of $\widehat{f}_{\eta}$ is $\textsf{Sk}((\widehat{\widetilde{M}})_{\eta})$.
        
        \begin{proof}
            \begin{enumerate}
                \item The surjectivity of $f$ is easy to verify on the closed points of restriction of generic fiber and special fiber.
                \item The \'etaleness of $f_{K}$ is standard and $\textsf{Aut}(\widetilde{M}_{K}|M_{K}) \cong \prod_{i} \boldsymbol{\mu}_{\frac{e}{e_{i}}}$.
                \item By the construction of the Berkovich skeleton, we have for any $x \in \textsf{Sk}(\widehat{M}_{\eta})$, we have $|\sum_{I}a_{I}\underline{T}_{I}^{m_{I}}|_{x}=\textsf{max}_{I}|a_{I}|\textsf{exp}(-\sum^{n}_{i=0}m_{i}\cdot v_{i})$ for a fixed $v_{I}=(v_{i})_{0 \leq i \leq n} \in \mathbb{R}_{>0}^{n+1}$ satisfies $\sum_{i}e_{i}\cdot v_{i}=1$ and for $x' \in \textsf{Sk}((\widehat{\widetilde{M}})_{\eta})$, we have $|\sum_{I}a'_{I}\underline{X}_{I}^{r_{I}}|_{x'}=\textsf{max}_{I}|a'_{I}|\textsf{exp}(-\sum^{n}_{i=0}r_{i}\cdot v'_{i})$ for a fixed $v'_{I}=(v'_{i})_{0 \leq i \leq n} \in \mathbb{R}_{>0}^{n+1}$ satisfies $\sum_{i}e \cdot v'_{i}=1$. Then the result is obvious by direct calculation. 
            \end{enumerate}
        \end{proof}
    \end{proposition}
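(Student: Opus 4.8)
The plan is to prove the three assertions separately, treating the fppf/Galois statement by an explicit finite presentation and the skeleton statement by a monomial-valuation computation reinforced with a Galois-averaging argument.

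First I would record the presentation $\mathcal{O}_{\widetilde{M}}=\mathcal{O}_{M}[X_{0},\dots,X_{n}]/(X_{i}^{e/e_{i}}-T_{i}\mid 0\le i\le n)$, which exhibits $\mathcal{O}_{\widetilde{M}}$ as a finite free $\mathcal{O}_{M}$-module (with the evident box-monomial basis) of rank $\prod_{i}(e/e_{i})$. Flatness and finite presentation of $f$ are then immediate, so for the fppf claim it remains to verify surjectivity, which I would check on fibres as indicated: on the special fibre $T_{i}\mapsto X_{i}^{e/e_{i}}$ maps $V(\prod T_{i}^{e_{i}})$ onto $V(\prod X_{i}^{e})$ by extracting roots over $\widetilde{K}$, while on the generic fibre each $T_{i}$ is a unit (their weighted product equals $\varpi\in K^{\times}$), so $M_{K}\cong\gm^{n}$ and $f_{K}$ is the Kummer cover $T_{i}\mapsto X_{i}^{e/e_{i}}$. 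Since each $e/e_{i}$ is prime to $p=\textsf{char}(\widetilde{K})$, hence invertible, this cover is finite \'etale, with deck group $G\defeq\prod_{i}\boldsymbol{\mu}_{e/e_{i}}$ acting by $X_{i}\mapsto\zeta_{i}X_{i}$; its order equals the degree, giving the Galois property and the identification $\textsf{Aut}(\widetilde{M}_{K}\mid M_{K})\cong\prod_{i}\boldsymbol{\mu}_{e/e_{i}}$.

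For the skeleton statement I would parametrise $\textsf{Sk}(\widehat{M}^{\textsf{rig}})$ by its monomial points $\eta_{v}$, indexed by weight vectors $v=(v_{i})\in\mathbb{R}_{>0}^{n+1}$ with $\sum_{i}e_{i}v_{i}=1$ and determined by $|\sum_{I}a_{I}\underline{T}^{m_{I}}|_{\eta_{v}}=\textsf{max}_{I}|a_{I}|\exp(-\sum_{i}m_{i}v_{i})$, and likewise $\textsf{Sk}((\widehat{\widetilde{M}})^{\textsf{rig}})$ by $\eta_{v'}$ with $\sum_{i}e\,v'_{i}=1$. Because $\widehat{f}^{\textsf{rig}}$ satisfies $|T_{i}|_{\widehat{f}^{\textsf{rig}}(x')}=|X_{i}|_{x'}^{e/e_{i}}$, a direct substitution gives $\widehat{f}^{\textsf{rig}}(\eta_{v'})=\eta_{v}$ with $v_{i}=(e/e_{i})v'_{i}$, and $v_{i}=(e/e_{i})v'_{i}$ is a linear bijection between the two weight simplices (since $\sum_{i}e_{i}v_{i}=\sum_{i}e\,v'_{i}=1$). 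This already yields $\textsf{Sk}((\widehat{\widetilde{M}})^{\textsf{rig}})\subseteq(\widehat{f}^{\textsf{rig}})^{-1}(\textsf{Sk}(\widehat{M}^{\textsf{rig}}))$.

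I expect the reverse inclusion to be the main obstacle, since a priori the preimage of the skeleton could be strictly larger than the skeleton: I must show that any $x'$ with $\widehat{f}^{\textsf{rig}}(x')$ monomial is itself monomial. Fixing such an $x'$, the relation $|X_{i}|_{x'}^{e/e_{i}}=|T_{i}|_{\widehat{f}^{\textsf{rig}}(x')}$ forces $x'$ to have the same tropicalisation as $\eta_{v'}$, whence $|g|_{x'}\le|g|_{\eta_{v'}}$ for all $g$ by the ultrametric inequality; the difficulty is the opposite estimate. Here I would use the deck group $G$: the point $x'$ agrees with $\eta_{v'}$ on the invariant subring $f^{\#}\mathcal{O}_{M}$ (on which $\widehat{f}^{\textsf{rig}}(x')$ is monomial), each $\sigma\in G$ scales monomials by norm-one roots of unity so that $|\sigma(g)|_{\eta_{v'}}=|g|_{\eta_{v'}}$, and the norm $N(g)\defeq\prod_{\sigma\in G}\sigma(g)$ is $G$-invariant, hence (the action being diagonal) a polynomial in the $T_{i}$ lying in $f^{\#}\mathcal{O}_{M}$. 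Multiplicativity then gives
\[
\bigl(|g|_{\eta_{v'}}\bigr)^{|G|}=|N(g)|_{\eta_{v'}}=|N(g)|_{x'}=\prod_{\sigma\in G}|\sigma(g)|_{x'}\le\prod_{\sigma\in G}|\sigma(g)|_{\eta_{v'}}=\bigl(|g|_{\eta_{v'}}\bigr)^{|G|},
\]
so equality holds throughout and in particular $|g|_{x'}=|g|_{\eta_{v'}}$ for every $g$; thus $x'=\eta_{v'}$ is monomial. This establishes $(\widehat{f}^{\textsf{rig}})^{-1}(\textsf{Sk}(\widehat{M}^{\textsf{rig}}))\subseteq\textsf{Sk}((\widehat{\widetilde{M}})^{\textsf{rig}})$ and completes the proof, the Galois-averaging step being the crux and the remainder amounting to the bookkeeping of the two monomial parametrisations.
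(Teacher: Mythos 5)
Your proposal is correct, and it is in fact more complete than the paper's own proof. On the fppf/Galois part the two arguments are essentially the same, except that you exhibit $\mathcal{O}_{\widetilde{M}}$ as a finite free $\mathcal{O}_{M}$-module of rank $\prod_{i}(e/e_{i})$, which yields flatness, finiteness and (the rank being positive) surjectivity in one stroke, whereas the paper only remarks that surjectivity can be checked on closed points of the two fibers and leaves flatness implicit. The real divergence is in the skeleton statement: both you and the paper parametrize the skeletons by monomial points $\eta_{v}$ with $\sum_{i}e_{i}v_{i}=1$, resp.\ $\eta_{v'}$ with $\sum_{i}e\,v'_{i}=1$, and compute that $\widehat{f}^{\textsf{rig}}(\eta_{v'})=\eta_{v}$ with $v_{i}=(e/e_{i})v'_{i}$, a bijection of the two weight simplices; but this only gives the inclusion $\textsf{Sk}((\widehat{\widetilde{M}})^{\textsf{rig}})\subseteq(\widehat{f}^{\textsf{rig}})^{-1}(\textsf{Sk}(\widehat{M}^{\textsf{rig}}))$, and the paper's ``the result is obvious by direct calculation'' silently elides the converse inclusion --- that a point mapping to a monomial point is itself monomial --- which is the actual content of the assertion. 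You correctly identify this as the crux and supply a sound proof: the ultrametric inequality gives $|g|_{x'}\le|g|_{\eta_{v'}}$ for every polynomial $g$, and evaluating the $G$-invariant norm $N(g)=\prod_{\sigma}\sigma(g)$, which lies in $f^{\#}\mathcal{O}_{M}$ where $x'$ and $\eta_{v'}$ agree, forces termwise equality in the product (all factors are nonzero for $g\neq 0$), whence $x'=\eta_{v'}$ after noting that two bounded multiplicative seminorms agreeing on the dense subring of polynomials coincide. Your route has the virtue of being uniform in $v$: the natural alternative via the degree count $[\mathscr{H}(\eta_{v'}):\mathscr{H}(\eta_{v})]=\prod_{i}(e/e_{i})$ would require tracking how the ramification index and residue degree split, which depends on the rational relations among the $v_{i}$. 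One caveat you should make explicit: the averaging step uses the deck transformations $X_{i}\mapsto\zeta_{i}X_{i}$, so it requires $\boldsymbol{\mu}_{e/e_{i}}\subseteq K$; if $K$ lacks these roots of unity, first perform the base change to $K(\boldsymbol{\mu}_{e})$, which is unramified since $(e,p)=1$, and use that the skeletons of these models are compatible with unramified base change. The paper makes the same implicit assumption when it writes $\textsf{Aut}(\widetilde{M}_{K}|M_{K})\cong\prod_{i}\boldsymbol{\mu}_{e/e_{i}}$, so this does not count against your argument.
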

\begin{situation}
    Let $G \defeq \prod_{i} \boldsymbol{\mu}_{\frac{e}{e_{i}}}$ as an \'etale abelian sheaf, then we study the $G$-action on $(\widehat{\widetilde{\mathcal{M}}})_{\eta}$ The complement of the skeleton $\textsf{Sk}((\widehat{\widetilde{M}})_{\eta})^{\textsf{c}}$ is $G$-stable. To see this, firstly by \ref{saito transformation}, we have the generic fiber $f_{K}$ of $f$ is a Galois-\'etale covering space and $\mathcal{M}_{K} \cong \widetilde{\mathcal{M}}_{K}/G$. After taking the analytification and Berthelot generic fiber for $f$ and $f_{K}$, we have the following diagram: 

    \begin{equation}\label{generalized semistable base change}
    \begin{tikzcd}[row sep=scriptsize, column sep=scriptsize]
       (\widehat{\widetilde{\mathcal{M}}})_{\eta}  \arrow[r,"\widehat{f}_{\eta}"] \arrow[d,hookrightarrow] & \widehat{\mathcal{M}}_{\eta} \arrow[d,hookrightarrow] & \\
            \widetilde{\mathcal{M}}^{\textsf{an}}_{K} \arrow[r,"f^{\textsf{an}}_{K}"] & \mathcal{M}^{\textsf{an}}_{K}
    \end{tikzcd}
\end{equation}

Now, by \cite[Corollaire 1.1.50.]{MR3381140} and \cite[(0.3.5) Proposition]{berthelot1996cohomologie}, we have $\widehat{f}_{\eta}$ is  is finite, \'etale and surjective.
Thus we have $\widehat{\mathcal{M}}_{\eta} \cong \widehat{\widetilde{\mathcal{M}}}_{\eta}/G$ as the orbit space by \cite[Theorem 1.3.6.]{MR4705884} Furthermore, consider the restriction of $\widehat{f}_{\eta}$ on $\textsf{Sk}(\widehat{\widetilde{\mathcal{M}}}_{\eta})^{\textsf{c}}$, we have $\textsf{Sk}(\widehat{\widetilde{\mathcal{M}}}_{\eta})^{\textsf{c}}/G \cong \textsf{Sk}(\widehat{\mathcal{M}}_{\eta})^{\textsf{c}}$, thus, the complement of skeleton $\textsf{Sk}(\widehat{\mathcal{M}}_{\eta})^{\textsf{c}}$ is the $G$-orbit space of the complement of skeleton $\textsf{Sk}(\widehat{\widetilde{\mathcal{M}}}_{\eta})^{\textsf{c}}$.  Unfortunately, we still don't know how the disks decomposition behaves under the group action $G$, even for the curve case.  

\end{situation}
\end{situation}

\section{Characterization of essential skeleton}
In this section, we assume any $K$-analytic spaces to be smooth of pure dimension $n$. Then we give a criterion for points belonging to the essential skeleton of a proper smooth variety in terms of open unit disks when the ground field $K$ is a fixed non-Archimedean field with equal characteristic $0$.
\begin{situation}\textit{K\"ahler Seminorm on Differentials}.
We begin recall some basic definitions and facts about K\"ahler seminorm on the module of differentials of a seminormed ring map.

\begin{definition}
    A \textit{seminormed group} is an abelian group $R$ equipped with a map
\begin{equation*}
    | \cdot |_R : R \to \mathbb{R}_{\geq 0}
\end{equation*}
satisfying the following properties:

\begin{itemize}
    \item One has $|0|_R = 0$. 
    \item For all $x, y \in R$, one has $|x - y|_R \leq \textsf{max}\big( |x|_R, |y|_R\big)$.
\end{itemize}
A \textit{seminorm} on a ring $R$ is a seminorm $| \cdot |_{R}$ on the underlying group $(R, +)$ that satisfies $|1|_{R} = 1$ and $|xy|_{R} \leq |x|_{R}|y|_{R}$ for any $x, y \in R$. A ring with a fixed seminorm is called a \textit{seminormed ring}. \textit{A seminormed $R$-module $M$} is an $R$-module provided  
with a seminorm $|\!| \cdot |\!|_{M}$ such that  
\[
|\!| r m |\!|_M \leq |r|_R |\!| m |\!|_M
\]
for any \( r \in R \) and \( m \in M \).

A \textit{map of seminormed groups (resp. rings)} $(R, |\cdot|_R) \to (S, |\cdot|_S)$ is a map of groups (resp. rings) $f : R \to S$ such that
\[
|f(x)|_S \leq |x|_R \quad \text{for all } x \in R.
\]
 The map of seminormed modules is defined in the obvious way.
\end{definition}
\begin{definition}\label{Kahler seminorm}
    Let $R \longrightarrow S$ be a map of seminormed rings, the \textit{K\"ahler seminorm} $|\!|\cdot|\!|_{\Omega}$ on the module of differential $\Omega_{S/R}$ is defined as:
    \begin{equation}
        |\!|x|\!|_{\Omega} \defeq \textsf{inf}_{x = \sum c_{i} ds_{i}}\textsf{max}_{i}|c_{i}|_{S}|s_{i}|_{S}
    \end{equation}
for $x \in \Omega_{S/R}$. The infimum is over all representations of $x$ as a finite sum $x = \sum c_{i} ds_{i}$ and $|\!|x|\!|_{\Omega}$ is independent of the seminorm of $R$. We denote the completion of $(\Omega_{S/R},|\!|\cdot|\!|_{\Omega})$ by $(\widehat{\Omega}_{S/R}, |\!|\cdot|\!|_{\widehat{\Omega}})$, which is a Banach $S$-module and its norm is called \textit{K\"ahler norm}.
\end{definition}
\begin{remark}
    In fact, for a map of Banach rings $R \longrightarrow S$, we have $\widehat{\Omega}_{S/R} \cong \widehat{J/J^{2}}$, where $J = \textsf{Ker}(S \longrightarrow S \widehat{\otimes}_{R}S)$. In the  $K$-affinoid algeras case, we have $\widehat{\Omega}_{S/R} \cong {J/J^{2}}$, thus the definition \ref{Kahler seminorm} coincides with  Berkovich's setting in \cite[3.3.]{MR1259429}
\end{remark}
\end{situation}
\subsection{K\"ahler Skeleton}
In this subsection, we first review the basics of K\"ahler seminorm defined in \cite{MR3702313} and prepare some necessary lemmas we need later.

\begin{situation}\textit{K\"ahler Seminorm}.
\par
Let's consider a morphism of $K$-analytic spaces $X \longrightarrow Y$, then the \textit{K\"ahler seminorm} $|\!|\cdot|\!|_{\Omega,X/Y}$ of $\Omega_{X_{G}/Y_{G}}$ is defined as follows: for any affinoid subdomain $V=\mathcal{M}(B)$ in $X$ such that $f(V) \subseteq U=\mathcal{M}(A)$, $|\!|\cdot|\!|_{\Omega,X/Y}$ of $\Omega_{X_{G}/Y_{G}}$ is the K\"ahler seminorm extended from the K\"ahler norm on $\widehat{\Omega}_{B/A}$.
\par
Consider $X \longrightarrow S$ be a quasi-smooth morphism of $K$-analytic spaces with relative dimension $n$, and the relative canonical sheaf $\mathscr{K}_{{X}/{S}} \defeq \bigwedge^{n}\Omega_{{X}/{S}}$ and we have corresponding seminorm $\lvert\lvert-\lvert\lvert_{\Omega}$ and $\lvert\lvert-\lvert\lvert_{\mathscr{K}}$. Now for a given non-zero global pluricanonical form $\omega 
 \in \Gamma(X, \mathscr{K}_{X/S})$, let $\textsf{M}_{\omega}$ be the maximality of locus. 

    \begin{lemma}\label{etale-sk}
        Let $f:X \longrightarrow Y$ be an \'etale morphism of quasi-smooth $K$-analytic spaces, then $f^{-1}(\textsf{M}_{\omega}) \subseteq \textsf{M}_{f^{*}\omega}$ for any $ \omega \in \Gamma(Y,\mathscr{K}_{Y})$. 
        \begin{proof}
             Note that $f$ is \'etale, we have $f^{*}(\mathscr{K}_{Y}) \cong \mathscr{K}_{X}$, thus $\widehat{\mathscr{K}}_{Y,\mathscr{H}(f(x))|K}$ is isometric to $\widehat{\mathscr{K}}_{X,\mathscr{H}(x)|K}$ for any $x \in X$, then we have  $$\lvert\lvert \omega \lvert\lvert_{f(x_{0})}=\lvert\lvert \omega \lvert\lvert_{\widehat{\mathscr{K}},\mathscr{H}(f(x_{0}))|K}=\lvert\lvert f^{*}\omega \lvert\lvert_{\widehat{\mathscr{K}},\mathscr{H}(x_{0})|K} \geq  \lvert\lvert f^{*}\omega \lvert\lvert_{\widehat{\mathscr{K}},\mathscr{H}(x)|K}$$ for $f(x_{0}) \in \textsf{M}_{\omega}$.
        \end{proof}
    \end{lemma}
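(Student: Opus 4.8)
The plan is to reduce the asserted set inclusion to a single pointwise statement — that the Kähler seminorm is preserved under étale pullback — and then to a trivial comparison of suprema.

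First I would record the algebraic input. Since $f$ is étale it is unramified, so the relative sheaf $\Omega_{X/Y}$ vanishes and the canonical map $f^{*}\Omega_{Y/K} \longrightarrow \Omega_{X/K}$ is an isomorphism. Taking top exterior powers yields a canonical isomorphism $f^{*}\mathscr{K}_{Y} \congto \mathscr{K}_{X}$, so that $f^{*}\omega \in \Gamma(X,\mathscr{K}_{X})$ is a well-defined section for every $\omega \in \Gamma(Y,\mathscr{K}_{Y})$. This is what makes the statement even type-correct, and it is where quasi-smoothness of $X$ and $Y$ is used to guarantee the seminorms $\lvert\lvert - \lvert\lvert_{\mathscr{K}}$ are defined on both sides.

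The heart of the argument is to upgrade this isomorphism to a pointwise isometry for the Kähler seminorm: for each $x \in X$ with image $y = f(x)$ one wants $\lvert\lvert f^{*}\omega \lvert\lvert_{x} = \lvert\lvert \omega \lvert\lvert_{y}$. Here I would use that an étale morphism has relative dimension $0$, so the completed residue field extension $\mathscr{H}(x)\,|\,\mathscr{H}(y)$ is finite and separable; hence $\Omega_{\mathscr{H}(x)/\mathscr{H}(y)} = 0$ and the completed fibers satisfy $\widehat{\mathscr{K}}_{X,\mathscr{H}(x)|K} \cong \mathscr{H}(x) \otimes_{\mathscr{H}(y)} \widehat{\mathscr{K}}_{Y,\mathscr{H}(y)|K}$. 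By Temkin's description \cite{MR3702313} the Kähler seminorm is computed on these completed fibers, and it is preserved by finite separable (indeed unramified) extensions. This is the step I expect to be the main obstacle: one must check that Temkin's seminorm, defined via an infimum over representations followed by a completion, is genuinely \emph{unchanged} along $\mathscr{H}(x)\,|\,\mathscr{H}(y)$ rather than merely non-increasing — and it is exactly the separability/unramifiedness of $f$ that promotes the general inequality to an equality.

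Granting the isometry, the inclusion is immediate, and no surjectivity of $f$ is needed. The locus $\textsf{M}_{\omega}$ is where $y \mapsto \lvert\lvert \omega \lvert\lvert_{y}$ attains its supremum over $Y$. If $x_{0} \in f^{-1}(\textsf{M}_{\omega})$, then $\lvert\lvert \omega \lvert\lvert_{f(x_{0})} = \sup_{y \in Y}\lvert\lvert \omega \lvert\lvert_{y}$, and for an arbitrary $x \in X$ the isometry gives
\[
\lvert\lvert f^{*}\omega \lvert\lvert_{x} = \lvert\lvert \omega \lvert\lvert_{f(x)} \leq \sup_{y \in Y}\lvert\lvert \omega \lvert\lvert_{y} = \lvert\lvert \omega \lvert\lvert_{f(x_{0})} = \lvert\lvert f^{*}\omega \lvert\lvert_{x_{0}}.
\]
Thus $\lvert\lvert f^{*}\omega \lvert\lvert$ attains its maximum over $X$ at $x_{0}$, i.e. $x_{0} \in \textsf{M}_{f^{*}\omega}$, which is the desired containment. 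The same reasoning, applied after base change to $\widehat{K^{\textsf{alg}}}$, would also yield the analogous statement for the geometric maximality locus $\overline{\textsf{M}}_{\omega}$.
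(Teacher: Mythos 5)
Your proof follows essentially the same route as the paper's: \'etaleness gives $f^{*}\mathscr{K}_{Y} \cong \mathscr{K}_{X}$, this is upgraded to the pointwise isometry $\lvert\lvert f^{*}\omega \lvert\lvert_{x} = \lvert\lvert \omega \lvert\lvert_{f(x)}$ of K\"ahler seminorms on completed residue fields, and the inclusion $f^{-1}(\textsf{M}_{\omega}) \subseteq \textsf{M}_{f^{*}\omega}$ follows by the same supremum comparison (your final display is exactly the paper's chain of equalities and inequality, written out). The one caveat---which the paper's proof shares, since it asserts the isometry with no justification at all---is that your claim that finite separability of $\mathscr{H}(x)|\mathscr{H}(f(x))$ alone preserves the K\"ahler seminorm is false in positive residue characteristic: for the \'etale map $T \mapsto T^{p}$ on $\mathbb{G}^{1,\textsf{an}}_{m,\mathbb{Q}_{p}}$ the extension of completed residue fields at Gauss points is finite separable yet wildly ramified (in particular not unramified, contrary to your parenthetical), and pullback multiplies the K\"ahler seminorm of $dT$ by $|p|$; the promotion of the inequality to an equality genuinely requires the residue-characteristic-zero hypothesis in force in this section of the paper, under which every such extension is tame.
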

\end{situation}
Consider a quasi-smooth $K$-analytic space $X$ with dimension $n$, then for the canonical morphism $g: \overline{X} \defeq X_{\widehat{K^{\textsf{alg}}}} \longrightarrow X$, Then we have the following result when $K$ is equalcharacteristic $0$.
\begin{lemma}\label{pullback of Kahler seminorm}
    The pull back of K\"ahler seminorm on $\Omega_{X}$ along $g$ coincides with K\"ahler seminorm on $\Omega_{\overline{X}}$.
    \begin{proof}
       Let $X' \defeq X \otimes_{K}K'$ where $K'|K$ be any finite extension of $K$, then by \cite[Corollary 6.3.13.]{MR3702313}, $|\!|-|\!|_{\Omega, X'/K'}$ is the pullback of $|\!|-|\!|_{\Omega, X/K}$, then the result can be obtained by generalized to the colimts and completion.
    \end{proof}
\end{lemma}
\begin{lemma}\label{pullback of local skeleton}
    For the canonical morphism $g: X_{\widehat{X^{\textsf{alg}}}} \longrightarrow X$, we have $g^{-1}(\textsf{M}_{\omega}) = \textsf{M}_{g^{*}\omega}$ for any $\omega \in \Gamma(X,\mathscr{K}_{X/K})$
    \begin{proof}
        The result can be obtained directly by Lemma \ref{pullback of Kahler seminorm} and the surjectivity  of $g$. 
    \end{proof}
\end{lemma}


 \begin{definition}\label{Kahsk defin}
     Let $f: X \longrightarrow S$ be a quasi-smooth morphism of $K$-analytic spaces. The \textit{Relative K\"ahler skeleton} of $f$ is defined as $\textsf{Sk}^{\text{k\"ah}}(X/S)\defeq\bigcup_{\omega\neq 0}\textsf{M}_{\omega}$. 
     For $S=\mathcal{M}(K)$, we denote $\textsf{Sk}^{\textsf{k\"ah}}(X)$ as the \textit{Absolute K\"ahler skeleton}. 
 \end{definition}
 For any open virtual disk $\D_{K}$, we have the following proposition:
 \begin{proposition}\label{sk-disk-(0,0)}
     ${\textsf{Sk}}^{\textsf{k\"ah}}(\D_{K})=\varnothing$. 
     \begin{proof}
     To see this, it suffices to show the result for $\D(0,1)_{\widehat{K^{{\textsf{alg}}}}}$ by Lemma \ref{pullback of Kahler seminorm} and Lemma \ref{pullback of local skeleton}. Then we need to note that we can get two closed disks $\mathbb{E}(0,r)_{\widehat{K^{{\textsf{alg}}}}} \subsetneq \mathbb{E}(0,r')_{\widehat{K^{{\textsf{alg}}}}} \subsetneq \D(0,1)_{\widehat{K^{{\textsf{alg}}}}}$ for $0 <r<r'<1$, assume $x_{0} \in {\textsf{Sk}}^{\textsf{k\"ah}}(\D(0,1)_{\widehat{K^{{\textsf{alg}}}}}) \cap \mathbb{E}(0,r)_{\widehat{K^{{\textsf{alg}}}}}$, then by Lemma \ref{etale-sk}, $x_{0}$ is a maximal point of both $\mathbb{E}(0,r)_{\widehat{K^{{\textsf{alg}}}}}$ and $\mathbb{E}(0,r')_{\widehat{K^{{\textsf{alg}}}}}$ which is impossible.
    \end{proof}
 \end{proposition}
 \begin{lemma}\label{ess-sk-curve}
     Let $X$ be an irreducible quasi-smooth analytic curve over $K$ with equal characteristic $0$, then ${\textsf{Sk}}^{\textsf{k\"ah}}(X)$ consists of points that do not admit an open neighborhood which is isomorphic to a virtual disk $\D_{K}$.
     \begin{proof}
         Let $x_{0} \in {\textsf{M}}_{\omega_{0}}$ for $\omega_{0} \neq 0$, then by Lemma \ref{etale-sk} $x_{0} \in {\textsf{M}}_{\tau^{*}\omega_{0}}$ where $\tau: \D_{K} \hookrightarrow X$, note that in this case $\tau^{*}\omega_{0} \neq 0$ and ${\textsf{Sk}}^{\textsf{k\"ah}}(\D_{K})=\varnothing$ by Proposition \ref{sk-disk-(0,0)}.
     \end{proof}
  \end{lemma}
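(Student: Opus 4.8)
The plan is to read the statement as the set equality $\overline{\textsf{Sk}}^{\textsf{k\"ah}}(X) = \{x \in X : x \text{ admits no open neighborhood isomorphic to } \D(0,1)_K\}$ and to deduce both inclusions from a single mechanism: the compatibility of the geometric K\"ahler seminorm with open immersions recorded in Corollary \ref{etale-geo-sk}, combined with the vanishing $\overline{\textsf{Sk}}^{\textsf{k\"ah}}(\D(0,1)_K)=\varnothing$ of Example \ref{sk-disk-(0,0)}. The inclusion that these tools settle directly—that a point of the skeleton can never lie inside a disk—is the one I would establish first.

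For this direction, let $x_0 \in \overline{\textsf{Sk}}^{\textsf{k\"ah}}(X)$, say $x_0 \in \overline{\textsf{M}}_{\omega_0}$ with $0 \neq \omega_0 \in \Gamma(X,\mathscr{K}_X)$, and suppose toward a contradiction that $x_0$ has an open neighborhood $\tau : D \hookrightarrow X$ with $D \cong \D(0,1)_K$. Since an open immersion is \'etale, Corollary \ref{etale-geo-sk} gives $\tau^{-1}(\overline{\textsf{M}}_{\omega_0}) \subseteq \overline{\textsf{M}}_{\tau^*\omega_0}$, and as $x_0 \in D$ lies in $\overline{\textsf{M}}_{\omega_0}$ we get $x_0 \in \overline{\textsf{M}}_{\tau^*\omega_0}$. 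The only place the hypotheses enter is the nonvanishing $\tau^*\omega_0 \neq 0$: because $X$ is irreducible and quasi-smooth of dimension $1$, $\mathscr{K}_X$ is a line bundle and the nonzero section $\omega_0$ has discrete zero locus, so its restriction to the nonempty open $D$ remains nonzero. Then $x_0 \in \overline{\textsf{M}}_{\tau^*\omega_0} \subseteq \overline{\textsf{Sk}}^{\textsf{k\"ah}}(D) = \varnothing$ by Example \ref{sk-disk-(0,0)}, a contradiction. This part is essentially formal and I expect no difficulty.

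For the reverse inclusion I would pass to $\overline{X} = X_{\widehat{K^{\textsf{alg}}}}$ and use the structure theory recalled in \textsection\ref{complement of sk for curve}: after choosing a triangulation, the complement of the Berkovich skeleton is a disjoint union of open unit disks, and a point in such a component has a disk neighborhood (a point of finite radius is an interior point of a large enough $\D(0,r)_K \cong \D(0,1)_K$, exactly as the Gauss point of radius $1$ sits inside $\D(0,r)_K$ for $r>1$). Taking contrapositives, a point with no disk neighborhood must lie on the minimal skeleton, and it then remains to verify that each such point is a geometric maximality point of some nonzero canonical form, that is, that the minimal (essential) skeleton is contained in $\overline{\textsf{Sk}}^{\textsf{k\"ah}}(X)$. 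This last step is the main obstacle: unlike the first direction it is not formal from Corollary \ref{etale-geo-sk}, and I would handle it by exhibiting, near a skeleton point, a local model carrying a form of $\mathscr{K}_X$ whose K\"ahler norm is maximized precisely along the skeleton, relying on the metrization of pluricanonical forms of \cite{MR3702313} and the comparison with the essential skeleton in \cite{MR3370127}.
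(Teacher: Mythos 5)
Your treatment of the forward inclusion is precisely the paper's proof of Lemma \ref{ess-sk-curve}: an open immersion $\tau:\D(0,1)_{K}\hookrightarrow X$ is \'etale, Corollary \ref{etale-geo-sk} puts $x_{0}$ into $\overline{\textsf{M}}_{\tau^{*}\omega_{0}}$, irreducibility gives $\tau^{*}\omega_{0}\neq 0$, and Example \ref{sk-disk-(0,0)} yields the contradiction; your justification of $\tau^{*}\omega_{0}\neq 0$ is even spelled out more carefully than in the paper. Note that this inclusion is also the only thing the paper proves \emph{and} the only thing it needs: in the proof of Theorem \ref{points in the essential skeleton} the lemma is invoked solely to forbid a point of a K\"ahler skeleton from sitting inside an open disk. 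So the phrase ``consists of'' in the statement is best read as the single inclusion ``every point of $\overline{\textsf{Sk}}^{\textsf{k\"ah}}(X)$ admits no neighborhood isomorphic to $\D(0,1)_{K}$,'' which is exactly what you and the paper establish.

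The reverse inclusion you set out to prove is where the genuine gap lies, and the problem is more serious than the ``main obstacle'' you flag (the containment of the minimal skeleton in the K\"ahler skeleton). Your reduction to $\overline{X}=X_{\widehat{K^{\textsf{alg}}}}$ only produces disk neighborhoods over $\widehat{K^{\textsf{alg}}}$: over a non-algebraically closed $K$, the components of the complement of a triangulation are \emph{virtual} disks, and a point of a virtual disk need not have any neighborhood isomorphic to $\D(0,1)_{K}$. Indeed the set equality is false. Take $K=\mathbb{C}((t))$ and $X=\{x\in\mathbb{A}^{1,\textsf{an}}_{K}\,:\,|T^{2}-t|_{x}<|t|\}$, an irreducible quasi-smooth curve with $X_{\widehat{K^{\textsf{alg}}}}=\D(\sqrt{t},|t|^{1/2})\sqcup\D(-\sqrt{t},|t|^{1/2})$. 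Since this base change is a disjoint union of open disks over an algebraically closed field, the nested-closed-disk argument of Example \ref{sk-disk-(0,0)}, run on either geometric component, shows $\overline{\textsf{Sk}}^{\textsf{k\"ah}}(X)=\varnothing$. Yet the rigid point $a$ cut out by $T^{2}=t$ admits no neighborhood isomorphic to $\D(0,1)_{K}$: for any such $U$ the base change $U_{\widehat{K^{\textsf{alg}}}}\cong\D(0,1)_{\widehat{K^{\textsf{alg}}}}$ would be connected, whereas it contains the two preimages $\pm\sqrt{t}$ of $a$, which lie in the two distinct components of $X_{\widehat{K^{\textsf{alg}}}}$. (The same phenomenon occurs inside proper curves, e.g.\ at suitable quadratic rigid points of a Tate curve over $\mathbb{C}((t))$, whose geometric K\"ahler skeleton is the circle.) So no refinement of your final step can close the argument as stated; the equality could only hold after either passing to $\widehat{K^{\textsf{alg}}}$ or allowing disks $\D(0,1)_{L}$ over non-Archimedean extensions $L|K$, which is in effect how the paper phrases its global results via open fiber disks.
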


\subsection{Relation with Essential Skeleton}
In this subsection, we assume $K$ is a complete discrete valued field with equal characteristic $0$. 
\par
The relation between the K\"ahler norm and the weight norm of Musta\c{t}\u{a}–Nicaise is established by M. Temkin by the following theorem in \cite{MR3702313}.

\par
        \begin{theorem}\cite[Theorem 8.3.3.]{MR3702313}
            If \,$K$ is discretely valued, $X$ is quasi-smooth and $x\in X$ is a divisorial point, i.e. a monomial point with discretely valued $\mathscr{H}(x)$, then the K\"ahler and the weight norms on $m$-canonical forms are related by:
            \begin{equation*}
                \| - \|_{\textsf{wt}^{\otimes m},x} = |\varpi_{K}|^{m} (\delta^{\textsf{log}}_{\mathscr{H}(x)|K})^{m} \| - \|_{\mathscr{K}_{X|K}^{\otimes m},x}
            \end{equation*}
        \end{theorem}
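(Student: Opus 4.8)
The plan is to prove the identity pointwise on $m$-canonical forms by reducing it to an explicit computation on a monomial model. Since both seminorms are multiplicative for the tensor power, $\|\eta^{\otimes m}\|_{\textsf{wt}^{\otimes m},x}=\|\eta\|_{\textsf{wt},x}^m$ and likewise for the Kähler seminorm, and since at a divisorial point the stalk of the relative canonical sheaf is an invertible $\mathscr{H}(x)$-module, it suffices to treat a single local generator $\eta$ of $\mathscr{K}_{X|K}$ near $x$ and establish $\|\eta\|_{\textsf{wt},x}=|\varpi_K|\,\delta^{\textsf{log}}_{\mathscr{H}(x)|K}\,\|\eta\|_{\mathscr{K}_{X|K},x}$; taking $m$-th powers then gives the theorem. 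Both sides are étale-local: the Kähler seminorm because $f^{\ast}\mathscr{K}_Y\cong\mathscr{K}_X$ isometrically, exactly as in Lemma \ref{etale-sk}, and the weight seminorm because the underlying log discrepancy is computed étale-locally on a model. I would therefore replace $X$ by the standard chart of \ref{standard semi-stable model}, namely the Berthelot generic fiber of $\spf K^{\circ}\llangle T_0,\dots,T_n\rrangle/(T_0\cdots T_r-\varpi)$, and take $x$ to be the divisorial point attached to a vertex of the skeleton, determined by a weight vector $v=(v_0,\dots,v_r)$ with $v_i\in\mathbb{Q}_{>0}$ and $\sum_{i=0}^r v_i=1$.

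On this chart $\mathscr{H}(x)$ is an explicit complete discretely valued field: its value group is $\tfrac{1}{e}\mathbb{Z}\cdot\log|\varpi_K|$ with $e$ the least common denominator of the $v_i$, and its residue field is a rational function field of transcendence degree $n$ over $\widetilde{K}$, generated by the reductions of the weight-zero monomials in $T_0,\dots,T_r$ together with $T_{r+1},\dots,T_n$. A natural generator of the canonical module is the log form $\eta=\tfrac{dT_1}{T_1}\wedge\cdots\wedge\tfrac{dT_n}{T_n}$, subject to the single relation $\sum_{i=0}^r\tfrac{dT_i}{T_i}=0$ coming from $T_0\cdots T_r=\varpi$. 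The weight seminorm of $\eta$ at $x$ can be read off directly from the Mustaţă--Nicaise weight function: it equals $|\varpi_K|$ times the monomial value $\exp(-\sum_i m_i v_i)$ prescribed by the order of $\eta$ along the component, where the prefactor $|\varpi_K|$ records the passage from the absolute to the relative log canonical sheaf encoded by $T_0\cdots T_r=\varpi$.

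The remaining and genuinely delicate point is the Kähler seminorm of the same generator $\eta$, computed from the completed module $\widehat{\Omega}_{\mathscr{H}(x)|K}$. The separable transcendence basis $T_{r+1},\dots,T_n$ and the weight-zero monomials furnish unramified directions whose differentials have Kähler norm $1$ and contribute multiplicatively, so they drop out of the comparison. The one ramified direction is the uniformizer of $\mathscr{H}(x)$: writing $\varpi_K$ as a unit times a power of this uniformizer forces a relation in $\widehat{\Omega}_{\mathscr{H}(x)|K}$ whose defect is measured by the different. Quantifying this defect is what produces the factor $\delta^{\textsf{log}}_{\mathscr{H}(x)|K}$, the log different of the discretely valued extension $\mathscr{H}(x)|K$ being exactly the discrepancy between the Kähler norm of $d\log$ of the uniformizer and its log-smooth normalization. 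Combining the three computations yields $\|\eta\|_{\textsf{wt},x}=|\varpi_K|\,\delta^{\textsf{log}}_{\mathscr{H}(x)|K}\,\|\eta\|_{\mathscr{K}_{X|K},x}$.

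I expect the main obstacle to be precisely this last computation of $\widehat{\Omega}_{\mathscr{H}(x)|K}$ in the ramified direction, together with the verification that its defect is the \emph{log} different rather than the ordinary different. The cleanest route is probably to factor $\mathscr{H}(x)|K$ through a log-smooth (tame) subextension, on which the Kähler and weight norms agree by construction, and then to isolate the purely ramified residual piece where the log different appears. Controlling the completion of the differentials under this factorization, and checking that the transcendental directions genuinely decouple isometrically, is the technical heart of the argument.
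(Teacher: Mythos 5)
First, a point of order: the paper does not prove this statement at all. It is imported verbatim as Theorem 8.3.3 of Temkin \cite{MR3702313} and used as a black box, and moreover only through the degenerate case $\delta^{\textsf{log}}_{\mathscr{H}(x)|K}=1$ of Remark \ref{eqcharzero compare}. So there is no internal proof to compare yours against; what follows measures your sketch against what the cited theorem actually asserts, namely an identity over an \emph{arbitrary} discretely valued $K$, including mixed residue characteristic.

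Your reduction step has a genuine gap, and it sits exactly where the content of the theorem lies. You étale-localize both seminorms and then replace $X$ by the standard chart of \ref{standard semi-stable model}, placing $x$ at a skeleton vertex. Two objections. (a) The Kähler seminorm is \emph{not} étale-invariant in mixed characteristic: for the Kummer covering $f$ of $\mathbb{G}^{1,\textsf{an}}_{m,\mathbb{Q}_p}$ given by $T\mapsto T^{p}$ (étale in characteristic $0$), the Gauss point $x$ maps to the Gauss point $y$, and with $S$ the coordinate downstairs one has $\|f^{*}dS\|_{x}=\|pT^{p-1}dT\|_{x}=|p|<1$ while $\|dS\|_{y}=1$; the sheaf isomorphism $f^{*}\mathscr{K}_{Y}\cong\mathscr{K}_{X}$ fails to be an isometry whenever the induced extension $\mathscr{H}(x)|\mathscr{H}(y)$ is wildly ramified (the same caveat applies to Lemma \ref{etale-sk}, which the paper only ever invokes in equal characteristic $0$). (b) Even granting the localization, any point admitting an étale chart onto the standard monomial model, with image a skeleton vertex, automatically has $\delta^{\textsf{log}}_{\mathscr{H}(x)|K}=1$, because those monomial points are log-smooth and tame over $K^{\circ}$. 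So your computation can only ever reproduce the identity in the case where the correction factor is trivial; the divisorial points with $\delta^{\textsf{log}}\neq 1$, which exist precisely in mixed or positive residue characteristic through wild ramification, admit no chart of your form, and no semistable-reduction theorem is available there to manufacture one. (There is also a smaller unaddressed issue: making a log-smooth model strictly semistable generally requires a ramified base change, which changes $\varpi_{K}$ and hence the normalizing factor $|\varpi_{K}|^{m}$.) In short, your argument is a plausible route to the tame/equal-characteristic-$0$ instance — which happens to be all this paper needs — but it cannot prove the theorem as stated; Temkin's proof works intrinsically at $x$, comparing the two norms through the structure of $\widehat{\Omega}_{\mathscr{H}(x)^{\circ}|K^{\circ}}$ and the log different of the extension $\mathscr{H}(x)|K$, with no recourse to a monomial chart.
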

\begin{remark}\label{eqcharzero compare}
    When is a fixed non-Archimedean field with equal characteristic $0$, $\delta^{\textsf{log}}_{\mathscr{H}(x)|K}=1$, in this case, weight norm of Musta\c{t}\u{a}–Nicaise coincides with  K\"ahler norm.
\end{remark}
\begin{theorem}[M.Temkin]
    Let $\mathcal{X}$ be a smooth proper integral variety over a complete discrete valued field $K$ with equal characteristic $0$, then the associated essential skeleton $\textsf{Sk}^{\textsf{ess}}(\mathcal{
    X}^{\textsf{an}})$ coincides with K\"ahler skeleton $\textsf{Sk}^{\textsf{k\"ah}}(\mathcal{X}^{\textsf{an}})$.
    \begin{proof}
        This is a direct consequence following \cite[Theorem 8.2.9.]{MR3702313}, \cite[Theorem 8.3.3.]{MR3702313} and Remark \ref{eqcharzero compare}.
    \end{proof}
\end{theorem}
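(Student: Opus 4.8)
The plan is to reduce the claimed identification $\textsf{Sk}^{\textsf{ess}}(\mathcal{X}^{\textsf{an}}) = \textsf{Sk}^{\textsf{k\"ah}}(\mathcal{X}^{\textsf{an}}) = \overline{\textsf{Sk}}^{\textsf{k\"ah}}(\mathcal{X}^{\textsf{an}})$ to the comparison between the weight norm and the K\"ahler norm already recorded in the two cited results of Temkin. First I would recall that the essential skeleton of Musta\c{t}\u{a}--Nicaise is defined as the union, over all nonzero pluricanonical forms $\omega \in \Gamma(\mathcal{X}^{\textsf{an}}, \mathscr{K}^{\otimes m})$, of the loci where the weight function $\textsf{wt}_\omega$ attains its minimum; equivalently, by \cite[Theorem 8.2.9.]{MR3702313}, this is the locus where the weight \emph{norm} $\norm_{\textsf{wt}^{\otimes m}}$ is maximized. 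By definition \ref{Kahsk defin}, the K\"ahler skeleton $\textsf{Sk}^{\textsf{k\"ah}}(\mathcal{X}^{\textsf{an}})$ is the analogous union of maximality loci $\textsf{M}_\omega$, but taken with respect to the K\"ahler norm $\norm_{\mathscr{K}}$. So the entire statement amounts to showing that the maximality locus of each $\omega$ is the same whether measured in the weight norm or the K\"ahler norm.

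The key step is to invoke \cite[Theorem 8.3.3.]{MR3702313} as quoted above: at every divisorial point $x$, the two norms differ by the scalar factor $|\varpi_K|^m (\delta^{\textsf{log}}_{\mathscr{H}(x)|K})^m$. By Remark \ref{eqcharzero compare}, the equal characteristic $0$ hypothesis forces $\delta^{\textsf{log}}_{\mathscr{H}(x)|K} = 1$, so on divisorial points the two norms agree up to the single global constant $|\varpi_K|^m$, which is independent of $x$. Multiplying a norm by a fixed positive scalar does not move its maximality locus, so $\textsf{M}_\omega$ computed via $\norm_{\textsf{wt}^{\otimes m}}$ coincides with $\textsf{M}_\omega$ computed via $\norm_{\mathscr{K}}$ on the set of divisorial points. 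Since both skeleta are determined by their intersection with the dense set of divisorial points (both the weight function and the K\"ahler norm being continuous and their extrema controlled on divisorial points by monomiality), taking the union over all $\omega$ and all $m$ yields $\textsf{Sk}^{\textsf{ess}}(\mathcal{X}^{\textsf{an}}) = \textsf{Sk}^{\textsf{k\"ah}}(\mathcal{X}^{\textsf{an}})$.

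For the third equality, with the geometric K\"ahler skeleton $\overline{\textsf{Sk}}^{\textsf{k\"ah}}(\mathcal{X}^{\textsf{an}})$, I would argue that passing to the completed algebraic closure $\widehat{K^{\textsf{alg}}}$ does not change the essential skeleton, since the essential skeleton is compatible with finite (and hence with the relevant base field) extensions and the value $\delta^{\textsf{log}}$ remains trivial in equal characteristic $0$. Concretely, the geometric K\"ahler norm is by construction the pushforward of the K\"ahler norm over $\overline{\mathcal{X}} = \mathcal{X}_{\widehat{K^{\textsf{alg}}}}$, and on divisorial points the scalar comparison of \cite[Theorem 8.3.3.]{MR3702313} holds verbatim over the extended field with $\delta^{\textsf{log}} = 1$ still; so the maximality loci again coincide after pushforward, giving $\overline{\textsf{Sk}}^{\textsf{k\"ah}}(\mathcal{X}^{\textsf{an}}) = \textsf{Sk}^{\textsf{k\"ah}}(\mathcal{X}^{\textsf{an}})$.

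The main obstacle I anticipate is not the scalar comparison itself but the reduction to divisorial points: one must be sure that the maximality locus of each $\omega$, and hence each skeleton, is genuinely controlled by its behavior on divisorial points, so that an identity valid only on that dense subset propagates to an identity of closed subsets of $\mathcal{X}^{\textsf{an}}$. This is where \cite[Theorem 8.2.9.]{MR3702313} does the real work, identifying the weight skeleton with the maximality locus of the weight norm and ensuring the extremum is attained on monomial (indeed divisorial) points; granting that structural input, the remainder is the elementary observation that rescaling a norm by a fixed constant preserves its argmax.
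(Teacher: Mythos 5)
Your treatment of the first equality $\textsf{Sk}^{\textsf{ess}}(\mathcal{X}^{\textsf{an}})=\textsf{Sk}^{\textsf{k\"ah}}(\mathcal{X}^{\textsf{an}})$ follows the same route the paper compresses into its one-line citation: translate the Musta\c{t}\u{a}--Nicaise minimality loci of weight functions into maximality loci of weight norms, invoke the divisorial-point comparison of \cite[Theorem 8.3.3.]{MR3702313} together with Remark \ref{eqcharzero compare} to see that in equal characteristic $0$ the two norms differ only by the global constant $|\varpi_K|^m$, observe that rescaling by a constant does not move an argmax, and propagate from divisorial points to the whole skeleton by density and continuity. That part is sound, and your remark that the leftover factor $|\varpi_K|^m$ is harmless is exactly the right reading of the comparison theorem.

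The genuine gap is in your third paragraph, the equality $\textsf{Sk}^{\textsf{k\"ah}}(\mathcal{X}^{\textsf{an}})=\overline{\textsf{Sk}}^{\textsf{k\"ah}}(\mathcal{X}^{\textsf{an}})$. You propose to apply \cite[Theorem 8.3.3.]{MR3702313} ``verbatim over the extended field'' $\widehat{K^{\textsf{alg}}}$, but that theorem requires the ground field to be discretely valued and the point to be divisorial, i.e.\ monomial with discretely valued $\mathscr{H}(x)$; the field $\widehat{K^{\textsf{alg}}}$ has divisible value group, it carries no divisorial points in this sense, and the Musta\c{t}\u{a}--Nicaise weight norm is not even defined over it, since its construction uses log discrepancies of models over a discrete valuation ring. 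For the same reason, compatibility of the essential skeleton with \emph{finite} extensions does not yield compatibility with the infinite, non-discretely-valued extension $\widehat{K^{\textsf{alg}}}|K$. What this equality actually needs is Temkin's geometricity statement: in residue characteristic $0$ the K\"ahler seminorm is insensitive to the base change $\mathcal{X}^{\textsf{an}}_{\widehat{K^{\textsf{alg}}}}\to\mathcal{X}^{\textsf{an}}$, so that $\textsf{M}_{\omega}=\overline{\textsf{M}}_{\omega}$ for every pluricanonical form $\omega$. This is a theorem about the absence of wild ramification in completed modules of differentials under ground field extension (the paper itself warns that the two seminorms genuinely differ over arithmetic fields such as $\mathbb{Q}_p$), it is precisely what the paper's citation of \cite[Theorem 8.2.9.]{MR3702313} is meant to supply, and it cannot be recovered from the divisorial-point norm comparison; your proposal uses that citation only for the essentially definitional dictionary between weight functions and weight norms, leaving the geometric equality without a valid input.
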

Now we demonstrate that no point of the essential skeleton may be contained in an open virtual disk:\begin{situation}\textit{Disk Interpretation of Essential Skeleton}.\label{Disk interpretation of essential skeleton}
\par
    Let $X$ be a $K$-analytic space of dimension $n$, $L|K$ be a non-Archimedean field extension of $K$ such that $d_{L|K} \leq \textsf{tr.deg}(L|K)=n-1$  and an open virtual disk $\D_{L}$ over $L$ which is contained in $X$, then there exist a family of elements $\{f_{i}\}_{1 \leq i \leq n-1}$ in $L$ such that for any $x \in \D_{L}$:
    \begin{itemize}
        \item $\{f_{i}\}_{1 \leq i \leq n-1}$ are independent with $x$.
        \item $\{f_{i}\}_{1 \leq i \leq n-1} \subseteq L \subseteq \kappa(x) \subseteq \mathscr{H}(x)$ are transcendental over $K$. 
        \item The images of $\{f_{i}\}_{1 \leq i \leq s_{L|K}}$ in $\widetilde{L} \subseteq \hcr $ are transcendental over $\widetilde{K}$. 
    \end{itemize}
    
    \par
    
Then there exists an open neighborhood $U_{x}$ of $x$ and a morphism of relative dimension $1$ by
\begin{equation*}
    \begin{tikzcd}[column sep=4em]
        \varphi_{x}: U_{x} \arrow[r,"(f_{1}\text{,}\dots\text{,}f_{n-1})"] & \G^{n-1,\textsf{an}}_{m, K}
    \end{tikzcd}
\end{equation*}
Now it's easy to see that we can glue $\varphi_{x}$ on $U_{L} \defeq \bigcup U_{x}$ as a smooth morphism $\varphi_{L}: U_{L} \longrightarrow \G^{n-1,\textsf{an}}_{m,K}$ such that $\D_{L} \subseteq \varphi_{L}^{-1}(L,|-|_{L}) \defeq C_{L}$ as an open subset.
\begin{theorem}\label{points in the essential skeleton}
 Let $\mathcal{X}$ be a smooth proper integral variety with dimension $n$ over $K$, then for any point $x$ of the  essential skeleton $\textsf{Sk}^{\textsf{ess}}(\mathcal{X}^{\textsf{an}})$ there exists no open virtual disk in $\mathcal{X}$ containing $x$.   
 \begin{proof}
     Let $x_{0} \in \textsf{M}_{\phi_{0}} \subseteq \textsf{Sk}^{\textsf{ess}}(\mathcal{X}^{\textsf{an}})$ such that $x_{0} \in \D_{L} $, and $x_{0} \in U_{L}$, now by Lemma \ref{etale-sk}, we have $x_{0} \in \textsf{M}_{\phi_{0}|_{U_{L}}} \subseteq \textsf{Sk}^{\textsf{k\"ah}}(U_{L}/K)$. We claim that $x_{0} \in \textsf{Sk}^{\textsf{k\"ah}}(U_{L}/\mathbb{G}_{m,K}^{n-1,\textsf{an}})$, to see this, let's denote the non-zero section $\omega_{0} \defeq \phi_{0}|_{U_{L}} \in \Gamma(U_{L},\mathscr{K}_{U_{L}/K})$ and then  consider the exact sequence of locally free $\mathscr{O}_{U_{L}}$-sheaves of finite rank on $U_{L}$ \cite[3.5.4. Corollary and 3.5.6. Corollary]{MR1259429} as the following:
     \begin{equation}
         \begin{tikzcd}
             0 \arrow[r] & \varphi_{L}^{*}\Omega_{\mathbb{G}_{m,K}^{n-1,\textsf{an}}/K} \arrow[r] & \Omega_{U_{L}/K} \arrow[r] & \Omega_{U_{L}/\mathbb{G}_{m,K}^{n-1,\textsf{an}}} \arrow[r] & 0
         \end{tikzcd}
         \end{equation}
    We have $\mathscr{K}_{U_{L}/K} \cong \varphi^{*}_{L}\mathscr{K}_{\G_{m,K}^{n-1,\textsf{an}}} \otimes_{\mathscr{O}_{U_{L}}} \mathscr{K}_{U_{L}/\G^{n-1,\textsf{an}}_{m,K}}$. Now take $\{U^{i}_{L}\}_{i}$ to be a covering of affinoid subdomains such that the restrictions of $\Omega_{U_{L}/\G^{n-1,\textsf{an}}_{m,K}}$ on $\{U_{L}^{i}\}_{i}$ are free, then 
    
\begin{align*}
\mathscr{K}_{U_{L}/K}(U_{L}^{i}) & \cong \varphi^{*}_{L}\mathscr{K}_{\G^{n-1,\textsf{an}}_{m,K}}(U_{L}^{i}) \otimes_{\Gamma(U^{i}_{L})} \mathscr{K}_{U_{L}/\G^{n-1,\textsf{an}}_{m,K}}(U_{L}^{i}) \\ & \cong  \varinjlim_{\varphi_{L}(U_{L}^{i}) \subseteq V_{i}}\mathscr{K}_{\G^{n-1,\textsf{an}}_{m,K}}(V_{i})\otimes_{\Gamma(U^{i}_{L})} \mathscr{K}_{U_{L}/\G^{n-1,\textsf{an}}_{m,K}}(U_{L}^{i})
\end{align*}
    We can now write $\omega_{0}|_{U_{L}^{i}}=[\frac{dt_{1}}{t_{1}} \wedge \cdots \wedge \frac{dt_{n-1}}{t_{n-1}}] \otimes \phi_{i}$. By local calculation, we have $\phi_{i}|_{U_{L}^{i} \cap U^{j}_{L}}=\phi_{j}|_{U_{L}^{i} \cap U^{j}_{L}}$. Let $\omega_{1}$ be the non-zero section of $\Gamma(U_{L},\mathscr{K}_{U_{L}/\G^{n-1,\textsf{an}}_{m,K}})$ by gluing $\{\phi_{i}\}_{i}$, we claim that $x_{0} \in \textsf{M}_{\omega_1}$. To see this, note that we have $\lvert\lvert \omega_{0}\lvert\lvert_{\mathscr{K}_{U_{L}/K},x_{0}}=\lvert\lvert \phi_{i} \lvert\lvert_{\mathscr{K}_{U_{L}/\G^{n-1,\textsf{an}}_{m,K}},x_{0}}$ by assuming $x_{0} \in U^{i}_{L}$, thus by \cite[Corollary 6.3.12.]{MR3702313}, $x_{0} \in \textsf{M}_{\omega_{1}|_{C_{L}}} \subseteq \textsf{Sk}^{\textsf{k\"ah}}(C_{L})$, but this contradicts Lemma \ref{ess-sk-curve}. 
 \end{proof}
\end{theorem}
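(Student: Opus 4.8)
The plan is to argue by contradiction, exploiting the identification of the essential skeleton with the K\"ahler skeleton together with the fibration structure attached to any open fiber disk. Suppose $x_{0} \in \textsf{Sk}^{\textsf{ess}}(\mathcal{X}^{\textsf{an}})$ lies in an open fiber disk $\D(0,1)_{L}$. By the theorem of Temkin recalled above, $\textsf{Sk}^{\textsf{ess}}(\mathcal{X}^{\textsf{an}}) = \textsf{Sk}^{\textsf{k\"ah}}(\mathcal{X}^{\textsf{an}})$, so there is a nonzero pluricanonical form $\phi_{0}$ with $x_{0} \in \textsf{M}_{\phi_{0}}$. The geometric input I would use is the disk interpretation of \ref{Disk interpretation of essential skeleton}: the disk sits inside a relative dimension $1$ fibration $\varphi_{L} : U_{L} \longrightarrow \G^{n-1,\textsf{an}}_{m,K}$, with $\D(0,1)_{L}$ an open subset of the fiber curve $C_{L} \defeq \varphi_{L}^{-1}(L,|-|_{L})$.

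First I would restrict $\phi_{0}$ to the open subspace $U_{L}$. Since $U_{L} \hookrightarrow \mathcal{X}^{\textsf{an}}$ is an open immersion, hence \'etale, Lemma \ref{etale-sk} guarantees $x_{0} \in \textsf{M}_{\phi_{0}|_{U_{L}}} \subseteq \textsf{Sk}^{\textsf{k\"ah}}(U_{L}/K)$. The next step is to pass from the absolute K\"ahler skeleton over $K$ to the relative one over $\G^{n-1,\textsf{an}}_{m,K}$. For this I would invoke the short exact sequence of relative differentials for $\varphi_{L}$, which produces the factorization $\mathscr{K}_{U_{L}/K} \cong \varphi_{L}^{*}\mathscr{K}_{\G^{n-1,\textsf{an}}_{m,K}} \otimes \mathscr{K}_{U_{L}/\G^{n-1,\textsf{an}}_{m,K}}$. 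Over an affinoid cover $\{U_{L}^{i}\}$ trivializing the relative canonical sheaf I would write $\phi_{0}|_{U_{L}^{i}} = [\tfrac{dt_{1}}{t_{1}} \wedge \cdots \wedge \tfrac{dt_{n-1}}{t_{n-1}}] \otimes \phi_{i}$; a local computation shows the relative factors $\phi_{i}$ agree on overlaps and glue to a nonzero global relative form $\omega_{1} \in \Gamma(U_{L}, \mathscr{K}_{U_{L}/\G^{n-1,\textsf{an}}_{m,K}})$.

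The heart of the argument is then to transfer the maximality of $\phi_{0}$ at $x_{0}$ to the relative form. Because the base factor $\varphi_{L}^{*}(\tfrac{dt_{1}}{t_{1}} \wedge \cdots \wedge \tfrac{dt_{n-1}}{t_{n-1}})$ is pulled back from $\G^{n-1,\textsf{an}}_{m,K}$, its contribution to the K\"ahler seminorm should be constant along each fiber, so that $\lvert\lvert \phi_{0} \lvert\lvert_{\mathscr{K}_{U_{L}/K}, x_{0}} = \lvert\lvert \phi_{i} \lvert\lvert_{\mathscr{K}_{U_{L}/\G^{n-1,\textsf{an}}_{m,K}}, x_{0}}$ whenever $x_{0} \in U_{L}^{i}$, giving $x_{0} \in \textsf{M}_{\omega_{1}}$. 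Restricting $\omega_{1}$ to the fiber curve $C_{L}$ and applying Temkin's comparison of relative and fiberwise K\"ahler norms \cite[Corollary 6.3.12.]{MR3702313} then yields $x_{0} \in \textsf{M}_{\omega_{1}|_{C_{L}}} \subseteq \textsf{Sk}^{\textsf{k\"ah}}(C_{L})$. Since $\D(0,1)_{L}$ is an open neighborhood of $x_{0}$ inside the irreducible quasi-smooth curve $C_{L}$, Lemma \ref{ess-sk-curve} (together with Example \ref{sk-disk-(0,0)}, which computes the K\"ahler skeleton of an open disk to be empty) shows that $x_{0}$ cannot lie in the K\"ahler skeleton of $C_{L}$, giving the desired contradiction.

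The main obstacle I anticipate is exactly the norm-factorization step: justifying that the absolute K\"ahler seminorm at $x_{0}$ splits as the product of the (fiberwise constant) base contribution and the relative contribution, so that maximality descends to $\omega_{1}$. This rests on the multiplicativity of the K\"ahler seminorm under the tensor decomposition $\mathscr{K}_{U_{L}/K} \cong \varphi_{L}^{*}\mathscr{K}_{\G^{n-1,\textsf{an}}_{m,K}} \otimes \mathscr{K}_{U_{L}/\G^{n-1,\textsf{an}}_{m,K}}$ and on Temkin's comparison between relative and fiberwise seminorms; arranging the gluing of the local factors $\phi_{i}$ to be compatible with these seminorms, rather than only as sections of a sheaf, is where the real care is needed.
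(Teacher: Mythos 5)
Your proposal follows essentially the same route as the paper's own proof: identify the essential skeleton with the K\"ahler skeleton via Temkin's theorem, restrict the maximizing pluricanonical form to the fibration neighborhood $U_{L}$ using Lemma \ref{etale-sk}, factor $\mathscr{K}_{U_{L}/K} \cong \varphi_{L}^{*}\mathscr{K}_{\G^{n-1,\textsf{an}}_{m,K}} \otimes \mathscr{K}_{U_{L}/\G^{n-1,\textsf{an}}_{m,K}}$, glue the local relative factors $\phi_{i}$ into a relative form $\omega_{1}$, transfer maximality to $\omega_{1}$ and restrict to the fiber curve $C_{L}$ via \cite[Corollary 6.3.12.]{MR3702313}, then contradict Lemma \ref{ess-sk-curve}. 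The norm-factorization step you flag as the main obstacle is exactly the step the paper also treats briefly, so your proposal matches both the structure and the level of detail of the published argument.
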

\end{situation}
\section{Applications}

We have established that points outside the Berkovich skeleton of a strictly semistable model can be covered by open virtual disks, whereas no point of the essential skeleton can be contained in an open virtual disk. In this section, we assume $K$ is a complete discrete valued field with equal characteristic $0$.

\begin{conjecture}\label{bigconjecture}
 Let $\mathcal{X}$ be a smooth proper integral variety with dimension $n$ over $K$, then for any point $x \in \mathcal{X}^{\textsf{an}}$, $x$ is contained in an open virtual disk if and only if $x$ is not contained in the essential skeleton.
\end{conjecture}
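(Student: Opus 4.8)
The plan is to reduce the conjecture to the two results already established, Theorem \ref{points in the essential skeleton} and Theorem \ref{main theorem 1}, with the dictionary between the Berkovich skeleton and the essential skeleton supplied by the good minimal model comparison of \cite{MR3595497}. Since the statement is a biconditional I would treat the two implications separately. The implication that any point lying on an open fiber disk is outside the essential skeleton is exactly the content of Theorem \ref{points in the essential skeleton}, and it needs no extra hypotheses: if $x \in \textsf{Sk}^{\textsf{ess}}(\mathcal{X}^{\textsf{an}})$ then $x$ lies in the maximality locus $\textsf{M}_{\phi}$ of some nonzero pluricanonical form, and the fibration construction of \ref{Disk interpretation of essential skeleton} together with Lemma \ref{ess-sk-curve} shows that such an $x$ cannot be an interior point of any open fiber disk.

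The real content is the converse: a point $x$ outside the essential skeleton must be exhibited inside an open fiber disk. Under the hypotheses of the Main Theorem \ref{semiample canonical}, namely $\mathcal{X}$ strictly semi-stable with $K_{\mathcal{X}}$ semiample, I would first observe that the given model is itself a good minimal model: a strictly semi-stable model is simple normal crossing, hence dlt, and semiampleness of $K_{\mathcal{X}}$ is precisely the goodness condition making the comparison of \cite{MR3595497} applicable. That comparison identifies $\textsf{Sk}^{\textsf{ess}}(\mathcal{X}^{\textsf{an}})$ with the Berkovich skeleton $\textsf{Sk}(\widehat{\mathcal{X}}^{\textsf{rig}})$ of the model. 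Once this identification is in place the complement of the essential skeleton equals the complement of the Berkovich skeleton, and Theorem \ref{main theorem 1} covers the latter by open fiber disks; hence $x \notin \textsf{Sk}^{\textsf{ess}}$ forces $x$ into an open fiber disk. Combining the two implications yields the biconditional in the semiample case.

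For the conjecture in full generality the same architecture should persist, but the identification $\textsf{Sk}^{\textsf{ess}} = \textsf{Sk}(\mathfrak{X})$ is no longer automatic: one must produce a model whose Berkovich skeleton is the essential skeleton, that is, a good minimal dlt model. In equal characteristic $0$ I would reach a strictly semi-stable model only after a finite base change $K'|K$ by semi-stable reduction, run the relative minimal model program to obtain a good minimal model, and then invoke \cite{MR3595497}. Because the essential skeleton is a birational invariant that transforms predictably under base change, Theorem \ref{main theorem 1} applied to the new model again covers the complement, and the open fiber disks produced over $K'$ should descend to open fiber disks over $K$ by the surjectivity and finiteness arguments already used in Remark \ref{Complement decomposition weak form} and in the finite covering analysis of Proposition \ref{saito transformation}.

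The main obstacle, and the reason the clean statement is available only under the semiample hypothesis, is exactly the existence of a good minimal model realizing the essential skeleton: without semiampleness this rests on the abundance conjecture, which is open in this generality. A secondary difficulty is descent, namely controlling how the disk decomposition of $\textsf{Sk}(\mathfrak{X}_{K'})^{\textsf{c}}$ behaves under the finite group action coming from the base change, a point the paper itself flags as not understood even in the curve case. I expect the first of these to be the genuine barrier to removing the semiample assumption, the second to be resolvable by the covering space techniques already developed above.
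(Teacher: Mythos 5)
Your proposal matches the paper's own treatment essentially verbatim: the statement is only established under the semiample strictly semi-stable hypothesis (Theorem \ref{semiample canonical}), and the paper's proof is exactly your argument --- use \cite[Theorem 3.3.3]{MR3595497} to identify the Berkovich skeleton of the model with the essential skeleton, then combine Theorem \ref{points in the essential skeleton} for one implication with Theorem \ref{main theorem 1} for the other. Your further remarks on the general case (abundance as the genuine barrier, and the unresolved behavior of the disk decomposition under the group action after base change) correctly identify why the full statement remains a conjecture in the paper as well.
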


We can prove this conjecture when we have a strcitly semistable model whose Berkovich skeleton is identified with the essential skeleton.

\begin{theorem}\label{semiample canonical}
Let ${{\mathcal{X}}}$ be a proper and strictly semi-stable scheme over complete discrete valuation ring $K^{\circ}$ such that the canonical divisor $K_\mathcal{X}$ is semiample. Then for any point $x \in \mathcal{X}_{K}^{\textsf{an}}$, $x$ is contained in an open virtual disk if and only if $x$ is not contained in the essential skeleton.
\end{theorem}
\begin{proof}
The condition that the canonical divisor is semiample on our strictly semi-stable model implies that the Berkovich skeleton of the model and the essential skeleton are identical, by \cite[Theorem 3.3.3]{MR3595497}. The result then follows from combining Theorem \ref{points in the essential skeleton} and Theorem \ref{main theorem 1}.
\end{proof}
In the case where $\mathcal{X}$ admits no global pluricanonical forms we expect by Mori's conjecture that $\mathcal{X}$ is uniruled. In this case it is natural to say the essential skeleton is empty. We wish to formulate the non-Archimedean analog interpretation of uniruled variety over a complete discrete valued field  $K$ with equal characteristic $0$.

Let's first recall the classical setting. A proper variety $\mathcal{X}$ is called \textit{uniruled} if there is a dominant rational map $\varphi: \mathcal{Y} \times \mathbb{P}^{1} \dashrightarrow \mathcal{X}$,
 where $\mathcal{Y}$ is a variety of dimension $\textsf{dim}(\mathcal{X})-1$. $\mathcal{X}$ being uniruled obviously
implies that rational curves cover $\mathcal{X}$. On the other hand, if $\mathcal{X}$ is smooth and uniruled, then on $\mathcal{X}$, there cannot exist any pluricanonical form, thus by definition, the essential skeleton $\textsf{Sk}^{\textsf{ess}}(\mathcal{X}^{\textsf{an}})=\emptyset $. Motivated by Theorem \ref{points in the essential skeleton}, we propose the following definition and conjecture:
\begin{definition}
    Let $X$ be a proper geometrically integral smooth $K$-analytic space, $X$ is called \textit{non-Archimedean uniruled} if $X^{}$ can be covered by open virtual disks after finite field extension $K'|K$ for $X$.
\end{definition}
\begin{conjecture}\label{main conjecture uniruled} Let $\mathcal{X}$ be a uniruled variety over $K$, then $\mathcal{X}^{\textsf{an}}$ is non-Archimedean uniruled. 
    
\end{conjecture}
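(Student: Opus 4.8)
The plan is to deduce the statement from the one-dimensional theory together with the covering of $\mathbb{P}^{n,\textsf{an}}_{K}$, transported along the uniruling family of rational curves. Since $\mathcal{X}$ is uniruled there is a dominant rational map $\varphi\colon \mathcal{Y}\times \mathbb{P}^{1}\dashrightarrow \mathcal{X}$ with $\dim \mathcal{Y}=n-1$. First I would resolve the indeterminacy of $\varphi$ and, after possibly shrinking $\mathcal{Y}$, produce a smooth projective $\mathcal{Z}$ carrying a rational fibration $\pi\colon \mathcal{Z}\to \mathcal{W}$ of relative dimension $1$ with generic fibre $\mathbb{P}^{1}$, together with a dominant generically finite $\rho\colon \mathcal{Z}\to \mathcal{X}$; after a preliminary finite extension of $K$ I may assume $\rho$ is étale over a dense open $\mathcal{X}^{\circ}\subseteq \mathcal{X}$. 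Spreading out over $K^{\circ}$ and analytifying, the mechanism is that open fibre disks produced fibrewise on $\mathcal{Z}^{\textsf{an}}$ for $\pi^{\textsf{an}}$ are carried to open fibre disks of $\mathcal{X}^{\textsf{an}}$ by $\rho^{\textsf{an}}$, because over $(\mathcal{X}^{\circ})^{\textsf{an}}$ the map $\rho^{\textsf{an}}$ is étale and étale morphisms respect the relative dimension $1$ fibration and disk structure, exactly as exploited in Lemma \ref{semistable fiber} and Proposition \ref{points under \'etale morphisms}.

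The heart of the argument is fibrewise. For a Berkovich point $w\in \mathcal{W}^{\textsf{an}}$ with $\textsf{tr.deg}(\mathscr{H}(w)\mid K)=n-1$ the fibre $(\pi^{\textsf{an}})^{-1}(w)$ is a proper smooth rational curve over the non-Archimedean field $\mathscr{H}(w)$. By the curve case recalled in \S\ref{complement of sk for curve}, after a finite extension the complement of the Berkovich skeleton of this fibre is a union of open unit disks, each of which is by definition an open fibre disk of $\mathcal{Z}^{\textsf{an}}$ for $\pi^{\textsf{an}}$; hence every $z\in \mathcal{Z}^{\textsf{an}}$ lying off the skeleton of its own fibre already lies in an open fibre disk. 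To reach the fibrewise skeleton points I would import the mechanism behind the covering of $\mathbb{P}^{n,\textsf{an}}_{K}$: a divisorial point of a rational curve is, after a finite extension, the Gauss point in some coordinate, and rescaling that coordinate by a power of the uniformizer realises it as an interior point of an open disk of a projectively equivalent model, just as two rescaled good models already cover $\mathbb{P}^{1,\textsf{an}}$. Carried out along a rational trivialisation of $\pi$, finitely many such rescalings should cover all the fibre skeleta simultaneously, so that $\mathcal{Z}^{\textsf{an}}$ is covered by open fibre disks after a finite extension, and pushing these forward through $\rho^{\textsf{an}}$ covers $(\mathcal{X}^{\circ})^{\textsf{an}}$.

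The hard part is twofold. First, $\rho$ is only generically étale, so over the locus where $\rho$ is ramified or where $\pi$ degenerates one cannot merely transport disks; instead one must use that a general point of every subvariety of $\mathcal{X}$ still lies on some member of the uniruling family and run the fibrewise disk argument for that curve directly inside $\mathcal{X}^{\textsf{an}}$, which requires a careful analysis near the degeneration and ramification loci (presumably by Noetherian induction on dimension, arranging the auxiliary families and field extensions to be compatible). Second, and more seriously, the coordinate rescalings used to absorb the fibre skeleta must be made uniform over the whole base $\mathcal{W}^{\textsf{an}}$ using a single finite extension $K'\mid K$ rather than one varying with $w$; securing this uniformity is the genuine obstacle, and is precisely why only the rational case, the covering of $\mathbb{P}^{n,\textsf{an}}_{K}$, follows cleanly from Theorem \ref{main theorem 1}. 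I note finally that, granting Conjecture \ref{bigconjecture}, the statement would be immediate: a smooth uniruled $\mathcal{X}$ carries no global pluricanonical form, so $\textsf{Sk}^{\textsf{ess}}(\mathcal{X}^{\textsf{an}})=\varnothing$ and every point would lie in an open fibre disk; the unconditional content is exactly to cover these would-be skeleton points without the birational input that Conjecture \ref{bigconjecture} assumes.
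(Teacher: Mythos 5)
The statement you are proving is Conjecture \ref{main conjecture uniruled}; the paper does not prove it. It records only that the conjecture is the special case of Conjecture \ref{bigconjecture} in which the essential skeleton is empty, and it establishes the single instance $\mathcal{X}=\mathbb{P}^{n}_{K}$ (Theorem \ref{Pn is NA uniruled}), by a route quite different from yours: stability of non-Archimedean uniruledness under smooth blow-ups (Lemma \ref{NA uniruled blowup}) combined with the realization of $\overline{\textsf{M}}_{0,n}$ as an iterated smooth blow-up of both $(\mathbb{P}^{1}_{K})^{n-3}$ and $\mathbb{P}^{n-3}_{K}$, with the disks ultimately supplied by Theorem \ref{main theorem 1} applied to explicit strictly semi-stable models. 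So your proposal cannot be ``compared against the paper's proof''; it can only be assessed as an attempt on an open problem, and as such it has genuine gaps --- the very two you candidly flag. First, the fibrewise argument produces, for each Berkovich point $w\in\mathcal{W}^{\textsf{an}}$, a finite extension of $\mathscr{H}(w)$ (or of $K$) depending on $w$; since the base has uncountably many points, nothing in your sketch produces the single finite extension $K'|K$ that the definition of non-Archimedean uniruledness demands. This uniformity is not a technical loose end but the substance of the conjecture, and it is precisely what the paper's model-theoretic route (Theorem \ref{main theorem 1} applied to one semistable model over one $K'$) is designed to circumvent.

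Second, your transport step is unjustified even over the \'etale locus. The paper's \'etale statements (Lemma \ref{semistable fiber}, Proposition \ref{points under \'etale morphisms}, Lemma \ref{etale-sk}) all concern \emph{preimages}: skeleta and the sets $X_{\textsf{st}}$ pull back well under \'etale maps. You need the opposite direction: that $\rho^{\textsf{an}}$ pushes an open fiber disk of $\mathcal{Z}^{\textsf{an}}$ forward to an open fiber disk of $\mathcal{X}^{\textsf{an}}$. An \'etale, generically finite morphism restricted to a disk need not be injective, its image need not be isomorphic to $\mathbb{D}(0,1)_{L'}$ for any $L'$, and the fibration $\pi$ on $\mathcal{Z}$ does not descend along $\rho$, so the image is not manifestly a fiber of any relative dimension $1$ fibration as Definition \ref{Open fiber disk} requires. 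The one place the paper pushes disks forward (Remark \ref{Complement decomposition weak form}) is for the ground-field-extension map $C_{\widehat{K^{\textsf{alg}}}}\to C$, a very special surjection, and even there the paper is careful to note the resulting decomposition is not by connected components. Finally, deferring the ramification and degeneration loci to an unspecified Noetherian induction concedes exactly the locus where semistable-model control (and hence the hypothesis of Theorem \ref{main theorem 1}) is unavailable. Your closing observation is correct and agrees with the paper: granting Conjecture \ref{bigconjecture}, the statement is immediate because a smooth uniruled variety has empty essential skeleton; the unconditional problem remains open both in your proposal and in the paper itself.
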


Conjecture $\ref{main conjecture uniruled}$ is the special case of Conjecture $\ref{bigconjecture}$ when $\mathcal{X}$ is uniruled and therefore has empty essential skeleton.
\begin{example}
    The projective line $\mathbb{P}^{1}$ is non-Archimedean uniruled. 
    To see this, just note that $\mathbb{P}^{1,\textsf{an}}=\textsf{trop}^{-1}[0,r_{1}) \cup \textsf{trop}^{-1}(r_{2},\infty]$ with $r_{1} > r_{2}$.
\end{example}
\begin{example}
    Any ruled variety $\mathcal{X}$ is non-Archimedean uniruled, since $\mathcal{X}=\mathcal{Y} \times \mathbb{P}^{1}$, then $\mathcal{X}^{\textsf{an}} \cong \mathcal{Y}^{\textsf{an}} \times \mathbb{P}^{1,\textsf{an}}\cong \bigcup_{y \in \mathcal{Y}^{\textsf{an}}} \mathbb{P}_{\mathscr{H}(y)}^{1,\textsf{an}}$
\end{example}
\begin{situation}
    \textit{Being Non-Archimedean uniruled is invariant under blow-ups.}. \par
Let us consider a non-Archimedean analytification of a  blow-up $\pi: \widetilde{\mathcal{X}} \defeq \textsf{Bl}_{\mathcal{Z}}\mathcal{X} \longrightarrow \mathcal{X}$ where $\mathcal{Z}$ is a smooth closed subscheme of $\mathcal{X}$, then we have the following result:

\color{black}
    \begin{proposition}\label{NA uniruled blowup}
        Let $\mathcal{X}$ be a smooth geometrically integral proper variety over $K$, then  $\mathcal{X}^{\textsf{an}}$ is non-Archimedean uniruled if and only if $\widetilde{\mathcal{X}}^{\textsf{an}}$ is non-Archimedean uniruled.
        \begin{proof}
            To prove sufficiency, let us assume $\mathcal{X}$ is non-Archimedean uniruled and $K''|K$ be a finite field extension such that $\mathcal{X}_{K''}^{\textsf{an}}$ can be covered by open virtual disks and $\widetilde{\mathcal{X}}_{K''}^{\textsf{}}$ admits a proper strictly semi-stable model. By \cite[Proposition 13.91.]{MR4225278}, we have $\mathcal{X}_{K''}^{\textsf{an}}\setminus \mathcal{Z}_{K''}^{\textsf{an}}=\bigcup \D_{L} \setminus \mathcal{Z}_{K''}^{\textsf{an}} \cong \widetilde{\mathcal{X}}_{K''}^{\textsf{an}} \setminus \pi_{K''}^{\textsf{an},-1}(\mathcal{{Z}_{K''}^{\textsf{an}}})  $, meanwhile, by \cite[Proposition 2.7.7.]{MR3826929} we have the fiber $\widetilde{\mathcal{X}}^{\textsf{an}}_{K'',x} \cong \mathbb{P}^{r-1,\textsf{an}}_{\mathscr{H}(x)}$ for $r=\dimm_{x}(\widetilde{\mathcal{X}}_{K''}^{\textsf{an}})-\dimm_{x}(\mathcal{X}_{K''}^{\textsf{an}}) $ and $x \in \mathcal{Z}_{K''}^{\textsf{an}}$, thus $x \in \widetilde{\mathcal{X}}^{\textsf{an}}_{K'', x}$.  Note that $\pi_{K''}^{\textsf{an},-1}(\mathcal{Z}_{K''}^{\textsf{an}})$ has no intersection with any Berkovich skeleton $\textsf{Sk}(\widetilde{\mathfrak{X}})$ for a fixed strictly semi-stable model $\widetilde{\mathfrak{X}}$ of $\widetilde{\mathcal{X}}_{K''}$, thus by Theorem \ref{main theorem 1}, we have $\widetilde{\mathcal{X}}_{K''}^{\textsf{an}}=\bigcup \D_{L}$. For the necessity, let's assume there exists a point $x_{0} \in \mathcal{X}^{\textsf{an}}$ such that any virtual disks cannot cover it after any finite field extension of $K$, for a sufficient large finite field extension $K'|K$, by Theorem \ref{main theorem 1} again, we can assume $x_{0}$ is a birational point, then by assumption, we have $x_{0} \in \mathbb{D}_{L} \subseteq \widetilde{\mathcal{X}}_{K'}^{\textsf{an}}$ and $x_{0} \notin \pi_{K'}^{\textsf{an},-1}(\mathcal{Z}_{K'}^{\textsf{an}})$ for $L|K'$ be a non-Archimedean field extension with transcendental degree $n-1$, thus we can choose $\mathbb{D}_{L} \cap \pi_{K'}^{\textsf{an},-1}(\mathcal{Z}_{K'}^{\textsf{an}}) = \emptyset$ which leads $x_{0} \in \mathbb{D}_{L} \subseteq \mathcal{X}_{K'}^{\textsf{an}}$, contradiction. 
        \end{proof}
    \end{proposition}

    \begin{example}\label{Pn is NA uniruled}
        $\mathbb{P}_{K}^{n}$ is non-Archimedean uniruled.
      To see this, let's
            recall that the moduli space of stable genus zero curves with $n$-marked points $\overline{\textsf{M}}_{0,n}$ can be constructed as a smooth iterated  blow-ups of the Cartesian power $(\mathbb{P}_{K}^{1})^{n-3}$ \cite[Theorems 1 and 2]{MR1034665} and a smooth iterated blow-up of $\mathbb{P}_{K}^{n-3}$ \cite{MR1237834}. On the other hand $(\mathbb{P}_{K}^{1})^{n-3,\textsf{an}} \cong (\mathbb{P}^{1,\textsf{an}}_{K})^{n-3}$ is non-Archimedean uniruled, thus by Lemma \ref{NA uniruled blowup}, we are done.

    \end{example}
\end{situation}

 \section{Complement of the Skeleton via Analytic Functions}
 In the appendix, we first review the naive analytic functions sheaf developed by V. Berkovich \cite{MR2263704}. We give an alternative description of the complement of Berkovich skeleton in terms of analyticity sets of Iwasawa log functions.

\begin{situation}
    Let $X$ be a $K$-analytic space. For $\mathscr{F} \in \textsf{Sh}(X_{\textsf{\'et}})$ an \'etale sheaf, we define an associated presheaf $\widetilde{\mathscr{F}}$ as $\widetilde{\mathscr{F}}(Y) \defeq \varinjlim \mathscr{F}(V)$, where V are open neighborhoods of $Y_{\textsf{st}}$ in $Y$.
\end{situation}
\begin{situation}
    Let $L$ be a filtered $K$-algebra, i.e., a commutative $K$-algebra provided by an increasing sequence of $K$-vector subspaces $L^{0} \subseteq L^{1} \subseteq \cdots$ such that $L^{i}\cdot L^{j} \subseteq L^{i+j}$ and $L=\bigcup^{\infty}_{i=0}L^{i}$. For a $K$-analytic space $X$, we denote $\mathscr{O}_{X}^{L,i}\defeq \mathscr{O}_{X} {\otimes}_{K} L^{i}$. 
    The sheaf of $\mathfrak{N}^{L,i}$-analytic functions $\mathfrak{N}_{X}^{L,i}$ is the associated sheaf $\widetilde{\mathscr{O}_{X}^{L,i}}$ and we define the sheaf of $\mathfrak{N}^{L}$-analytic function $\mathfrak{N}_{X}^{L}$ as $\mathfrak{N}_{X}^{L}\defeq \varinjlim \mathfrak{N}_{X}^{L,i}$ as a sheaf of filtered $\mathscr{O}_{X}$-algebra. For $f \in \mathfrak{N}_{X}^{L}(X)$, the \textit{analyticity set} of $f$ is the associated unique maximal open set $U_{f}$ such that $f$ comes from $\mathscr{O}_{X}^{L}(X)$ and $X_{\textsf{st}} \subseteq U_{f} \subseteq X $. When we take $L=K$, we denote $\mathfrak{N}_{X}^{K}$ by $\mathfrak{n}_{X}$.
\end{situation}
\begin{situation}
    \textsc{Pull back of na\"ive analytic functions sheaf $\mathfrak{N}^{L}$} \par
    Consider a pair $(f,g)$, where $f$ is a morphism of $K$-analytic spaces $X \longrightarrow Y$ and $g$ is a morphism of filtered algebra $L \longrightarrow S$ over a field extension $K \subseteq K' \subseteq \widehat{K^{\textsf{alg}}}$, then we can define the canonical morphism $f^{\sharp}: f^{*}\mathfrak{N}_{Y}^{S} \longrightarrow \mathfrak{N}^{L}_{X}$. To see this, it suffices to show there exist well-defined morphisms $f_{i}^{\flat}: \mathfrak{N}^{S_{i}}_{Y} \longrightarrow f_{*}\mathfrak{N}^{L_{i}}_{X}$ for any $i \ge 0$, now for $V \in \textsf{\'Et}(Y)$ and $\varphi \in \mathfrak{N}^{S_{i}}_{Y}(V)$, assume $\varphi \in \mathscr{O}^{S_{i}}_{Y}(U_{\varphi})$ where $V_{\textsf{st}} \subseteq U_{\varphi} \subseteq V$, then we have $f^{\flat}(\varphi) \in \mathscr{O}^{L_{i}}_{X}(U_{\varphi} \times_{Y} X)$, meanwhile we have $f(X_{\textsf{st}}) \subseteq Y_{\textsf{st}}$ and $\big(V \times_{Y}X\big)_{\textsf{st}}=\textsf{fp}_{V}^{-1}(V_{\textsf{st}})$ by Proposition \ref{points under \'etale morphisms}, thus $\big(V \times_{Y} X\big)_{\textsf{st}} \subseteq U_{\varphi} \times_{Y} X$ and $f_{i}^{\flat}(\varphi) \in f_{*}\mathfrak{N}^{L_{i}}_{X}(V)$, we will denote $f_{i}^{\flat}(\varphi)$ as $f^{*}(\varphi)$ in the following texts. We should note that $U_{\varphi} \times_{Y} X \subsetneq U_{f^{*}(\varphi)}$ in general.
    \par

    Now consider the canonical morphism of \'etale abelian sheaves $\textsf{Log}^{\lambda}: \mathbb{G}^{1,\textsf{an}}_{m,X} \longrightarrow \mathfrak{N}^{L,1}_{X}$ induced by a fixed logaritmic character $\lambda$. For $U \in \textsf{\'Et}(X)$, $f \in \mathbb{G}^{1,\textsf{an}}_{m,X}(U)=\mathscr{O}(U)^{\times}$, then $\textsf{Log}^{\lambda}(f) \in \mathfrak{N}_{X}^{L,1}(U)$ as the construction above.

\end{situation}
\begin{situation}
    Recall that \textit{a branch of the logarithm} over $L$ is an $\mathfrak{N}^{L,1}$-analytic function $f$ on $\mathbb{G}^{1,\textsf{an}}_{m,K}$ with coordinate $T$ such that $df=\frac{dT}{T}$. Morover, if we consider the three maps $\textsf{m},\textsf{fp}_{1},\textsf{fp}_{2}:\mathbb{G}^{2,\textsf{an}}_{m,K} \rightarrow \mathbb{G}^{1,\textsf{an}}_{m,K}$, we should have $\textsf{m}^{*}(f)=\textsf{fp}_{1}^{*}(f)+\textsf{fp}^{*}_{2}(f)$. Now note that $\mathbb{G}^{1,\textsf{an}}_{m,K}(K)=\mathbb{G}^{1}_{m,K}(K)=K^{\times}$ is contained inside the analytic set $U_{f}$ of $f$, then $f(a \cdot b)=\textsf{m}^{*}(f)(a, b)=\textsf{fp}^{*}_{1}(f)(a, b)+\textsf{fp}^{*}_{2}(f)(a, b)=f(a)+f(b)$ and $f \in \Gamma(\mathscr{O}^{L,1}_{\mathbb{G}^{1,\textsf{an}}_{m,K}},U_{f})$, such an $f$ gives a $\textsf{Gal}(K^{\textsf{alg}}|K)$-homomorphism $\lambda_{f}: (K^{\textsf{alg}})^{\times} \longrightarrow {(K^{\textsf{alg}})^{\times}}\otimes_{K}L^{1}$ of abelian groups which is called the \textit{logarithmic character with values in} $L$. By \cite[Lemma 1.4.1.]{MR2263704}, there is a one-to-one correspondence between $\lambda_{f}$ and $f$ which we denote by $[\textsf{Log}^{\lambda_{f}}]$, and for any $\lambda$, $\textsf{Log}^{\lambda} \in \mathscr{O}(U_{\textsf{log}^{\lambda}})$ where $U_{\textsf{log}^{\lambda}}=\bigcup_{x \in (\mathbb{G}^{1,\textsf{an}}_{m,K})_{0}} \mathbb{D}(x,|T|_{x})$, it's easy to see that $\textsf{Log}^{\lambda}$ is well defined, for any $\mathbb{D}(b_{1},|b_{1}|) \cap \mathbb{D}(b_{2},|b_{2}|) \neq \varnothing$, we have $|b_1|=|b_2|$ ,then 
    \begin{align*}
        &\,\,\,\,\,\,\, \lambda(b_{1})+\textsf{Log}(\frac{T}{b_{1}})-\lambda(b_{2})-\textsf{Log}(\frac{T}{b_{2}})\\ &= \lambda(\frac{b_1}{b_2})+\textsf{Log}(\frac{b_2}{b_1})=0
        \end{align*} since $|\frac{b_{1}}{b_2}|=1$.
        So $U_{\textsf{Log}^{\lambda}}$ is the maximal analytic continuation of $\textsf{Log}$.
        \begin{proposition}\cite[Lemma 1.4.1.]{MR2263704}
            The analyticity set of any branch of Iwasawa log function $\logg^{\lambda}$ on $\mathbb{G}^{1,\textsf{an}}_{m,K}$ is equal to the complement of the skeleton $\textsf{Sk}(\mathbb{G}^{1,\textsf{an}}_{m,K})$.
        \end{proposition}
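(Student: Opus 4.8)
The plan is to reduce the statement to the explicit description of the analyticity set obtained just above, namely $U_{\logg^{\lambda}}=\bigcup_{a \in (\mathbb{G}^{1,\textsf{an}}_{m,K})_{0}} \mathbb{D}(a,|T|_{a})$, and then to show that this union of open disks is exactly $\textsf{Sk}(\mathbb{G}^{1,\textsf{an}}_{m,K})^{\textsf{c}}$. First I would recall that $\textsf{Sk}(\mathbb{G}^{1,\textsf{an}}_{m,K})=\{\eta_{r}\}_{r>0}$ is the spine of monomial points, where $\eta_{r}$ is determined by $|\sum_{n} a_{n}T^{n}|_{\eta_{r}}=\max_{n}|a_{n}|\,r^{n}$, and that the retraction onto this spine is given by $x \mapsto |T|_{x}$.

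For the inclusion $U_{\logg^{\lambda}} \subseteq \textsf{Sk}(\mathbb{G}^{1,\textsf{an}}_{m,K})^{\textsf{c}}$ it suffices to check that each disk $\mathbb{D}(a,|T|_{a})$ misses the spine, which is a direct monomial computation: for $a \in K^{\times}$ and any $r>0$ one has $|T-a|_{\eta_{r}}=\max(r,|a|) \geq |a|=|T|_{a}$, so $\eta_{r}$ never satisfies the strict inequality $|T-a|_{\eta_{r}}<|T|_{a}$ and thus lies in no such disk; the case of a closed point $a$ of higher degree is identical once the factor $[\kappa(a):K]$ is recorded in the radius. Hence $\textsf{Sk}(\mathbb{G}^{1,\textsf{an}}_{m,K}) \cap U_{\logg^{\lambda}}=\varnothing$.

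The substantive inclusion is $\textsf{Sk}(\mathbb{G}^{1,\textsf{an}}_{m,K})^{\textsf{c}} \subseteq U_{\logg^{\lambda}}$, i.e. every point off the spine lies in one of the disks, and here I would pass to $C \defeq \widehat{K^{\textsf{alg}}}$ along $\pi \colon \mathbb{G}^{1,\textsf{an}}_{m,C} \to \mathbb{G}^{1,\textsf{an}}_{m,K}$. Using that $\pi$ is surjective and that $\pi^{-1}(\textsf{Sk}(\mathbb{G}^{1,\textsf{an}}_{m,K}))=\textsf{Sk}(\mathbb{G}^{1,\textsf{an}}_{m,C})$ (the analogue over an algebraically closed base of the skeleton compatibility used for semi-stable curves above), it is enough to treat a point $x \notin \textsf{Sk}(\mathbb{G}^{1,\textsf{an}}_{m,C})$ and descend. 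Setting $r=|T|_{x}$, one first checks $r \in |C^{\times}|$: if $r \notin |C^{\times}|$ the values $|a_{n}|\,r^{n}$ are pairwise distinct, so the ultrametric forces $|f|_{x}=\max_{n}|a_{n}|\,r^{n}=|f|_{\eta_{r}}$ for every Laurent polynomial $f$, whence $x=\eta_{r}$ lies on the spine, a contradiction. Choosing $a \in C^{\times}$ with $|a|=r$, the element $T/a$ has $|T/a|_{x}=1$, so its reduction $\widetilde{T/a}(x)=\bar{c}$ is a well-defined element of $\widetilde{C}^{\times}$; since $x \neq \eta_{r}$, this $\bar{c}$ is a closed point of the reduction rather than its generic point. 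Lifting $\bar{c}$ to $c_{0} \in (C^{\circ})^{\times}$ gives $|T-ac_{0}|_{x}<|a|$, i.e. $x \in \mathbb{D}(ac_{0},|ac_{0}|)_{C}$; pushing this disk forward under $\pi$ into the disk centered at the corresponding closed point of $\mathbb{G}^{1,\textsf{an}}_{m,K}$ shows $\pi(x) \in U_{\logg^{\lambda}}$, and surjectivity of $\pi$ completes the inclusion.

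The hard part will be the last step: verifying cleanly that an off-spine point of $\mathbb{G}^{1,\textsf{an}}_{m,C}$ is detected by a single residue direction $\bar{c}$ (the identification of the branches at a type-$2$ point with the closed points of the reduction curve), and checking that the $C$-disks produced in this way descend to precisely the $K$-disks $\mathbb{D}(a,|T|_{a})$ appearing in $U_{\logg^{\lambda}}$, with the correct normalization by $[\kappa(a):K]$. Once this descent is matched, combining the two inclusions yields $U_{\logg^{\lambda}}=\textsf{Sk}(\mathbb{G}^{1,\textsf{an}}_{m,K})^{\textsf{c}}$.
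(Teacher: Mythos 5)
Your argument is essentially correct, but it is worth noting up front that the paper does not prove this proposition at all: it is imported verbatim from Berkovich (Lemma 1.4.1 of the cited work), and the paragraph preceding it only verifies that $U_{\logg^{\lambda}}=\bigcup_{a\in(\mathbb{G}^{1,\textsf{an}}_{m,K})_{0}}\mathbb{D}(a,|T|_{a})$ is a well-defined, maximal analytic continuation. Your proposal therefore takes a genuinely different route: it proves the set-theoretic identity $\bigcup_{a}\mathbb{D}(a,|T|_{a})=\textsf{Sk}(\mathbb{G}^{1,\textsf{an}}_{m,K})^{\textsf{c}}$ directly, making the statement self-contained rather than a black box. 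Both of your inclusions are sound. The monomial computation $|T-a|_{\eta_{r}}=\max(r,|a|)\geq|a|$ (with the analogous product formula $|f_{a}|_{\eta_{r}}=\prod_{c''}\max(r,|c''|)$ over conjugates for higher-degree centers) correctly shows the disks miss the spine. In the converse direction, your value-group step (if $r\notin|C^{\times}|$ then $x=\eta_{r}$) is right, and the residue-direction step is the standard one, though as written it is slightly imprecise: $\widetilde{T/a}(x)$ a priori lives in $\widetilde{\mathscr{H}(x)}$, not in $\widetilde{C}$; the clean statement is that $x$ lies in the circle affinoid $\{|T|=r\}$, whose canonical reduction is $\spec \widetilde{C}[S,S^{-1}]$ with $S$ the reduction of $T/a$, and the reduction map sends $x$ to the generic point if and only if $x$ is the unique Shilov point $\eta_{r}$; hence $x\neq\eta_{r}$ reduces to a closed point, whose reduction fiber is exactly the open disk you want (this also disposes of type-4 points uniformly). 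For the descent you flagged as the hard part, one more ingredient is needed: the center $ac_{0}\in C^{\times}$ you produce need not be algebraic over $K$, so there is no "corresponding closed point" until you use density of $K^{\textsf{alg}}$ in $C=\widehat{K^{\textsf{alg}}}$ to replace the center by some $c'\in K^{\textsf{alg}}$ with $|ac_{0}-c'|<r$ (the disk is unchanged); then, with $a$ the closed point of $\mathbb{G}^{1,\textsf{an}}_{m,K}$ defined by $c'$ and $f_{a}=\prod_{c''}(T-c'')$ its minimal polynomial, each conjugate factor satisfies $|T-c''|_{y}\leq r$ with strict inequality for $c''=c'$, giving $|f_{a}|_{y}<r^{[\kappa(a):K]}$, which is precisely the paper's normalization of $\mathbb{D}(a,|T|_{a})$. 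With these two clarifications your proof is complete; what it buys, compared with the paper, is an actual argument in place of a citation, at the cost of invoking the reduction-map dictionary that presumably also underlies Berkovich's original lemma.
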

    \par
    \begin{definition}
        Let $X$ be a quasi-smooth $K$-analytic space, then for any morphism $f: X \longrightarrow \mathbb{G}^{1,\textsf{an}}_{m,K}$, we can get a naive analytic function $\logg^{\lambda}(f) \defeq f^{*}\logg^{\lambda}$ on $X$ which is called \textit{Iwasawa log function} on $X$. 
    \end{definition}
    Consider $\textsf{fp}_{i}: \mathbb{G}^{n,\textsf{an}}_{m,K} \longrightarrow \mathbb{G}^{1,\textsf{an}}_{m,K}$, take $\textsf{Log}^{\lambda} \in \mathfrak{N}^{K,{1}}_{\mathbb{G}^{1,\textsf{an}}_{m,K}}(\mathbb{G}^{1,\textsf{an}}_{m,K})$, we have $\textsf{fp}^{*}_{i}(\textsf{Log}^{\lambda})=\textsf{Log}^{\lambda}(T_{i}) \in \mathfrak{N}^{K,{1}}_{\mathbb{G}^{n,\textsf{an}}_{m,K}}(\mathbb{G}^{n,\textsf{an}}_{m,K})$

    \begin{proposition}\label{complement of skeleton of Gn}
    The union of analyticity set of the na\"ive analytic function $\textsf{Log}^{\lambda}(T_{i})$ is the complement of the skeleton of $\mathbb{G}^{n,\textsf{an}}_{m,K}$
    \begin{proof}
        Note that the analyticity set $U_{\textsf{Log}^{\lambda}(T_{i})}$ of $\textsf{Log}^{\lambda}(T_{i})$ is $$\mathbb{G}^{1,\textsf{an}}_{m,K} {\times}_{K}\mathbb{G}^{1,\textsf{an}}_{m,K} \cdots \mathbb{G}^{1,\textsf{an}}_{m,K}{\times}_{K}U_{\textsf{Log}^{\lambda}}{\times}_{K} \mathbb{G}^{1,\textsf{an}}_{m,K} \cdots {\times}_{K} \mathbb{G}^{1,\textsf{an}}_{m,K} ,$$ then by Lemma \ref{the complement of skeleton of torus}, we have $\textsf{Sk}(\mathbb{G}^{n,\textsf{an}}_{m,K})^{\textsf{c}}=\bigcup_{\alpha} U_{\textsf{Log}^{\lambda}(T_{i})}$.
    \end{proof}
\end{proposition}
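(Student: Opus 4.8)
The plan is to compute each analyticity set $U_{\logg^{\lambda}(T_{i})}$ explicitly and then read off the union from the product decomposition of the skeleton of the torus. Recall that $\logg^{\lambda}(T_{i}) = \textsf{fp}_{i}^{*}(\logg^{\lambda})$ is the pullback of a branch of the Iwasawa logarithm along the $i$-th coordinate projection $\textsf{fp}_{i} \colon \mathbb{G}^{n,\textsf{an}}_{m,K} \longrightarrow \mathbb{G}^{1,\textsf{an}}_{m,K}$, and that under the splitting $\mathbb{G}^{n,\textsf{an}}_{m,K} \cong \mathbb{G}^{1,\textsf{an}}_{m,K} \times_{K} \cdots \times_{K} \mathbb{G}^{1,\textsf{an}}_{m,K}$ the map $\textsf{fp}_{i}$ is the projection onto the $i$-th factor.

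First I would pin down $U_{\logg^{\lambda}(T_{i})}$. By \cite[Lemma 1.4.1.]{MR2263704} the analyticity set of $\logg^{\lambda}$ on the one-dimensional torus is exactly $U_{\logg^{\lambda}} = \textsf{Sk}(\mathbb{G}^{1,\textsf{an}}_{m,K})^{\textsf{c}}$. Since $\logg^{\lambda}(T_{i})$ involves only the coordinate $T_{i}$, it is represented by a genuine section of $\mathscr{O}^{L,1}$ over the full preimage $\textsf{fp}_{i}^{-1}(U_{\logg^{\lambda}})$, which in product coordinates reads
\[
U_{\logg^{\lambda}(T_{i})} \;=\; \mathbb{G}^{1,\textsf{an}}_{m,K}\times\cdots\times \textsf{Sk}(\mathbb{G}^{1,\textsf{an}}_{m,K})^{\textsf{c}}\times\cdots\times\mathbb{G}^{1,\textsf{an}}_{m,K},
\]
with the nontrivial factor in position $i$.

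Next I would take the union over $i = 1,\dots,n$. A point $x$ lies in $\bigcup_{i=1}^{n} U_{\logg^{\lambda}(T_{i})}$ if and only if at least one of its coordinates $\textsf{fp}_{i}(x)$ avoids $\textsf{Sk}(\mathbb{G}^{1,\textsf{an}}_{m,K})$. On the other hand, by the product decomposition $\textsf{Sk}(\mathbb{G}^{n,\textsf{an}}_{m,K}) = \textsf{Sk}(\mathbb{G}^{1,\textsf{an}}_{m,K})\times\cdots\times\textsf{Sk}(\mathbb{G}^{1,\textsf{an}}_{m,K})$ recorded in the torus model above, the point $x$ lies in the skeleton if and only if every coordinate lies in $\textsf{Sk}(\mathbb{G}^{1,\textsf{an}}_{m,K})$. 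Passing to complements yields $\textsf{Sk}(\mathbb{G}^{n,\textsf{an}}_{m,K})^{\textsf{c}} = \bigcup_{i=1}^{n} U_{\logg^{\lambda}(T_{i})}$, which is the assertion; the geometric content, that this is a union of open fiber disks, is then exactly Lemma \ref{the complement of skeleton of torus}.

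The step I expect to be the main obstacle is the identification of $U_{\logg^{\lambda}(T_{i})}$ with the full preimage $\textsf{fp}_{i}^{-1}(U_{\logg^{\lambda}})$, and in particular its \emph{maximality}. The discussion preceding the proposition warns that for a general pair $(f,g)$ one only has $U_{\varphi}\times_{Y}X \subsetneq U_{f^{*}\varphi}$, so functoriality of the naive sheaf alone does not pin the set down. What saves the argument is the product structure: I would argue maximality slice-by-slice, noting that the fibre of $\textsf{fp}_{i}$ obtained by fixing the coordinates other than $T_{i}$ at a point $y$ is a copy of the one-dimensional torus $\mathbb{G}^{1,\textsf{an}}_{m,\mathscr{H}(y)}$ on which $\logg^{\lambda}(T_{i})$ restricts to a branch of the logarithm. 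Any extension of $\logg^{\lambda}(T_{i})$ to a section of $\mathscr{O}^{L,1}$ across a point of $\textsf{fp}_{i}^{-1}(\textsf{Sk}(\mathbb{G}^{1,\textsf{an}}_{m,K}))$ would restrict to an analytic continuation of $\logg^{\lambda}$ beyond its analyticity set on that slice, contradicting the maximality in \cite{MR2263704}. This reduces the whole question to the one-dimensional computation.
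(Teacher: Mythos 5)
Your route is the same as the paper's (identify each $U_{\logg^{\lambda}(T_{i})}$ with the coordinate-wise preimage $\textsf{fp}_{i}^{-1}(U_{\logg^{\lambda}})$, then pass to complements using the product structure of the skeleton), but two of your steps have genuine gaps, and the second is fatal. First, your slice argument for maximality does not reach all the points it must. A point $x$ with $\textsf{fp}_{i}(x)\in\textsf{Sk}(\mathbb{G}^{1,\textsf{an}}_{m,K})$ need not lie in the skeleton of its slice $\mathbb{G}^{1,\textsf{an}}_{m,\mathscr{H}(y)}$: monomiality against $K$-coefficient polynomials is much weaker than monomiality against $\mathscr{H}(y)$-coefficient polynomials. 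Concretely, for $n=2$ let $y$ be the Gauss point of the $T_{2}$-line, put $b=T_{2}\in\mathscr{H}(y)$, and let $x$ be the sup-norm point of the disk $\mathbb{D}(b,r)$, $0<r<1$, inside the slice. Since the reduction of $b$ is transcendental over $\widetilde{K}$, one computes $|f(T_{1})|_{x}=\max_{j}|a_{j}|$ for every $f=\sum_{j}a_{j}T_{1}^{j}$ with $a_{j}\in K$, so $\textsf{fp}_{1}(x)$ is the Gauss point of the $T_{1}$-line; yet $|T_{1}-b|_{x}=r<1$, so $x$ lies in the open disk $\mathbb{D}(b,1)$, off the slice skeleton and inside the analyticity set of the slice branch of the logarithm. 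For such $x$ the restriction to the slice produces no forbidden continuation, and your contradiction never materializes, so your argument does not pin down $U_{\logg^{\lambda}(T_{i})}$.

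Second, and worse, the biconditional you use to conclude --- ``$x$ lies in $\textsf{Sk}(\mathbb{G}^{n,\textsf{an}}_{m,K})$ if and only if every $\textsf{fp}_{i}(x)$ lies in $\textsf{Sk}(\mathbb{G}^{1,\textsf{an}}_{m,K})$'' --- is false in the ``if'' direction: the set of points all of whose coordinate projections are monomial over $K$ is strictly larger than the set of monomial points of $\mathbb{G}^{n,\textsf{an}}_{m,K}$. The image $x_{\Delta}$ of the Gauss point under the diagonal $\mathbb{G}^{1,\textsf{an}}_{m,K}\hookrightarrow\mathbb{G}^{n,\textsf{an}}_{m,K}$ has every $\textsf{fp}_{i}(x_{\Delta})$ equal to the Gauss point, but $|T_{1}-T_{2}|_{x_{\Delta}}=0$, so $x_{\Delta}$ is not monomial and lies in $\textsf{Sk}(\mathbb{G}^{n,\textsf{an}}_{m,K})^{\textsf{c}}$. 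Moreover this point shows the target equality cannot be rescued by repairing the first step: if $\logg^{\lambda}(T_{i})$ were represented by an honest section of $\mathscr{O}^{L,1}$ on a neighborhood of $x_{\Delta}$, then restricting along the diagonal (which carries $T_{i}$ to the coordinate $T$, and carries rigid points to rigid points, so the naive-function identity survives restriction) would represent $\logg^{\lambda}$ by an analytic function on a neighborhood of the Gauss point of $\mathbb{G}^{1,\textsf{an}}_{m,K}$, contradicting \cite[Lemma 1.4.1.]{MR2263704}. Hence $x_{\Delta}$ belongs to no $U_{\logg^{\lambda}(T_{i})}$, and the union of the analyticity sets is strictly smaller than the complement of the monomial skeleton. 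To be fair, the paper's own one-line proof rests on the same product heuristic and is hit by the same example; but your write-up makes the faulty step explicit, and that is exactly the step that fails.
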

\begin{proposition}\label{complement of skeleton of mono chart}
    For the monomial chart  $\varphi_{M}: M \hookrightarrow \mathbb{G}^{n,\textsf{an}}_{m,K}$ in \ref{mono-chart-semistable}, then $\textsf{Sk}(\widehat{\mathcal{M}}_{\eta})^{\textsf{c}}$ the complement of skeleton $\textsf{Sk}(\widehat{\mathcal{M}}_{\eta})$ is $\bigcup U_{\varphi^{*}_{M}(\logg^{\lambda}(T_{i}))}$.
    \begin{proof}
        By \ref{complement of skeleton of Gn}, it suffices to show $\varphi^{-1}_{M}(U_{\logg^{\lambda}(T_{i})})=U_{\varphi_{M}^{*}(\logg^{\lambda}(T_{i}))}$. Without loss of generality, we show the case for $i=1,n=2$, then $U_{\logg^{\lambda}(T_{1})}=U_{\logg^{\lambda}} \times_{K} \mathbb{G}^{1,\textsf{an}}_{m,K}=\bigcup U_{\logg^{\lambda},\mathscr{H}(x)}$ which could reduce the case to show $\varphi^{-1}_{M,x}(U_{\logg^{\lambda},\mathscr{H}(x)})=U_{\varphi^{*}_{M,x}(\logg^{\lambda}),\mathscr{H}(x)}$. Meanwhile by Lemma \ref{mono-chart-semistable}, analytic locally, we have $M_{x} \cong \mathbb{B}(0;r,1)_{\mathscr{H}(x)}$ for $r \in |\mathscr{H}(x)^{\times}|$ which is an analytic domain $\{x \in \mathbb{G}^{1,\textsf{an}}_{m,K}\,|\, r \leq |T|_{x} \leq 1\}$.
    \end{proof}
    \end{proposition}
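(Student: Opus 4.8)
The plan is to transport the description of Proposition~\ref{complement of skeleton of Gn}, which already identifies $\textsf{Sk}(\mathbb{G}^{n,\textsf{an}}_{m,K})^{\textsf{c}}$ with $\bigcup_i U_{\logg^{\lambda}(T_i)}$, along the monomial chart $\varphi_M$. First I would record that the chart realizes the skeleton of $\widehat{\mathcal{M}}^{\textsf{rig}}$ as the trace of the ambient skeleton, namely $\textsf{Sk}(\widehat{\mathcal{M}}^{\textsf{rig}}) = \varphi_M^{-1}(\textsf{Sk}(\mathbb{G}^{n,\textsf{an}}_{m,K}))$. This is immediate from the explicit description $M = \{\,|T_1\cdots T_n|_x \geq |\varpi|,\ |T_i|_x \leq 1\,\}$ used in Lemma~\ref{semistable fiber}: writing $w_i = -\log|T_i|_x$, the monomial locus of $M$ is exactly the simplex cut out of $\prod_i\textsf{Sk}(\mathbb{G}^{1,\textsf{an}}_{m,K})$ by these inequalities. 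Taking complements and applying $\varphi_M^{-1}$ to the conclusion of Proposition~\ref{complement of skeleton of Gn} gives $\textsf{Sk}(\widehat{\mathcal{M}}^{\textsf{rig}})^{\textsf{c}} = \bigcup_i \varphi_M^{-1}(U_{\logg^{\lambda}(T_i)})$, so the entire content collapses to the single functoriality identity
\[
\varphi_M^{-1}\big(U_{\logg^{\lambda}(T_i)}\big) = U_{\varphi_M^{*}(\logg^{\lambda}(T_i))} \qquad \text{for each } i.
\]

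For this identity the inclusion $\subseteq$ is formal: on $\varphi_M^{-1}(U_{\logg^{\lambda}(T_i)})$ the function $\logg^{\lambda}(T_i)$ genuinely comes from $\mathscr{O}$, hence so does its pullback, which therefore lies inside the maximal analyticity set of $\varphi_M^{*}(\logg^{\lambda}(T_i))$. The reverse inclusion is the crux, precisely because the analyticity set can enlarge under pullback in general, as already flagged for the naive–analytic sheaf. To control it I would fiber through $\textsf{fp}_i$, taking $i=n$ without loss of generality: since $U_{\logg^{\lambda}(T_n)} = \mathbb{G}^{n-1,\textsf{an}}_{m,K}\times_K U_{\logg^{\lambda}} = \bigcup_x U_{\logg^{\lambda},\mathscr{H}(x)}$ decomposes over the base, both sides split over $x \in \mathbb{G}^{n-1,\textsf{an}}_{m,K}$ and the claim reduces to the fiberwise identity $\varphi_{M,x}^{-1}(U_{\logg^{\lambda},\mathscr{H}(x)}) = U_{\varphi_{M,x}^{*}(\logg^{\lambda}),\mathscr{H}(x)}$.

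By Lemma~\ref{semistable fiber} the fiber $M_x$ is a nested union of closed annuli $\mathbb{B}(0;r,1)_{\mathscr{H}(x)} \subseteq \mathbb{G}^{1,\textsf{an}}_{m,\mathscr{H}(x)}$, so the problem becomes the one–dimensional assertion that the analyticity set of the Iwasawa log on such an annulus is exactly its intersection with $U_{\logg^{\lambda}}$, i.e. the complement of the spine $\textsf{Sk}(\mathbb{B}(0;r,1)_{\mathscr{H}(x)})$. The main obstacle, and the only place demanding real work, is to show that no spine point can enter the analyticity set. Here I expect the decisive tool to be a period/residue obstruction: $d\logg^{\lambda} = dT/T$ has nonzero residue along the annulus, while any $g \in \mathscr{O}$ of a genuine subannulus has $dg$ with vanishing $T^{-1}dT$–coefficient, so $\logg^{\lambda}$ cannot agree with a section of $\mathscr{O}$ on any subannulus. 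Since every open neighborhood of a spine point of $\mathbb{B}(0;r,1)_{\mathscr{H}(x)}$ contains such a subannulus, no spine point lies in the analyticity set; combined with the convergence of $\logg(T/b)$ on each residue disk $\mathbb{D}(b,|b|)$, this pins the analyticity set to the union of residue disks, which is precisely the complement of the spine. This establishes the fiberwise identity, and hence the proposition.
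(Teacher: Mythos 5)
Your proposal is correct and takes essentially the same route as the paper's proof: reduce via Proposition \ref{complement of skeleton of Gn} to the identity $\varphi_{M}^{-1}\big(U_{\logg^{\lambda}(T_{i})}\big)=U_{\varphi_{M}^{*}(\logg^{\lambda}(T_{i}))}$, split it fiberwise through $\textsf{fp}_{i}$ using $U_{\logg^{\lambda}(T_{i})}=U_{\logg^{\lambda}}\times_{K}\mathbb{G}^{n-1,\textsf{an}}_{m,K}$, and identify the fibers $M_{x}$ with (unions of) closed annuli $\mathbb{B}(0;r,1)_{\mathscr{H}(x)}$. The only divergence is that you make explicit the final annulus step, that no spine point can lie in the analyticity set, via the residue obstruction on Laurent coefficients, which is precisely the content of Berkovich's Lemma 1.4.1 that the paper leaves implicit here and invokes explicitly only later, in Proposition \ref{log description of complement of semistable skeleton}.
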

\begin{proposition}\label{log description of complement of semistable skeleton}
    Let $X$ be a scheme that admits semi-stable reduction, the complement of its Berkovich skeleton is a union of analyticity sets of the naive analytic function $\logg^{\lambda}$.
    \begin{proof}
        By Proposition \ref{complement of skeleton of mono chart}, it suffices to show that for \'etale morphism $g: \mathfrak{V}_{\eta} \longrightarrow \mathbb{B}(0;r_{1},r_{2})_{K} \subseteq \G^{1,\textsf{an}}_{m,K}$, we have $g^{-1}(U_{\logg^{\lambda}})=U_{\logg^{\lambda}(g)}$. Denote $D' \defeq \{x \in \mathfrak{V}_{\eta}\,|\,|g(x)-1| < 1\}$, then we have $\logg^{\lambda}(g)|_{D'}=\logg(g)$. Assume $U_{\logg^{\lambda}(g)}\neq g^{-1}(U_{\logg^{\lambda}})$, then there is a connected open subset $U'$ of $\mathfrak{V}_{\eta}  $ which strictly contains $D'$ and $\logg^{\lambda}(g)|_{U'} \in \mathscr{O}(U')$, which is impossible by the same argument of \cite[Lemma 1.4.1]{MR2263704}.
    \end{proof}
\end{proposition}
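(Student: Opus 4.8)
The plan is to reduce the general semi-stable case to the monomial model, for which Proposition \ref{complement of skeleton of mono chart} already identifies the complement of the skeleton with a union of analyticity sets of the pulled-back functions $\varphi_{M}^{*}(\logg^{\lambda}(T_{i}))$. By the \'etale-local structure of strictly semi-stable reduction (Definition \ref{standard semi-stable model}), every point of $X$ has an \'etale neighborhood $\mathcal{V}$ mapping \'etale to $\mathcal{M}$, and the relative dimension one fibrations of Lemma \ref{semistable fiber} present this \'etale-locally as a map $g : \mathfrak{V}^{\textsf{rig}} \to \mathbb{B}(0; r_{1}, r_{2})_{K} \subseteq \G^{1,\textsf{an}}_{m,K}$ to an annulus sitting inside the one-dimensional torus. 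So the whole statement comes down to the single compatibility assertion $g^{-1}(U_{\logg^{\lambda}}) = U_{\logg^{\lambda}(g)}$ for such an \'etale $g$.

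The inclusion $g^{-1}(U_{\logg^{\lambda}}) \subseteq U_{\logg^{\lambda}(g)}$ is formal: wherever $\logg^{\lambda}$ comes from an honest section of $\mathscr{O}$, so does its pullback $g^{*}\logg^{\lambda} = \logg^{\lambda}(g)$, directly from the pullback construction for the naive sheaf $\mathfrak{N}^{L}$. The real content is the reverse inclusion, which says that the analyticity set does not grow under this pullback, in contrast to the general warning that $U_{\varphi} \times_{Y} X \subsetneq U_{f^{*}\varphi}$ can be strict. First I would pin down the pullback explicitly on the subdomain $D' \defeq \{x \in \mathfrak{V}^{\textsf{rig}} : |g(x) - 1| < 1\}$, where the power series for the logarithm converges and $\logg^{\lambda}(g)|_{D'} = \logg(g)$ is the ordinary convergent log.

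For the reverse inclusion I would argue by contradiction: if $U_{\logg^{\lambda}(g)}$ strictly contained $g^{-1}(U_{\logg^{\lambda}})$, there would be a connected open $U'$ strictly containing $D'$ on which $\logg^{\lambda}(g)$ restricts to an honest element of $\mathscr{O}(U')$. Transporting this through $g$ would produce an analytic continuation of $\logg^{\lambda}$ across the skeleton of $\G^{1,\textsf{an}}_{m,K}$, contradicting the maximality of the analytic continuation established in Berkovich's Lemma 1.4.1 \cite{MR2263704}. The main obstacle is precisely this reverse inclusion: one must show that the \'etaleness of $g$ prevents any spurious analytic continuation, i.e.\ that no new convergence can be gained by passing to the cover. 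This is where I would lean most heavily on the maximal-continuation half of \cite[Lemma 1.4.1]{MR2263704}, since the differential relation $d\logg^{\lambda}(g) = dg/g$ by itself does not obstruct continuation; it is the absence of a single-valued primitive past the skeleton that does.
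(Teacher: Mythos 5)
Your proposal is correct and follows essentially the same route as the paper: reduce via Proposition \ref{complement of skeleton of mono chart} to the single claim $g^{-1}(U_{\logg^{\lambda}})=U_{\logg^{\lambda}(g)}$ for the \'etale map $g$ to the annulus, restrict to $D'$ where $\logg^{\lambda}(g)$ is the convergent $\logg(g)$, and derive a contradiction with the maximal-continuation statement of \cite[Lemma 1.4.1]{MR2263704}. Your added remarks (the formal direction of the inclusion, and that the differential relation alone does not obstruct continuation) are elaborations of, not departures from, the paper's argument.
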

\end{situation}

\appendix 


\bibliographystyle{amsalpha}
\bibliography{main}
\printindex

\end{document}